\patchcmd{\@maketitle}{\huge}{\Large}{}{}
\newcommand{\cref}[1]{\autoref{#1}}
\definecolor{RoyalBlue}{RGB}{0,32,96}
\definecolor{ForestGreen}{RGB}{34,139,34}
\definecolor{BrickRed}{RGB}{203, 65, 84}
\newcommand{\MeritFun}{f}
\newcommand{\ext}[1]{#1}
\newcommand{\regparam}{\alpha}
\newcommand{\DouadyEarle}{E}
\newcommand{\Shift}{\sigma}
\DeclareMathAlphabet{\mathcal}{OMS}{cmsy}{m}{n}
\newcommand{\OpenBall}[2]{B(#1;#2)}
\newcommand{\ClosedBall}[2]{\bar B(#1;#2)}
\newcommand{\AmbDim}{d} % dimension of ambient space
\newcommand{\Sphere}{\mathbb{S}}
\newcommand{\Disk}{\mathbb{B}}
\newcommand{\UnitTangent}{\operatorname{UT}}
\newcommand{\TangentBundle}{\operatorname{T}}
\DeclareDocumentCommand{\Hess}{ O{} }{\operatorname{Hess}_{#1}}
\newcommand{\Var}{\operatorname{Var}}
\newcommand{\qand}{\quad \text{and} \quad}
\DeclareMathOperator*{\esssup}{ess\,sup}
\newcommand{\converges}[1]{\stackrel{#1}{\longrightarrow}}
\newcommand{\transp}{^{\mathsf{T\!}}}
\newcommand{\dual}{'^{\!}}
\newcommand{\dd}{{\operatorname{d}}}
\newcommand{\at}{|}
\newcommand{\ceq}{\coloneqq}
\newcommand{\R}{{\mathbb{R}}}
\newcommand{\C}{{\mathbb{C}}}
\newcommand{\N}{\mathbb{N}}
\DeclareMathOperator{\id}{id}
\newcommand{\abs}[1]{\left\lvert#1\right\rvert} %absolute value
\newcommand{\nabs}[1]{\lvert{#1}\rvert} %absolute value
\newcommand{\nnorm}[1]{\lVert{#1}\rVert}
\newcommand{\ninnerprod}[1]{\langle{#1}\rangle}
\newcommand{\paren}[1]{\left(#1\right)}
\newcommand{\nparen}[1]{(#1)}
\newcommand{\bigparen}[1]{\big(#1\big)}
\newcommand{\biggparen}[1]{\bigg(#1\bigg)}
\newcommand{\intervalco}[1]{\left[#1\right)}
\newcommand{\intervalcc}[1]{\left[#1\right]}
\newcommand{\nintervalcc}[1]{{[#1]}}
\DeclareMathOperator{\Aut}{Aut}
\DeclareMathOperator{\supp}{supp}   %support
\DeclareMathOperator{\dist}{dist}  %Distance
\DeclareMathOperator{\End}{End}    %Endomorphisms
\DeclareMathOperator{\grad}{grad}
\newcommand{\pull}{\#}
\newcommand{\push}{\#}
\newcommand{\mynewtheorem}[4] %{BEZEICHNER}{COUNTER}{TITEL} - Für Numerierung mit \autoref aus dem \hyperref-Packet
{
\newaliascnt{#1}{#2}
\newtheorem{#1}[#1]{#3}
\aliascntresetthe{#1}
\expandafter\def\csname #1autorefname\endcsname{%
#4%
}%
}
\newtheorem{theorem}{Theorem}[section]
\theoremstyle{break}
\theoremstyle{plain}
\theoremstyle{break}
\theoremstyle{nonumberplain}
\newtheorem{proof}{\textsc{Proof}}
\begin{document}
\title{Computing the conformal barycenter}
\author{%
Jason Cantarella
\and
Henrik Schumacher
}

\maketitle

\begin{abstract}
\begin{small}
The conformal barycenter of a point cloud on the sphere at infinity of the Poincar\'e ball model of hyperbolic space is a hyperbolic analogue of the geometric median of a point cloud in Euclidean space. It was defined by Douady and Earle as part of a construction of a conformally natural way to extend homeomorphisms of the circle to homeomorphisms of the disk, and it plays a central role in Millson and Kapovich's model of the configuration space of cyclic linkages with fixed edgelengths.
 
In this paper we consider the problem of computing the conformal barycenter. Abikoff and Ye have given an iterative algorithm for measures on $\Sphere^1$ which is guaranteed to converge. We analyze Riemannian versions of Newton's method computed in the intrinsic geometry of the Poincare ball model. We give Newton-Kantorovich (NK) conditions under which we show that Newton's method with fixed step size is guaranteed to converge quadratically to the conformal barycenter for measures on any $\Sphere^d$ (including infinite-dimensional spheres). For measures given by $n$ atoms on a finite dimensional sphere which obey the NK conditions, we give an explicit linear bound on the computation time required to approximate the conformal barycenter to fixed error. We prove that our NK conditions hold for all but exponentially few $n$ atom measures. For all measures with a unique conformal barycenter we show that a regularized Newton's method with line search will always converge (eventually superlinearly) to the conformal barycenter. Though we do not have hard time bounds for this algorithm, experiments show that it is extremely efficient in practice and in particular much faster than the Abikoff-Ye iteration.
\\

\noindent
\textbf{MSC-2020 classification:} 
65D18, % Numerical aspects of computer graphics, image analysis, and computational geometry
65E10, %	Numerical methods in conformal mappings
53-08 % Computational methods for problems pertaining to differential geometry
\end{small}
\end{abstract}

\section{Introduction}\label{sec:intro}

\begin{figure}
\begin{center}
	\capstart %ensures that hyperlink will jump to the top of this image
\newcommand{\inc}[2]{\begin{tikzpicture}
    \node[inner sep=0pt] (fig) at (0,0) {\includegraphics{#1}};
	\node[above right= 0ex] at (fig.south west) {\begin{footnotesize}(#2)\end{footnotesize}};    
\end{tikzpicture}}%
\presetkeys{Gin}{
	trim = 0 100 15 20, 
	clip = true,  
	width = 0.333\textwidth
%	height= 0.165\textheight
}{}
	\hfill
	\inc{Geodesics2}{a}%
	\hfill
	\inc{Geodesics1}{b}%
	\hfill	
	\inc{Shift_Geometric}{c}%	
	\hfill{}	
	\caption{%
	(a)	
	Geodesics joining some $w$ in $\Disk$ to three points $x_1, x_2, x_3$ in~$\Sphere$ and the corresponding conformal directors.
	The directors $V_{x_i}(w)$ do not sum up to $0$.
	(b)
	Same as (a), but here the sum of directors $V_{x_i}(w)$ vanishes;  thus $w$ is the conformal barycenter of the $x_i$.
	(c) Geometric construction of the directors: 
	Each geodesic emanating from $x$ intersects the secant $p x$ in the same angle. Thus
	all directors $V_x(w)$ for $w$ on the secant $p x$  point in the same direction.
	}
\label{fig:conformalbary}
\end{center}
\end{figure}
In~\cite{MR0857678}, Douady and Earle defined the~\emph{conformal barycenter} of a measure on $\Sphere^{\AmbDim-1}$. Suppose we give the (open) ball $\Disk^\AmbDim$ the geometry of the Poincar\'e ball model of hyperbolic space. For each pair of points $w \in \Disk^\AmbDim$ and $x$ on the sphere $\Sphere^{\AmbDim-1}$ at infinity, there is a unique geodesic joining $w$ to $x$. The unit tangent vector to this geodesic at $w$ will be called the \emph{director} $V_x(w)$. The conformal barycenter (see~\cref{fig:conformalbary}) of $\mu$ is the point $w_*$ where the weighted average of directors vanishes:
\begin{align*}
	\textstyle
	F_\mu (w_*) \ceq \int_{x \in \Sphere^{\AmbDim-1}} V_x(w_*) \, \dd \mu(x) = 0. 
\end{align*}
The conformal barycenter is comparable to the geometric median~\cite{MR1933966} of a weighted point cloud $x_1,\dotsc x_n$ in Euclidean space with weights $\omega_1,\dotsc,\omega_n$, which is (generically) the point $w_*$ where
$
     \textstyle \sum_{i=1}^n \omega_i \, (x_i - w_*) / \nabs{x_i - w_*} = 0.
$
The conformal barycenter is clearly isometry-equivariant in hyperbolic geometry; this means that if $w_*(\mu)$ is the conformal barycenter of $\mu$ and $\varphi$ is a M\"obius transformation of $\Disk^\AmbDim$,
then $w_*(\varphi_\push \mu) = \varphi(w_*(\mu))$, where $\varphi_\push \mu$ is the push-forward of~$\mu$.  Douady and Earle's original motivation for their construction was to provide a canonical extension of homeomorphisms of the circle to homeomorphisms of the disk that is also conformally equivariant.

The conformal barycenter for discrete measures composed of $n$ atoms with weights $\omega_1, \dots, \omega_n$ plays a central role in the symplectic model for the space of closed polygons with fixed edgelengths in $\R^{3}$ given by Kapovich and Millson~\cite{MR1431002}. They identify the atoms $x_1, \dots, x_n$ of $\mu$ with the directions of the edges of the polygon and $\omega_1, \dots, \omega_n$ with the lengths of the edges. Such a polygon is closed if and only if $\sum_i \omega_i \, x_i = 0$. They show that for a ``stable'' (see~\cref{def:stable}) measure $\mu$ of this type, there is a M\"obius transformation $\varphi$ so that the conformal barycenter $w_*(\varphi_\push \mu) = 0$. Further, this $\varphi$ is unique up to postcomposition with an element of $\operatorname{SO}(3)$, and $w_*(\mu) = 0$ if and only if $\sum_i \omega_i \, x_i = 0$.

This construction allows them to show (Theorem 2.7) that the quotient space of ``nice semi-stable'' (\cref{def:stable}) $n$-atom measures $\mu$ with weights $\omega_1, \dots, \omega_n$ with respect to all M\"obius transformations is homeomorphic (and even complex-analytically equivalent) to the quotient space of $n$-edge closed polygons with edgelengths $\omega_1, \dots, \omega_n$ by the action of~$\operatorname{SO}(3)$. 

Generalizing their construction in the obvious way, this means that the space of nice semi-stable measures $\mu$ (or weighted point clouds on $\Sphere^{\AmbDim-1}$) is an \emph{exact} (but redundant) system of coordinates for the space of closed polygonal linkages with fixed edgelengths in $\R^\AmbDim$. That is, every stable weighted point cloud $\mu$ on $\Sphere^{\AmbDim-1}$ exactly represents a unique (up to the action of $\operatorname{SO}(\AmbDim)$) closed polygon in $\R^\AmbDim$ with the corresponding edgelengths. (We lose complex-analytic equivalence, as the spaces do not generally carry a complex structure.)

\begin{figure}
\newlength{\subht}
\newsavebox{\subbox}
\sbox\subbox{%
  \resizebox{\dimexpr1.0\textwidth}{!}{%
    \includegraphics[trim = 0 0 25 50, clip = true, height= 0.5\textheight]{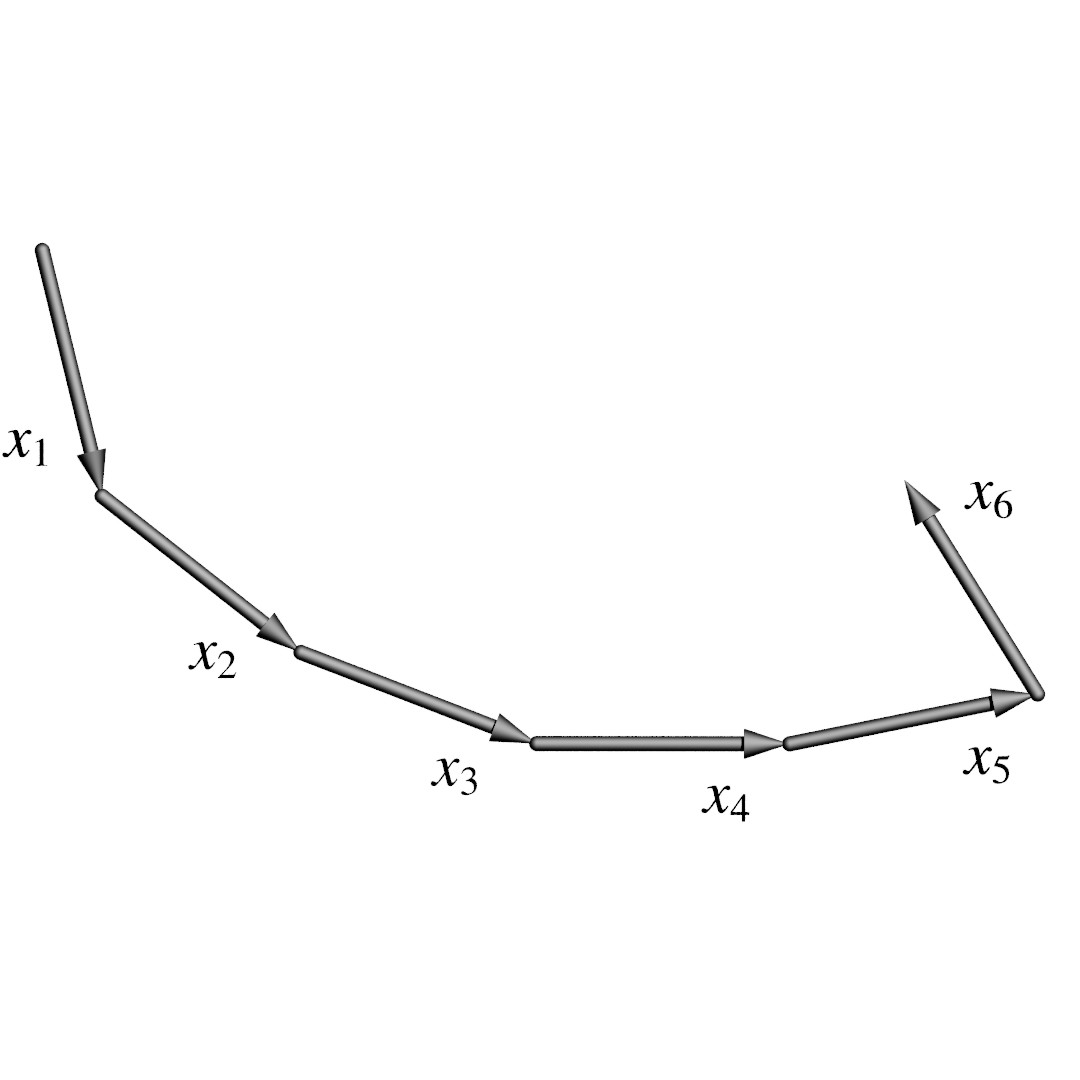}%
    \includegraphics[trim = 10 45 50 50, clip = true, height= 0.5\textheight]{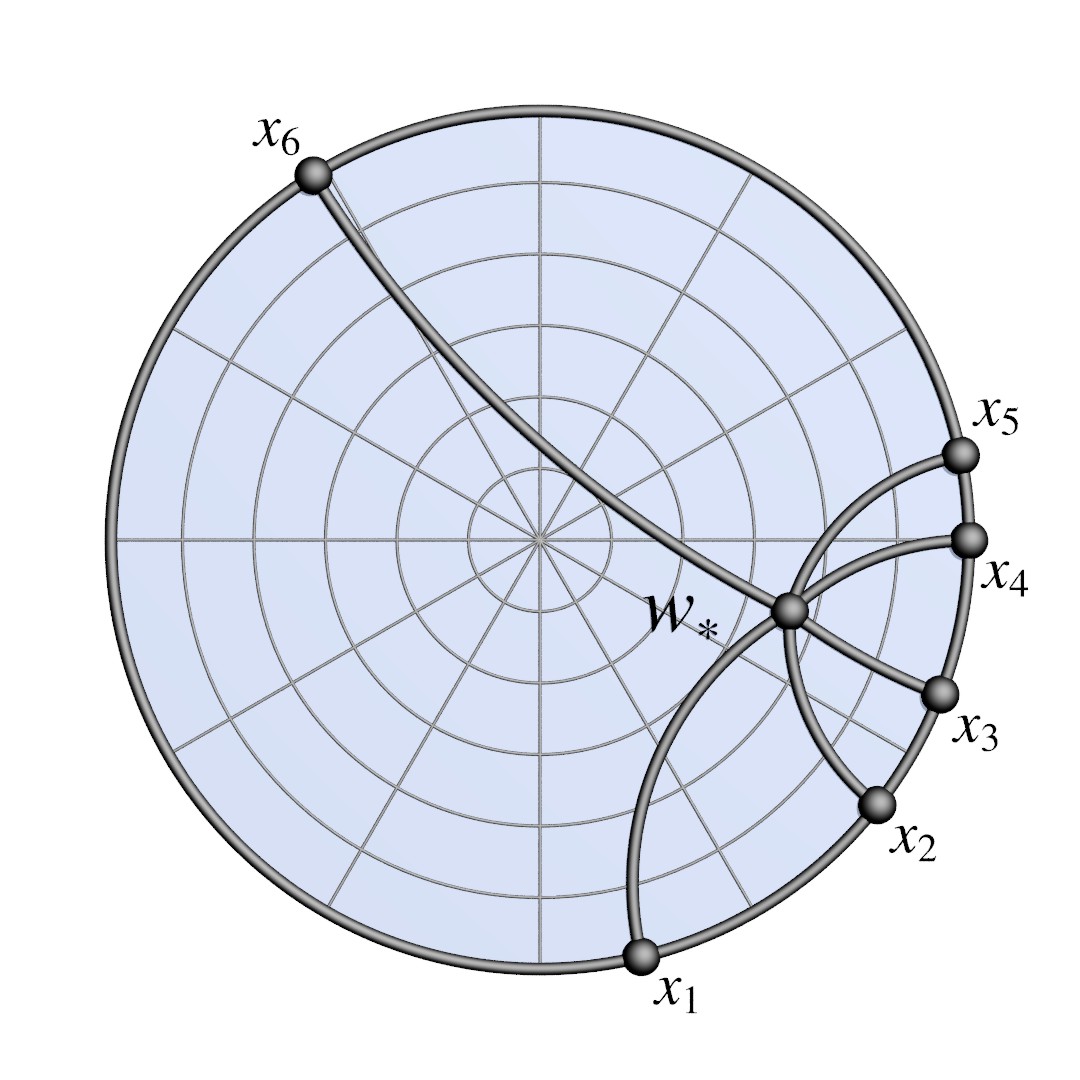}% 
	\includegraphics[trim = 10 45 50 50, clip = true, height= 0.5\textheight]{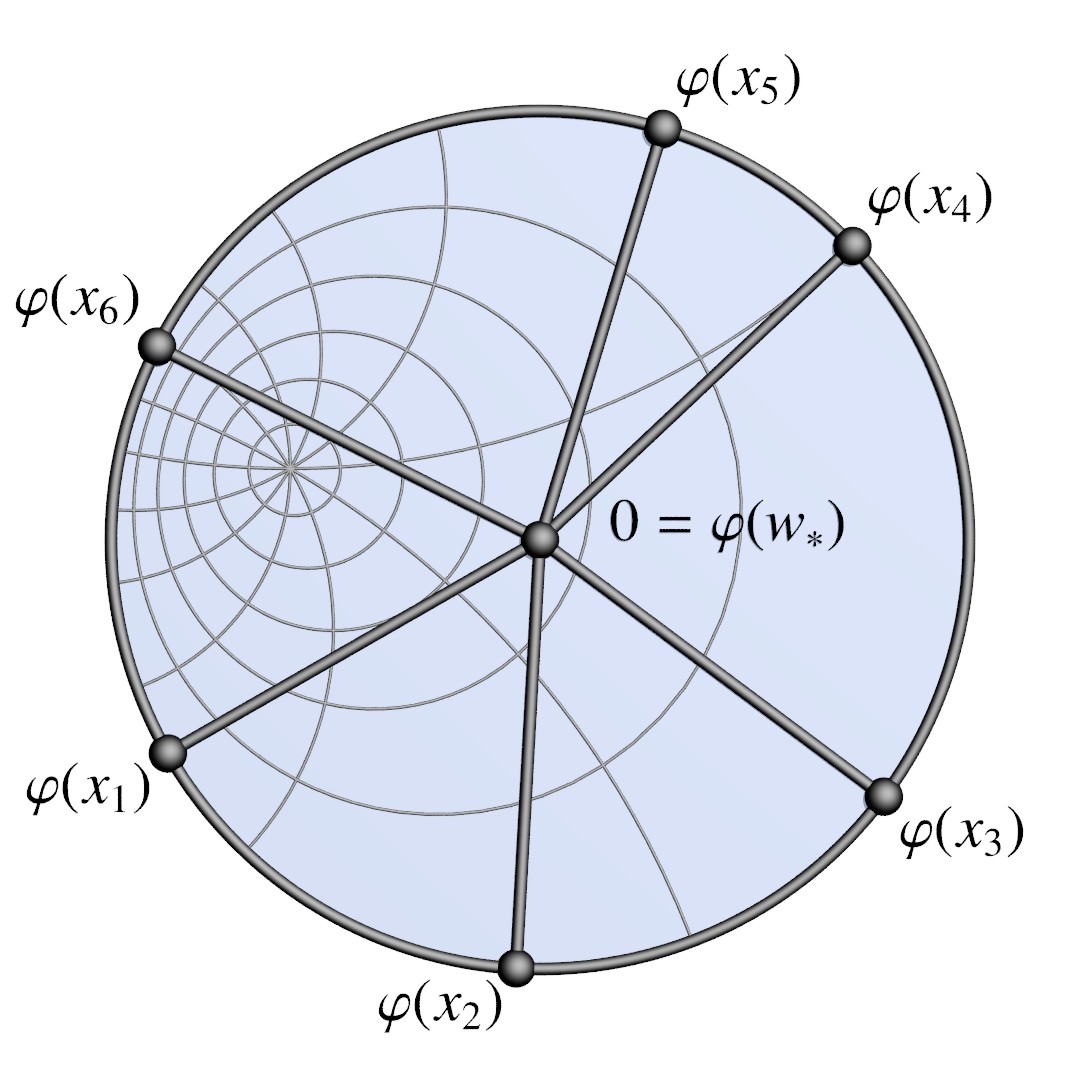}%  
	\includegraphics[trim = 170 25 160 25, clip = true, height= 0.5\textheight]{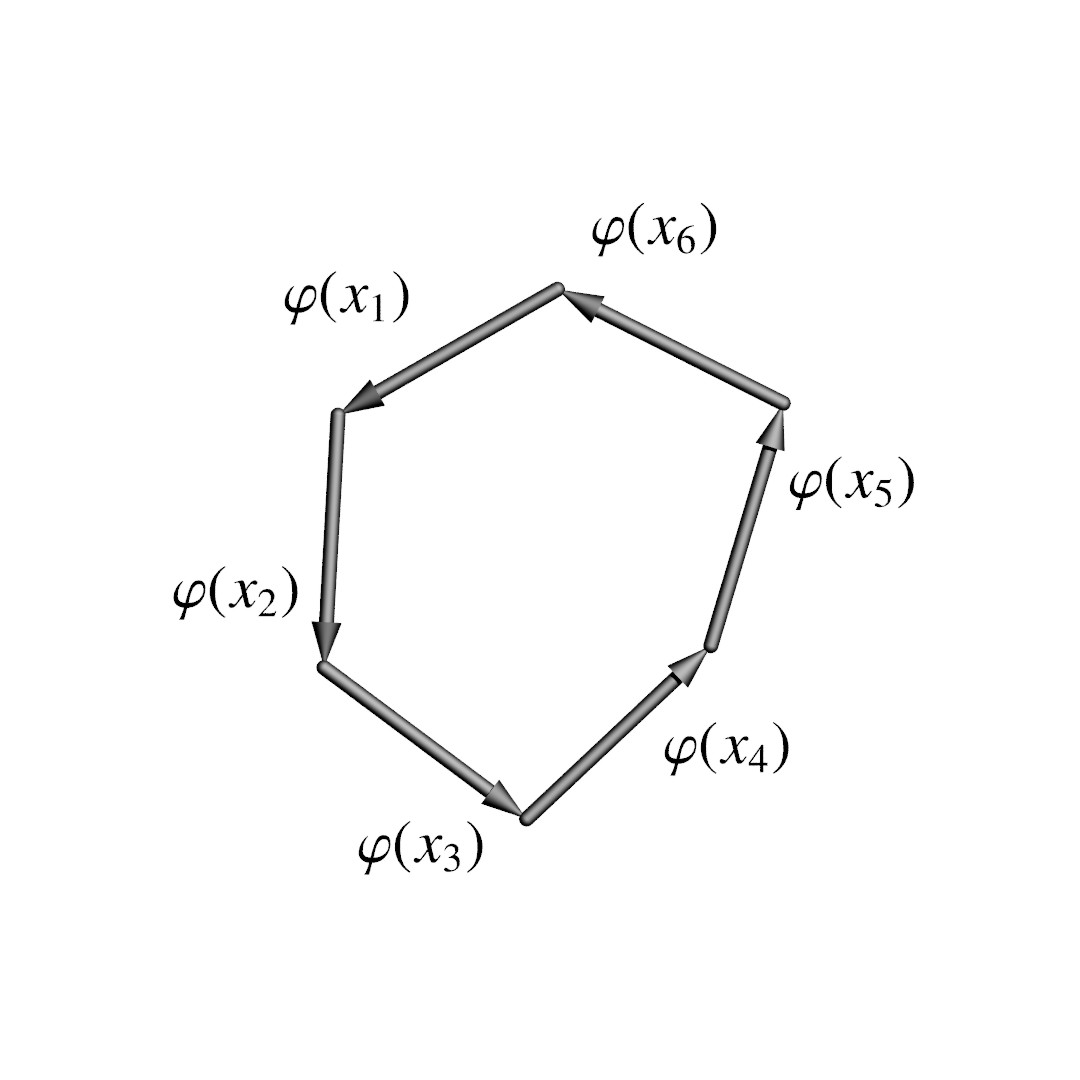}%	  
  }%
}
\setlength{\subht}{\ht\subbox}
\begin{center}
\newcommand{\inc}[2]{\begin{tikzpicture}
    \node[inner sep=0pt] (fig) at (0,0) {\includegraphics[clip = true, height= \subht]{#1}};
	\node[above right= 0ex] at (fig.south west){\begin{footnotesize}(#2)\end{footnotesize}};    
\end{tikzpicture}}%
	\presetkeys{Gin}{trim = 0 0 25 50}{}%
	\inc{ConformalClosure_Explained_1}{a}%
	\hfill
	\presetkeys{Gin}{trim = 90 45 50 50}{}%	
	\inc{ConformalClosure_Explained_2}{b}%
	\hfill
	\presetkeys{Gin}{trim = 10 45 50 50}{}%
	\inc{ConformalClosure_Explained_3}{c}%
	\hfill	
	\presetkeys{Gin}{trim = 170 25 160 25}{}%	
	\inc{ConformalClosure_Explained_4}{d}%	
\caption{
Millson and Kapovich's construction:
(a)~An open polygon. 
(b)~The corresponding point measure $\mu$ on $\Sphere^1$ where the masses $x_i$ of the measure are located at the directions of the edges (and weighted by their lengths), together with the conformal barycenter $w_*$ of $\mu$. 
(c)~We see $\mu$ shifted by a conformal transformation $\varphi$ to bring the conformal barycenter (and hence the center of mass) to the origin. 
(d)~The transformed edges make up a closed polygon with the same edgelengths. 
}
\label{fig:ConformalClosure_Explained}
\end{center}
\end{figure}

Such a system of coordinates promises to be extremely useful in studying linkage problems in computational geometry and in implementing robust numerical methods for linkage reconfiguration. The linkage reconfiguration problem has a vast array of applications, including robotics, protein folding, polymer science and computational origami (see~\cite{MR2354878} for an overview of the field). However, to make these coordinates computationally practical, we must solve one remaining problem: If the $\mu$ representing our polygon has $\sum_i \omega_i \, x_i \neq 0$, to recover the unique closed polygon represented by $\mu$ we must find the  M\"obius transformation $\varphi$ so that $w_*(\varphi(\mu)) = 0$. Since $w_*(\varphi_\push \mu) = \varphi(w_*(\mu))$, this is equivalent to finding $w_*(\mu)$. \cref{fig:ConformalClosure_Explained} shows an example of a stable weighted point cloud $\mu$ on $\Sphere^1$ with $\sum \omega_i \, x_i \neq 0$, the corresponding open polygon, the M\"obius transformed $\varphi_\push \mu$ with $\sum \omega_i \, \varphi(x_i) = 0$, and the corresponding closed polygon.
%In this case, the conformal barycenter plays a role in hyperbolic geometry analogous to the geometric median or Fermat-Weber point in Euclidean geometry, which is the point that minimizes the $L^1$ sum of distances to a fixed set of points. The geometric median has a long history and has proved useful both as a robust statistic and in a variety of applications~(cf.\ \cite{Hamacher:2002vp,Vardi:2000kq}).

This motivates us to consider the question of numerically computing the conformal barycenter of a measure on $\Sphere^{\AmbDim-1}$ and to pay particular attention to the case of $n$-atom measures with variable weights. Abikoff and Ye~\cite{MR1476978} and Abikoff~\cite{MR1894482} have given an iterative algorithm based on a suggestion of Milnor and an algorithm based on Newton's method in the Euclidean geometry of $\R^2$ for computing the conformal barycenter of such a measure.
However, they only prove that their method converges for measures on $\Sphere^1$ and were not able to give effective bounds for the rate of convergence. 
In practice, their method can be very slow for highly concentrated measures.
The primary insight of this paper is that switching the setting of Newton's method from the Euclidean geometry of the unit ball to the hyperbolic geometry of the Poincar\'e ball model leads to greatly improved theoretical and numerical results. 

After recording some preliminary calculations (\cref{sec:background}), we will analyze the Riemannian Newton's method with fixed step size (\cref{sec:newton with fixed stepsize}). We give Newton-Kantorovich (NK) conditions under which the iteration converges quadratically for all steps (\cref{thm:quadratic convergence}) using a theorem of Ferreira and Svaiter~\cite{MR1895088}. Our convergence results hold for all $d$, including infinite-dimensional spheres. We then show that under NK conditions, our algorithm solves the problem ``compute the conformal barycenter of an $n$-atom measure on $\Sphere^d$ to within error $\varepsilon$'' in $O(n)$ time~(\cref{cor:time bound under NK conditions}). 

We then turn to the question of how often~\cref{cor:time bound under NK conditions} applies to $n$ atom measures (\cref{sec:how often do NK conditions hold}). The answer is ``in all but exponentially few (in $n$) cases'' (\cref{thm:how often}). The key tool is an estimate on the eigenvalues of a certain random matrix previously computed by one of us (Cantarella) using the Matrix Bernstein inequality~\cite{MR3871192}. 

We then consider cases where the NK conditions may not hold (\cref{sec:NewtonRegularized}). We analyze the behavior of a regularized Newton's method with line search~\eqref{eq:Newton3.1}--\eqref{eq:Newton3.2}. We show that if $\mu$ has a unique conformal barycenter $w_*$, the algorithm converges (eventually Q-quadratically) to $w_*$~(\cref{thm:main}) using results of Ring and Wirth on Riemannian Newton methods~\cite{MR2968868}. The key idea is to recast the problem as finding the minimizer of a function $\varPsi_\mu$~\eqref{eq:Potential} which we can prove to be uniformly convex on balls of finite radius in hyperbolic geometry~(\cref{lem:Uniform Convexity}). Again, our results prove convergence and bound the rate for all finite $\AmbDim$ and even for infinite dimensional spheres.

We conclude by using our algorithm to compute examples of polygon closures and Douady-Earle extensions of maps from $\Sphere^1$ into $\Sphere^2$ (\cref{sec:experiments}). Our algorithm performs very well even when the Milnor-Abikoff-Ye iteration struggles to converge. 

\section{Background} 
\label{sec:background}

Although we currently only have applications for the computation of conformal barycenters for measures on $\Sphere^{\AmbDim-1}$ where $\AmbDim$ is finite, all our methods work just as well when the ambient space is an arbitrary Hilbert space. Therefore, we will work in that context for generality. 
 
Let $H$ be a real Hilbert space with inner product $\ninnerprod{\cdot , \cdot } \colon H \times H \to \R$, norm $\nabs{u} \ceq \sqrt{\ninnerprod{u,u}}$, and with Riesz isomorphism $(\cdot)\transp \colon H \to H'$ given by $u\transp(v) \ceq \ninnerprod{u ,v}$ for $u$, $v \in H$. We now recall the Poincar\'e ball model for  hyperbolic space on $H$: $\Disk$ will denote the (open) ball $\set{z \in H | \nabs{z}< 1}$ with Riemannian metric $g \colon \Disk \to L^2(\TangentBundle\Disk;\R)$ given by
\begin{align}
	g \at_w(X, Y) \ceq \tfrac{4}{(1- \nabs{w}^2)^{2}} \,  \ninnerprod{X,Y},
	\qquad
	\text{for $w \in \Disk$ and $X$, $Y \in \TangentBundle_w \Disk \cong H$.}
	\label{eq:metric}
\end{align}
The length of a tangent vector $X \in \TangentBundle_w \Disk$ \emph{with respect to $g$} will be denoted by $\nabs{X}_g \ceq \sqrt{g\at_w(X,X)}$; we have to distinguish it carefully from $\nabs{X}$.
The induced Riesz isomorphism will be denoted by $(\cdot)^\flat \colon T_w \Disk \to T_w\dual \Disk$, $X \mapsto X^\flat$ and
the geodesic distance between two points $w_1$, $w_2 \in \Disk$ will be denoted by $d_g(w_1,w_2)$.
We point out that even if $H$ is infinite-dimensional, the Poincar\'e ball model $(\Disk,g)$ is a (strongly) Riemannian manifold in the sense that $g\at_w$ turns $\TangentBundle_w \Disk$ into a Hilbert space (see, e.g., \cite[Chapter VII]{MR1335233}). Moreover, $(\Disk,g)$ is \emph{geodesically complete} and \emph{geodesically convex}, properties inherited basically from the two-dimensional Poincar\'e model. One can deduce these properties also from the observation that the Riemannian exponential map $\exp_0 \colon \TangentBundle_0 \Disk \to \Disk$ 
is just a reparameterized linear ray of the form 
\begin{align}
	\exp_0(X) 
	= 
	\tfrac{\tanh(\nabs{X}_g /2)}{\nabs{X}_g/2} \, X
	= 
	\tfrac{\tanh(\nabs{X})}{\nabs{X}} \, X	
	\quad
	\text{for $X \in \TangentBundle_0\Disk \cong H$.}
	\label{eq:RadialGeodesic}
\end{align}
Thus, geodesics emanating from the origin are just reparameterized straight lines, and geodesic completeness and geodesic convexity follow directly from the transitivity of the isometry group $\Aut(\Disk,g)$ (see \eqref{sec:ShiftTransformation}) and from the fact that straight lines can intersect at most once (due to negative curvature).

The unit sphere in $H$ will be denoted by $\Sphere \ceq \set{ w \in H | \nabs{w}  = 1}$. Likewise, the unit sphere with respect to the Poincar\'e metric $g$ in the tangent space $\TangentBundle_w \Disk$ will be denoted by $\UnitTangent_w \Disk \ceq \set{ X \in \TangentBundle_w \Disk | \nabs{X}_g = 1}$.
For each $w \in \Disk$ and each $x \in \Sphere$, there is a unique unit tangent vector $V_x(w) = V(w,x) \in \UnitTangent_w \Disk$ satisfying
\begin{align*}
	\lim_{t \to \infty} \exp_w (t \, V_x(w)) = x,
\end{align*}
where $\exp \colon\! \TangentBundle\Disk \to \Disk$ denotes the Riemannian exponential map with respect to the metric $g$ and where the limit is to be interpreted as limit in the topology of $H$.
Soon, we will derive a concrete expression for $V_x(w)$ and realize that the \emph{director mapping} $V \colon \Disk \times \Sphere \to \UnitTangent \Disk$ is a smooth mapping into the sphere bundle $\UnitTangent \Disk$ (see \cref{sec:DirectorField}).
For now, we define:

\begin{definition} Let $\mu$ be a Borel probability measure on $\Sphere$. We say $w \in \Disk$ is a \emph{conformal barycenter of $\mu$} if and only if
\begin{align}
	\textstyle
	F_\mu (w)  = 0,
	\quad
	\text{where}
	\quad
	F_\mu (w) \ceq \int_\Sphere V_x(w) \, \dd \mu(x).
	\label{eq:ConformalBarycenter}
\end{align}
We call $\mu$ \emph{conformally centralized} if $F_\mu(0) = 0$.
\end{definition}

Our aim is to derive conditions on $\mu$ that ensure that $F_\mu(w)  = 0$ has a unique solution $w_*$ (in which case we are allowed to talk about \emph{the} conformal barycenter) and use Newton's method in hyperbolic space to compute $w_*$ iteratively.

\subsection{Shift Transformation}\label{sec:ShiftTransformation}

From \eqref{eq:RadialGeodesic}, it follows that $V_x(0) = \frac{1}{2} \, x$. 
In order to obtain an explicit expression for $V_x(w)$ for general $w \in \Disk$, it is useful to understand the isometry group $\Aut(\Disk,g)$ of the Riemannian manifold $(\Disk,g)$ first.
This group is characterized as the group of those M\"obius transformations of $H \cup \{\infty\}$ that map the unit ball $\Disk$ onto itself.
This is precisely the group generated by inversions in those spheres that meet $\Sphere$ perpendicularly. 
Such spheres cannot have their center on $\Sphere$. Hence each isometry $\varphi \colon \Disk \to \Disk$ induces also a unique conformal diffeomorphism $\ext \varphi \colon \Sphere \to \Sphere$.
By the construction of $V$, we have for each $\varphi \in \Aut(\Disk,g)$ that
\begin{align}
	V_{\varphi(x)}(\varphi(w)) = \dd \varphi(w) \, V_x(w)	
	\quad \text{for all $w \in \Disk$ and $x \in \Sphere$.}
	\label{eq:TransformationRulesforDirectors}
\end{align}
This induces the following transformation rule for $F$ under $\varphi \in \Aut(\Disk,g)$:
\begin{align}
	F_{\ext \varphi_\push \mu}(\varphi(w))
%	=
%	\textstyle
%	\int_\Sphere V(\varphi(w), y) \, \dd (\ext \varphi_\push \mu)(y)
%	=
%	\textstyle
%	\int_\Sphere V(\varphi(w), \ext \varphi(x)) \, \dd \mu(x)	
	=
	\dd \varphi(w)\, F_\mu(w).
	\label{eq:TransformationRulesforF}
\end{align}
If we find an isometry $\varphi$
that maps a conformal barycenter $w_*$ of $\mu$ (if existent) to $0$, 
we can construct a centralized measure $\ext \varphi_\push \mu$ as we have
$F_{\ext \varphi_\push \mu}(0) = F_{\ext \varphi_\push \mu}(\varphi(w_*))
= 	\dd \varphi(w_*)\, F_\mu(w_*) = 0$.
To this end, we associate a unique hyperbolic translation $\Shift_w \in \Aut(\Disk,g)$ with each 
$w \in \Disk$.
It is characterized by $\Shift_w(w) = 0$ and can be expressed by
\begin{align}
	\Shift \colon \Disk \times \Disk \to \Disk,
	\quad
	\Shift(w,z) 
	\ceq 
	\Shift_w(z)
	\ceq 	
	\frac{(1- \nabs{w}^2) \, z - (1 + \nabs{z}^2 - 2 \, \ninnerprod{w,z}) \, w}{1- 2 \ninnerprod{w,z} + \nabs{w}^2 \nabs{z}^2}.
	\label{eq:ShiftTrafo}
\end{align}
In the special case $H = \R^2 \cong \C$, we may rewrite $\Shift$ in complex arithmetic as
$
	\Shift(w,z) = \frac{z-w}{1- \bar w \, z}.
$
This reveals that $\Shift$ is merely an orientation-preserving version of the map $\eta$ from~\cite{MR1894482}.

As $\Disk \subset H$ is a an open set, we have a canonical identification $T_z \Disk \cong H$.
If we let $C(w,z) = (1 - \nabs{w}^2) \, (1 - 2 \, \ninnerprod{w,z} + \nabs{w}^2 \nabs{z}^2)^{-1}$, then we can easily check that $\Shift_w$ is orientation preserving as its differential is given by
\begin{equation}
	\dd \Shift_w(z)
	=
	C(w,z) \, 
	\biggparen{
	\id_H 
	- 
	2
	\,
	\frac{
		\nabs{z}^2 \, w \, w\transp + (w \, z\transp - z  \, w\transp) - 2 \, \ninnerprod{w,z} \, w \, z\transp + \nabs{w}^2  \, z  \, z\transp
	}{
		1 - 2 \, \ninnerprod{w,z}  + \nabs{w}^2 \nabs{z}^2
	}  
	}
	.
	\label{eq:pd2f}
\end{equation}
It is now straightforward to compute that $\dd  \Shift_w(z) \, (\dd  \Shift_w(z))\transp = C^2(w,z) \, \id_H$, and hence that $\dd  \Shift_w(z) $ is a similarity matrix with conformal factor $C(w,z)$.
The mapping  $\Shift_w$ is a hyperbolic translation in the sense that it maps the geodesic though $w$ and $0$ to itself; more precisely, we have
$\Shift(w,t \, w) =  (t - 1)(1 - t \, \abs{w}^2)^{-1} \, w$.
Its inverse has the simple form $(\Shift_w)^{-1} = \Shift_{(-w)}$.
For each linear subspace $E \subset H$ and for $w \in E \cap \Disk$, the transformation $\Shift_w$ maps $E \cap \Disk$ to itself.
As for all elements of $\Aut(\Disk,g)$, $\Shift_w$ can be extended to $\Sphere$, and we have
\begin{align}
	\ext \Shift_w(x) \ceq 
	\Shift(w,x)
	\ceq
	\lim_{z \to x} \Shift(w,z)
	=
	\frac{(1- \nabs{w}^2) \, x - 2(1 - \ninnerprod{w,x}) \, w}{1- 2 \, \ninnerprod{w,x} + \nabs{w}^2},
	\label{eq:SphereTrafo}
\end{align}
where the limit is taken with respect the topology of $H$. A short calculation reveals that $\ext \Shift_w$ indeed maps $\Sphere$ to $\Sphere$. There is a nice geometric interpretation of~\eqref{eq:SphereTrafo}: if we extend the (Euclidean) line from $x$ through $w$ until it strikes $\Sphere$ at some point $p$, then $\Shift_w(x) = -p$. We will leave the proof to the reader (see also \autoref{fig:conformalbary}, (c)), but we now use this observation to establish:

\begin{lemma}\label{lem:shift distance}
For $w_1$, $w_2 \in \Disk$, $x \in \Sphere$, we have 
$d_\Sphere(\Shift_{w_1}(x),\Shift_{w_2}(x)) \leq 2\, d_g(w_1,w_2)$.
\end{lemma}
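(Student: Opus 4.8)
The plan is to exploit the geometric picture established just above: $\Shift_w(x)=-p(w,x)$, where $p(w,x)\in\Sphere$ is the second point at which the Euclidean ray from $x$ through $w$ meets $\Sphere$. Since $d_\Sphere(-a,-b)=d_\Sphere(a,b)$ for all $a,b\in\Sphere$, the claim reduces to
\begin{align*}
	d_\Sphere\bigl(p(w_1,x),p(w_2,x)\bigr)\le 2\,d_g(w_1,w_2).
\end{align*}
I would prove this by factoring through the Euclidean angle at the ideal point $x$: an inscribed-angle bound
\begin{align*}
	d_\Sphere\bigl(p(w_1,x),p(w_2,x)\bigr)\le 2\,\Angle{w_1-x}{w_2-x}
\end{align*}
followed by a hyperbolic Lipschitz bound $\Angle{w_1-x}{w_2-x}\le d_g(w_1,w_2)$.

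For the inscribed-angle bound: since $p(w_i,x)$ lies on the ray from $x$ through $w_i$, the vector $p(w_i,x)-x$ is a positive multiple of $w_i-x$, hence $\Angle{p(w_1,x)-x}{p(w_2,x)-x}=\Angle{w_1-x}{w_2-x}$. If $p(w_1,x)=p(w_2,x)$ --- which also covers $w_1=w_2$ and the case that $x,w_1,w_2$ are collinear --- the left-hand side is zero and there is nothing to prove. Otherwise $x,p(w_1,x),p(w_2,x)$ are three distinct points of $\Sphere$, hence affinely independent, and they lie on the circle in which their affine span meets $\Sphere$; this circle has radius at most $1$, so the length of the shorter arc between $p(w_1,x)$ and $p(w_2,x)$ --- an upper bound for $d_\Sphere$ between them --- is at most its central angle, which by the inscribed-angle theorem (applied in that affine plane) is at most twice the inscribed angle at $x$.

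For the hyperbolic bound, consider the radial projection from $x$,
\begin{align*}
	\Theta_x\colon\Disk\to\Sphere,\qquad\Theta_x(w)\ceq\frac{w-x}{\nabs{w-x}}.
\end{align*}
A short computation gives $\dd\Theta_x(w)\,V=\nabs{w-x}^{-1}\,\Pi_{w,x}(V)$, where $\Pi_{w,x}$ denotes orthogonal projection onto the orthogonal complement of $w-x$; hence $\dd\Theta_x(w)\,V$ has round length at most $\nabs{V}/\nabs{w-x}$, while $\nabs{V}_g=2\,\nabs{V}/(1-\nabs{w}^2)$. Thus the operator norm of $\dd\Theta_x(w)$, from $(\TangentBundle_w\Disk,g\at_w)$ to $\TangentBundle_{\Theta_x(w)}\Sphere$ with the round metric, is at most $\tfrac{1-\nabs{w}^2}{2\,\nabs{w-x}}$, which is $\le 1$ because $\nabs{w-x}\ge 1-\nabs{w}$ and $1+\nabs{w}\le 2$. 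Hence $\Theta_x$ is $1$-Lipschitz from $(\Disk,d_g)$ to $(\Sphere,d_\Sphere)$ --- bound the round length of $\Theta_x\circ\gamma$ for a path $\gamma$ and take the infimum over $\gamma$ --- and since $\Angle{w_1-x}{w_2-x}=d_\Sphere\bigl(\Theta_x(w_1),\Theta_x(w_2)\bigr)$, this is the required estimate; chaining the two bounds finishes the proof.

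The substance of the argument is the second step: recognizing that the Euclidean angle subtended at the ideal point $x$ is the right intermediate quantity, and that radial projection from a boundary point is non-expanding from the hyperbolic to the round metric --- after which everything collapses to the elementary inequality $1-\nabs{w}^2\le 2\,\nabs{w-x}$. (One could instead try to bound the operator norm of $\dd(w\mapsto\Shift_w(x))$ directly from~\eqref{eq:SphereTrafo}, but that seems to force a restriction to $\operatorname{span}\{w,x\}$ and a separate two-dimensional estimate.) The remaining care goes into phrasing the inscribed-angle step cleanly in an arbitrary, possibly infinite-dimensional, Hilbert space; that is routine. Note that the constant $2$ is sharp --- both inequalities become equalities in the limit $w_1,w_2\to x$ along directions enclosing a small angle.
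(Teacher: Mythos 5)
Your proof is correct, and its second half takes a genuinely different route from the paper's. The first half coincides: both arguments use $\Shift_{w_i}(x)=-p_i$ and the central/inscribed-angle theorem in the affine plane of $x,p_1,p_2$ to reduce the claim to $\Angle{w_1-x}{w_2-x}\le d_g(w_1,w_2)$ (and your handling of the degenerate case $p_1=p_2$ and of the "at most" direction of the inscribed-angle theorem is fine). For the remaining angle estimate the paper argues extrinsically: it maximizes $\theta(x)=\Angle{w_1-x}{w_2-x}$ over $x\in\Sphere$, shows the maximizer is coplanar with $0,w_1,w_2$ to reduce to the hyperbolic plane, and then runs a two-case argument with hypercycles, using that the distance $f(\alpha)=\operatorname{arsinh}(\tan\alpha)$ from a secant to its axis satisfies $f'\ge 1$. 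You instead observe that the radial projection $\Theta_x(w)=(w-x)/\nabs{w-x}$ is $1$-Lipschitz from $(\Disk,d_g)$ to $(\Sphere,d_\Sphere)$, which follows from the pointwise differential bound $\nnorm{\dd\Theta_x(w)}\le\tfrac{1-\nabs{w}^2}{2\nabs{w-x}}\le 1$, i.e.\ from the elementary inequality $1-\nabs{w}^2\le 2\nabs{w-x}$; your computation of $\dd\Theta_x$ and the norm comparison with $\nabs{\cdot}_g$ check out. This is shorter, avoids both the compactness/criticality argument locating the worst $x$ and the hypercycle case analysis, and works verbatim in any Hilbert space without the planar reduction for this step. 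What the paper's route buys in exchange is an explicit picture of the extremal configurations (equidistant curves), but as a proof of the stated inequality your version is cleaner.
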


\begin{figure}
\begin{center}
	\capstart %ensures that hyperlink will jump to the top of this image
\newcommand{\inc}[2]{\begin{tikzpicture}
    \node[inner sep=0pt] (fig) at (0,0) {\includegraphics{#1}};
	\node[above right= 0.ex] at (fig.south west) {\begin{footnotesize}(#2)\end{footnotesize}};    
\end{tikzpicture}}%
\presetkeys{Gin}{
	trim = 0 100 10 60, 
	clip = true,  
	width = 0.333\textwidth
}{}%	
	\hfill		
	\inc{Hypercycles}{a}%	
	\hfill		
	\inc{Hypercycles2}{b}%		
	\hfill{}
	\caption{
	(a)	
	The secants $C_1$ and $C_2$ are hypercycles with axes $G_1$ and $G_2$, respectively.
	(b)
	The hypercycle $C$ with axis $G_1$ that is tangent to $C_2$ meets $C_1$ at the angle $\theta =\theta(x)$.
	}
\label{fig:ProofSketch}
\end{center}
\end{figure}

\begin{proof}
We may focus our attention to the linear space spanned by $w_1$, $w_2$, and $x$,
so without loss of generality, we may assume that $H$ is three-dimensional.
Let $C_i$ denote the Euclidean secant through $x$ containing $w_i$ and let
$p_i$ be the other end point of  $C_i$. The plane containing $w_1$, $w_2$ and $x$
cuts $\Sphere$ in a circle with center $z$ and radius $r \leq 1$. 
From our observation above, we know that $\Shift_{w_i}(x) = -p_i$,
so the central angle theorem implies
\begin{align*}
	d_\Sphere(\Shift_{w_1}(x),\Shift_{w_2}(x))	
	&=
	d_\Sphere(-p_1,-p_2)
	= 
	d_\Sphere(p_1,p_2) 	\\
	&\leq 
	r \angle(p_1,z,p_2) 
	\leq
	\angle(p_1,z,p_2)
	= 
	2 \, \angle(p_1,x, p_2) = 2 \, \angle(w_1,x, w_2),
\end{align*}
leaving us to show $\angle(w_1,x, w_2) \leq d_g(w_1,w_2)$.

It suffices to do so for the maximizer $x = x_*$ of 
$\theta(x) \ceq \angle (w_1 ,x, w_2)$ on $\Sphere$.
Observe that $\theta$ is continuous on $\Sphere$ (the only discontinuities of $\theta$ are $w_1$ and $w_2$. So by compactness, a global maximizer  $x_*$ of $\theta$ on $\Sphere$ must exist.
Next we show that $w_1$, $w_2$, $x_*$, and $0$ are coplanar:
Denote the (Euclidean) straight line through $w_1$ and $w_2$ by $L$.
The minimizers of $\theta$ on $\Sphere$ are precisely the two points in $L \cap \Sphere$, and $\theta$ is smooth away from $L$.
Hence  $x_*$ must be a critical point of $\theta|_\Sphere$. This means that
the surface normals $\nu_\Sphere(x_*) = x_*$ of $\Sphere$ and $\nu_\varSigma(x_*)$ of the levelset $\varSigma$ of $\theta$ at the point $x_*$ are colinear.
Levelsets of $\theta$ are surfaces of revolution about $L$, and so $\nu_\varSigma(x_*)$ is in the plane spanned by $x_*$, $w_1$ and $w_2$. By colinearity, this means that $\nu_\Sphere(x_*) = x_*$ is also in this plane, which must then pass through the origin.

Hence it suffices to show $\theta(x) \leq d_g(w_1,w_2)$ in the case that $H$ is the Euclidean plane.
Denote the diameter from $x$ to $-x$ by $G$.
Then there are two cases:
\newline
\textbf{Case 1:}
\emph{$G$ lies between $C_1$ and $C_2$.}
Denote the geodesic connecting $x$ and $p_i$ by $G_i$ and the angle at $x$ that is enclosed by $G_i$ and $C_i$ by $\alpha_i$ (see \autoref{fig:ProofSketch} (a)).
Each $C_i$ is a \emph{hypercycle} or \emph{equidistant} curve from the corresponding geodesic $G_i$. This means that 
each point $w \in C_i$ has the same minimal distance $\dist_g(w_i,G_i) = f(\alpha_i)$ to $G_i$ (see \cite[Chapter 14, Theorem C]{MR620163}). It is known that $f(\alpha) = \operatorname{arsinh}(\tan \alpha)$~(see \cite[p.\ 24]{MR1695450}). Because $G$ lies between $C_1$ and $C_2$, $G_1$ and $G_2$ also lie between $C_1$ and $C_2$, and we have
\begin{align*}
	d_g(w_1,w_2)
	&\geq 
	\dist_g(w_1,G_1) + \dist_g(w_2,G_2)
	=
	f (\alpha_1) +  f (\alpha_2)
	.
\end{align*}
Since $f(0) = 0$ and $f'(\alpha) = \sec \alpha \geq 1$ we have $f(\alpha) \geq \alpha$ for all $\alpha \in \intervalco{0, \uppi/2}$. Thus in particular, we have
$f(\alpha_1) +  f (\alpha_2) \geq \alpha_1 + \alpha_2 = \theta(x)$.
\newline
\textbf{Case 2:}
\emph{$G$ does not lie between $C_1$ and $C_2$.}
Without loss of generality, we may assume that $C_1$ lies between $G$ and $C_2$ (see \autoref{fig:ProofSketch} (b)).
Denote the angle at $x$ between $G$ and $C_1$ by $\omega$. Observe that $C_1$ lies also between $G_1$ and $C_2$.
Let $C$ be the circle through $x$ and $p_1$ that is tangent to $C_2$ at $x$.
Again, $C$ is equidistant from $G_1$, meeting $G_1$ at the angle $\omega + \theta(x)$.
Moreover, $C$ is equidistant from $C_1$ as well (see again \cite[Chapter 14, Theorem C]{MR620163}).
Hence we have
\begin{align*}
	d_g(w_1,w_2)
	&\geq
	\dist_g(w_1,C)
	=
	\dist_g(C,G_1)	
	-	
	\dist_g(C_1,G_1) 
	=	
	f (\omega + \theta(x))
	-
	f(\omega)
	.
\end{align*}
Since $f' \geq 1$, by the mean value theorem
$f(\omega + \theta(x)) - f(\omega) \geq
(\omega + \theta(x)) - \omega =  \theta(x)$.s
\end{proof}

\subsection{Director Field}\label{sec:DirectorField}

Utilizing the family of isometries $\Shift$ and the identity \eqref{eq:TransformationRulesforDirectors}, we may compute $V_x(w)$ for arbitrary $w \in \Disk$ by
first moving $w$ to $0$ by the shift transformation $\Shift_w$ (and by moving $x$ along with $\ext \Shift_w$), by evaluating $V$ there, and by transporting the result back to $w$ with $\dd \Shift_{(-w)}(0)$:
\begin{align*}
	V_x(w)
	&= \dd \Shift_{(-w)}(0) \cdot  V(0,\ext \Shift_w(x))
%	= \tfrac{1}{2} \, \dd \Shift_{(-w)}(0) \cdot   \ext \Shift_w(x) 
%	=  C(-w,0)\, \bigparen{ \tfrac{1}{2} \, \ext \Shift_w(x) }
	= \tfrac{1}{2} \, (1 - \nabs{w}^2) \, \ext \Shift_w(x)
	,
\end{align*}
where we used~\eqref{eq:pd2f}. Combined with~\eqref{eq:ShiftTrafo}, we obtain the explicit expression
\begin{align}
	V_x(w) 
	&= 
	\frac{1}{2}
	\frac{
		\nparen{1 - \nabs{w}^2}^2 \, x - 2 \, \nparen{1 - \nabs{w}^2} \, (1 - \ninnerprod{w,x}) \, w		
	}{
		1 - 2 \ninnerprod{w,x} + \nabs{w}^2
	}.
\label{eq:vxw}
\end{align}	
Because of
$
	1 - 2 \ninnerprod{w,x} + \nabs{w}^2 = \nabs{x - w}^2 >0
$
for all $w \in \Disk$ and $x \in \Sphere$, the mapping $V \colon \Disk \times \Sphere \to \UnitTangent \Sphere$ is smooth.

\subsection{Approximating conformally centralized measures}

We have already seen above that shifting $\mu$ by one of its conformal barycenters $w_*$ 
leads to the centered measure $(\ext \Shift_{w_*})_\push \mu$. 
We now show that a good approximation $w$ of $w_*$ will also lead to a good approximation $(\ext \Shift_{w})_\push \mu$ of $(\ext \Shift_{w_*})_\push \mu$. We recall a definition first.

\begin{definition}
\label{def:wasserstein}
Given two probability measures $\mu_1$ and $\mu_2$ on a space $X$, a \emph{transport plan} $\gamma$ between the $\mu_i$ is a probability measure $\gamma$ on $X \times X$ with marginals equal to the $\mu_i$. The \emph{Wasserstein distance $W_p$} is defined by
\begin{align*}
	W_p(\mu_1, \mu_2)
	=
	\begin{cases}
		\;
		\displaystyle
		\inf_{\gamma}
		\bigparen{ \textstyle \iint_{\Sphere \times \Sphere} d_\Sphere(y,z)^p \, \dd \gamma(y,z)}^{1/p},
		& \text{for $1 \leq p < \infty$,}
		\\
		\;
		\displaystyle		
		\inf_{\gamma}
		\esssup_{(y,z) \in \supp (\gamma)} d_\Sphere(y,z),
		& \text{for $p = \infty$,}		
	\end{cases}
\end{align*}
where in each case the infimum runs over all transport plans $\gamma$ between $\mu_1$ and $\mu_2$. 
\end{definition}

We can now estimate the Wasserstein distance between shifts of a measure $\mu$:

\begin{lemma}\label{lem:WassersteinDistance}
Let $\mu$ be a Borel probability measure on $\Sphere$.
Then for all $w_1$ and $w_2 \in \Disk$, we have the following estimate for the $p$-Wasserstein distance with respect to the angular distance function $d_\Sphere$ on $\Sphere$:
\begin{align}
	W_p((\ext \Shift_{w_1})_\push \mu,(\ext \Shift_{w_2})_\push \mu) \leq 2 \, d_g(w_1, w_2)
	\quad
	\text{for all $p \in \intervalcc{1,\infty}$}.
	\label{eq:WassersteinDistance}
\end{align}
\end{lemma}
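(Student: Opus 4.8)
The plan is to bound $W_p$ by exhibiting a single transport plan that already does the job for the sup-norm ($p=\infty$) estimate, since any bound on $W_\infty$ automatically bounds $W_p$ for all finite $p$ (the $d_\Sphere$-diameter of $\Sphere$ is finite, and $W_p \le W_\infty$ on a probability space). The natural plan is the push-forward of $\mu$ under the map $x \mapsto (\ext\Shift_{w_1}(x), \ext\Shift_{w_2}(x))$ from $\Sphere$ into $\Sphere \times \Sphere$; concretely, set $\gamma \ceq (\ext\Shift_{w_1}, \ext\Shift_{w_2})_\push \mu$. Its two marginals are exactly $(\ext\Shift_{w_1})_\push\mu$ and $(\ext\Shift_{w_2})_\push\mu$, so it is an admissible transport plan.

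Next I would compute the relevant cost. For $1 \le p < \infty$,
\begin{align*}
	W_p((\ext\Shift_{w_1})_\push\mu, (\ext\Shift_{w_2})_\push\mu)^p
	\leq
	\iint_{\Sphere\times\Sphere} d_\Sphere(y,z)^p \, \dd\gamma(y,z)
	=
	\int_\Sphere d_\Sphere\bigparen{\ext\Shift_{w_1}(x), \ext\Shift_{w_2}(x)}^p \, \dd\mu(x),
\end{align*}
and for $p = \infty$ the analogous bound replaces the integral by an essential supremum over $x \in \supp\mu$. Now \cref{lem:shift distance} gives the pointwise estimate $d_\Sphere(\ext\Shift_{w_1}(x), \ext\Shift_{w_2}(x)) \le 2\, d_g(w_1, w_2)$ for every $x \in \Sphere$ — note this bound is uniform in $x$, which is exactly what makes it survive both the $L^p$ integration (against a probability measure) and the $L^\infty$ essential supremum. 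Plugging this in yields $W_p(\cdot,\cdot)^p \le (2\,d_g(w_1,w_2))^p$ for finite $p$ and $W_\infty(\cdot,\cdot) \le 2\,d_g(w_1,w_2)$, hence \eqref{eq:WassersteinDistance} in all cases after taking $p$-th roots.

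There is essentially no obstacle here: the only thing to be slightly careful about is confirming that $(\ext\Shift_{w_1}, \ext\Shift_{w_2})_\push\mu$ genuinely has the right marginals (immediate from the definition of push-forward under the coordinate projections $\pr_1, \pr_2$, since $\pr_i \circ (\ext\Shift_{w_1},\ext\Shift_{w_2}) = \ext\Shift_{w_i}$) and that $\ext\Shift_{w_i}$ is Borel measurable (it is continuous on $\Sphere$ by \eqref{eq:SphereTrafo}), so the plan is well-defined. The substance of the lemma is entirely carried by the uniform pointwise bound of \cref{lem:shift distance}; this statement is just its "averaged" consequence. If one prefers to avoid the $p=\infty$ case separately, one can also note that $W_p \le W_\infty$ and handle only $p=\infty$, but writing out the one-line $L^p$ computation is cleaner and self-contained.
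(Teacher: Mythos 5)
Your proposal is correct and follows essentially the same route as the paper: both use the coupling $(\ext\Shift_{w_1},\ext\Shift_{w_2})_\push\mu$ as the transport plan and reduce everything to the uniform pointwise bound of \cref{lem:shift distance}. The only cosmetic difference is that you write out the $L^p$ cost integral for finite $p$, whereas the paper simply invokes $W_p\leq W_\infty$ and bounds the essential supremum; both are fine.
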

\begin{proof}
We abbreviate $\psi_i \ceq \ext \Shift_{w_i}$ and $\mu_i \ceq (\psi_i)_\push \mu =(\ext \Shift_{w_i})_\push \mu$, and we denote the projection onto  the $i$-th Cartesian factor of $\Sphere \times \Sphere$ by $\pi_i \colon  \Sphere \times \Sphere \to \Sphere$. If we let $\varPhi(x) = (\psi_1(x), \psi_2(x))$ and write $\nu = \varPhi_\push \mu$, then $(\pi_i)_\push \nu = (\pi_i)_\push \varPhi_\push \mu = (\pi_i \circ \varPhi)_\push \mu = (\psi_i)_\push \mu = \mu_i$. Thus $\nu$ is a transport plan between the $\mu_i$ and
\begin{align*}
	W_p(\mu_1, \mu_2)
	\leq
	W_\infty(\mu_1, \mu_2)
	\leq
	\esssup_{(y,z) \in \supp (\nu)} d_\Sphere(y,z)
	=
	\sup_{x \in \Sphere} d_\Sphere(\psi_1(x), \psi_2(x) ),
\end{align*}
and the result then follows from Lemma~\ref{lem:shift distance}.
\end{proof}

\section{Newton's Method with fixed step size}
\label{sec:newton with fixed stepsize}

We are now going to use Newton's method with fixed step size in the Riemannian manifold $\Disk$ with metric $g$ (c.f. \cite[Algorithm 1.2]{MR1895088}) to solve the equation \eqref{eq:ConformalBarycenter}.

\subsection{Basic algorithm}

Starting with an initial guess $w_0 \in \Disk$, we generate a sequence of $w_k \in \Disk$~by
\begin{align}
	v_{k} \ceq - \nabla F_\mu(w_k)^{-1} F_\mu(w_k)
	\qand
	w_{k+1} \ceq \exp_{w_k}(v_{k}).
	\label{eq:Newton1}	
\end{align}
In order to derive an expression for $\nabla F_\mu$, we first compute the covariant derivative $\nabla V_x$ with respect to the metric $g$. 
It helps to recall that if we have conformally equivalent Riemannian metrics $g(X,Y) = f(w) \, \bar g (X,Y)$ with conformal factor $f(w) >0$, then the corresponding Riemannian connections $\nabla$ and $\bar \nabla$ are related by 
\begin{equation}
	\nabla_X Y 
	= 
	\bar \nabla_X Y + (2 \,f)^{-1} \bigparen{ 
		(X f) \, Y + (Y f) \, X - \bar g(X,Y) \, \operatorname{grad}_{\bar g} f 
	},
\end{equation}
where $\operatorname{grad}_{\bar g} f$ obeys $X f = \bar g(X,\operatorname{grad}_{\bar g} f)$ (cf.~\cite{MR1138207}, p.\ 181). 
By \eqref{eq:metric}, our  metric $g$ is conformally equivalent to the metric $\bar g = \ninnerprod{\cdot,\cdot}$ on $\Disk$ with conformal factor $f(w) = 4 \, (1 - \nabs{w}^2)^{-2}$.
Since the covariant derivative $\bar \nabla$ coincides with the Fréchet derivative $D$, this (after a short computation) yields
\begin{equation}
	\nabla_X Y(w) 
	= 
	DY(w) \, X(w) 
	+ 
	\tfrac{2}{1-\nabs{w}^2} \, \bigparen{
		\ninnerprod{X,w} \, Y 
		+ \ninnerprod{Y,w} \, X 
		- \ninnerprod{X,Y} \, w 
	}.
\label{eq:covariant derivative}
\end{equation}
Notice that the two covariant derivatives coincide at $w=0$. Differentiating~\eqref{eq:vxw}, it is straight-forward to compute
\begin{align*}
	\nabla V_x (0)
	= D V_x(0)
	= x \, x\transp - \id_H
	= V_x(0) \otimes V_x(0)^\flat - \id_{\TangentBundle_0 \Disk}.
\end{align*}
To compute $\nabla V_x(w)$, we can either pull back the computation to $w=0$ using covariance or use~\eqref{eq:covariant derivative} directly at $w$ and simplify. Either way, we obtain
\begin{align}
	\nabla V_x = V_x \otimes V_x^\flat - \id_{\TangentBundle \Disk}.
	\label{eq:NablaV}
\end{align}
Differentiating~\eqref{eq:ConformalBarycenter} under the integral sign, we derive from~\eqref{eq:NablaV} that
\begin{align}
	\textstyle
	\nabla F_\mu
	= 
	\int_{x \in \Sphere} \bigparen{ V_x \otimes V_x^\flat} \, \dd \mu(x) - \id_{\TangentBundle_w \Disk}
	.
\label{eq:F and nabla F}
\end{align}

\subsection{Shifted algorithm}
When performing \eqref{eq:Newton1} by computing \eqref{eq:vxw}, \eqref{eq:NablaV}, and $\exp_{w_k}(v_k)$
in machine arithmetic, one is frequently confronted with catastrophic loss of precision. 
We can avoid this by using an idea of Abikoff and Ye~\cite{MR1476978}: shift everything at point $w_k$ to the Euclidean origin, perform the Newton update there, and shift everything back to $w_k$. 
This gives us:
\begin{align}
	\mu_k \ceq (\ext \Shift_{w_k})_\push \mu,
	\;\;
	u_{k} &\ceq - \nabla F_{\mu_k}(0)^{-1} F_{\mu_k}(0),
	\;\;\text{and}\;\;
	w_{k+1} = \Shift(-w_k,\exp_0(u_k)).
	\label{eq:Newton2}	
\end{align}
Indeed, with $\varphi_k \ceq \Shift_{(-w_k)}$ and by \eqref{eq:TransformationRulesforF}, the search directions $v_k$ and $u_k$ are related as follows:
\begin{align*}
	v_k 
	&= - \bigparen{\nabla F_{\mu}^{-1}\, F_{\mu} }\at_{w_k}
	= - \bigparen{
		\nparen{ \nabla F_{(\ext \varphi_k)_\push \mu_k} \circ \varphi_k}^{-1}
		\, 
		\nparen{F_{(\ext \varphi_k)_\push \mu_k} \circ \varphi_k}	
	} \at_0
	\\
	&= - \bigparen{
		\nparen{\dd \varphi_k\, \nabla F_{\mu_k} \, \dd \varphi_k^{-1}}^{-1} 
		\, 
		\dd \varphi_k \, F_{\mu_k}
	}\at_0
	= - \bigparen{
		\dd \varphi_k \,  \nabla F_{\mu_k}^{-1} \,  F_{\mu_k}
	}\at_0
	= \dd \varphi_k(0) \,  u_k.	
\label{eq:vkReconstructionold}	
\end{align*}
Together with $\exp_{w_k}(v_k) = \exp_{\varphi_k(0)}( \dd \varphi_k \,u_k) = \varphi_k(\exp_0(u_k))$,
this shows that both \eqref{eq:Newton1} and \eqref{eq:Newton2}	produce the same new iterate $w_{k+1}$---at least in exact arithmetic.

Computing $F_{\mu_k}(0)$ and $\nabla F_{\mu_k}(0)$ is particularly easy as we have
\begin{align*}
	\textstyle
	F_{\mu_k}(0) = 
	\frac{1}{2} \int_\Sphere x \, \dd \mu_k(x)
	\qand
	\nabla F_{\mu_k}(0)
	=
	DF_{\mu_k}(0)
	= 
	\int_\Sphere  (x \, x\transp)  \, \dd \mu_k(x) - \id_H.
	\label{eq:NablaF0}
\end{align*}
The exponential map $\exp_0 \colon\! \TangentBundle_0 \Disk \to \Disk$ can be computed according to \eqref{eq:RadialGeodesic} by
\begin{align*}
	\exp_0(u_k)
	= \tfrac{\tanh(\nabs{u_k})}{\nabs{u_k}} \, u_k
%	= \frac{\tanh(\nabs{u_k}_g/2)}{\nabs{u_k}_g/2} \, u_k
	.
\end{align*}
 
Up to this point, the measure $\mu$ and the Hilbert space $H$ have been arbitrary. We could have given~$\mu$ by any density supported on $\Sphere$, 
computed $F_{\mu_k}(0)$ and $\nabla F_{\mu_k}(0)$
by numerical integration, and  computed $\mu_{k+1}$ by the transformation formula for measures.
However, for finite, discrete measures, the pushforward is much easier to compute as we see in the following example:

\begin{example}\label{ex:DiscreteMeasure}

Let $H = \R^\AmbDim$ be a finite-dimensional Euclidean space and let $\mu$ be given as a linear combination of Dirac measures 
$\mu = \sum_{i=1}^n \omega_i \, \updelta(x_{i})$ with $x_{i} \in \Sphere^{\AmbDim-1}$, $\omega_i \geq 0$, and $\sum_{i=1}^n \omega_i =1$. 
Then, noting that at $w=0$, we have $V_{x}(w) = \frac{1}{2} x$ and $u^\flat(v) = g(u,v) = 4 \, \ninnerprod{u,v}_{\R^\AmbDim} = 4 \, u\transp \, v$, we get
\begin{align*}
	\textstyle
	F_{\mu}(0) = 
	\frac{1}{2} \sum_{i=1}^n x_{i} \, \omega_i
	\qand
	\nabla F_{\mu}(0)
	= 
	\bigparen{\sum_{i=1}^n (x_{i} \,x_{i}\transp) \, \omega_i } -\id_{\R^\AmbDim}.
\end{align*}
The push-forward $(\Shift_{s})_\pull \, \mu$ of the measure $\mu$ has the same weights $\omega_i$, but the locations of its Dirac measures are transformed by $\Shift_s$. It can be written as
\begin{align*}
	\textstyle
	(\Shift_s)_{\pull} \, \mu
	=
	\sum_{i=1}^n \omega_i \, \updelta(\Shift(s,x_{i})).
\end{align*}
\end{example}

\subsection{Statement of first convergence theorem}

We will see now that Ferreira and Svaiter's Kantorovich theorem on Riemannian manifolds~\cite[Theorem~3.2]{MR1895088} (see also \autoref{theo:KantorovichRiemannian} below) immediately yields a useful result:

\begin{theorem}\label{thm:quadratic convergence}
Denote the smallest eigenvalue of $(-\nabla F_\mu(w_0))$ by  $\lambda_{\min}$ and suppose it is greater then $0$. 
Let $w_k$ be the iterates of the usual Newton iteration~\eqref{eq:Newton1} 
or the ones obtained from the shifted Newton iteration~\eqref{eq:Newton2}.

Suppose that the following \emph{Newton-Kantotovich condition} is satisfied:
\begin{align}
	q \ceq 4 \, \nabs{F_\mu(w_0)}_g/\lambda_{\min}^2  < 1.
	\label{eq:NKCondition}
\end{align}
Then there exists a conformal barycenter $w_*(\mu)$ of $\mu$,
the measure $\mu_* \ceq (\ext \Shift_{w_*(\mu)})_\push \mu$ satisfies $\int_\Sphere \mu_* =0$,
and the iterates $w_k$ and $\mu_k$ converge with
\begin{equation}
	d_g(w_k , w_*(\mu))
	\leq  
	\tfrac{1}{2}\, \lambda_{\min} \, q^{(2^k)}
	\qand
	W_p(\mu_k , \mu_*)
	\leq  
	\lambda_{\min} \, q^{(2^k)},
	\label{eq:ConvergenceRate}
\end{equation}
where $W_p$ denotes the $p$-Wasserstein distance for $p \in \intervalcc{1,\infty}$.
\end{theorem}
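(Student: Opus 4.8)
The theorem is essentially a direct application of Ferreira–Svaiter's Riemannian Kantorovich theorem (cited as \cite[Theorem~3.2]{MR1895088}), so the bulk of the work is verifying its hypotheses and translating its conclusion into the stated bounds. First I would recall the precise statement of the Ferreira–Svaiter theorem: for a $C^1$ vector field $F$ on a Riemannian manifold, an initial point $w_0$ at which $\nabla F(w_0)$ is invertible, a Lipschitz-type constant $L$ for $\nabla F$ along geodesics (measured with respect to $g$), and the quantities $\beta \ceq \nnorm{\nabla F(w_0)^{-1}}_g$ and $\eta \ceq \nnorm{\nabla F(w_0)^{-1} F(w_0)}_g$, the condition $h \ceq \beta L \eta \leq 1/2$ guarantees existence of a zero $w_*$ within a computable radius and quadratic convergence of the Newton iterates with an explicit majorizing sequence. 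So the three things to pin down are: (i) an affine-invariant Lipschitz constant $L$ for $\nabla F_\mu$; (ii) the identification of $\beta$ and $\eta$ with the quantities appearing in~\eqref{eq:NKCondition}; and (iii) the arithmetic that turns $h \leq 1/2$ into $q < 1$ and the majorizing sequence into~\eqref{eq:ConvergenceRate}.

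For step (i), the key observation is that from~\eqref{eq:NablaV} we have $\nabla V_x = V_x \otimes V_x^\flat - \id$, and since $\abs{V_x}_g = 1$ the operator $\nabla V_x$ has $g$-operator norm bounded by a universal constant; more importantly its covariant derivative (the second covariant derivative of $V_x$) is bounded uniformly over $\Disk \times \Sphere$ by an absolute constant. Integrating against $\mu$ preserves such a bound, so $\nabla F_\mu$ is globally Lipschitz along $g$-geodesics with an absolute constant $L$; the cleanest route is to show $L = 2$ works (this is the natural constant given the factor $2$ relating $d_\Sphere$-distances to $d_g$-distances in \cref{lem:shift distance} and \cref{lem:WassersteinDistance}). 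For step (ii): $-\nabla F_\mu(w_0)$ is self-adjoint with smallest eigenvalue $\lamda_{\min}>0$, hence invertible with $\nnorm{\nabla F_\mu(w_0)^{-1}}_g = 1/\lamda_{\min}$, giving $\beta = 1/\lamda_{\min}$; and $\eta = \nnorm{\nabla F_\mu(w_0)^{-1} F_\mu(w_0)}_g \leq \beta \abs{F_\mu(w_0)}_g = \abs{F_\mu(w_0)}_g/\lamda_{\min}$. Therefore $h = \beta L \eta \leq L \abs{F_\mu(w_0)}_g / \lamda_{\min}^2 = 2\abs{F_\mu(w_0)}_g/\lamda_{\min}^2 = q/2$, so $q<1$ forces $h < 1/2$ and the Kantorovich hypothesis holds.

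For step (iii), I would feed $h = q/2$ into the Ferreira–Svaiter majorizing sequence. Their root-finding scalar majorant is the quadratic $\phi(t) = \frac{L}{2}t^2 - t/\beta + \eta$ (or the normalized version), whose Newton iterates $t_k$ dominate $d_g(w_k, w_*)$; the standard closed-form estimate for the Newton–Kantorovich majorant under $h \leq 1/2$ gives $t_k - t_* \lesssim \frac{1}{L\beta}(2h)^{2^k}$ up to the usual geometric-series bookkeeping, which with $\beta = 1/\lamda_{\min}$, $L = 2$, $2h = q$ yields exactly $d_g(w_k, w_*) \leq \frac12 \lamda_{\min} q^{2^k}$. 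The Wasserstein bound then follows immediately: \cref{lem:WassersteinDistance} gives $W_p(\mu_k, \mu_*) = W_p((\ext\Shift_{w_k})_\push \mu, (\ext\Shift_{w_*})_\push\mu) \leq 2 d_g(w_k, w_*) \leq \lamda_{\min} q^{2^k}$, valid for all $p \in \intervalcc{1,\infty}$ by the lemma. The claim $F_{\mu_*}(0) = 0$ (what the excerpt writes as $\int_\Sphere \mu_* = 0$) is just the transformation rule~\eqref{eq:TransformationRulesforF} applied at $w_*$, exactly as already spelled out in \cref{sec:ShiftTransformation}. Finally, the equivalence of the iterates produced by~\eqref{eq:Newton1} and~\eqref{eq:Newton2} in exact arithmetic was already established, so there is nothing further to prove there.

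**Main obstacle.** The only genuinely delicate point is step (i): obtaining the Lipschitz constant for $\nabla F_\mu$ in the \emph{intrinsic} $g$-geometry with the \emph{sharp} (or at least clean) constant $L = 2$. One must be careful that "Lipschitz" here means with respect to parallel transport along $g$-geodesics, so one cannot simply differentiate the Euclidean formula~\eqref{eq:vxw}; the affine-covariant bound should be derived either by computing $\nabla(\nabla V_x) = \nabla(V_x \otimes V_x^\flat)$ using~\eqref{eq:NablaV} and~\eqref{eq:covariant derivative} and bounding its $g$-norm, or—more slickly—by exploiting isometry-equivariance to reduce the estimate to the point $w=0$ and using that the director field is a smooth section of the unit tangent bundle with controlled derivatives there. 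If the clean constant is elusive, any explicit absolute constant $L$ still gives a theorem of the same shape with $q$ replaced by $\tfrac{L}{2}\cdot\tfrac{2\abs{F_\mu(w_0)}_g}{\lamda_{\min}^2}$; but matching~\eqref{eq:NKCondition} as stated requires $L=2$.
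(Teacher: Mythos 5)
Your proposal is correct and follows essentially the same route as the paper: apply the Ferreira--Svaiter Riemannian Kantorovich theorem with $a=1/\lambda_{\min}$, $b\leq\nabs{F_\mu(w_0)}_g/\lambda_{\min}$, and the intrinsic Lipschitz constant $L=2$ obtained by bounding $\nnorm{\nabla\nabla V_x}_{L^\infty_g}\leq 2$ via the identity $\nabla V_x = V_x\otimes V_x^\flat-\id$ and the Leibniz rule, then transfer the distance bound to the Wasserstein bound through \cref{lem:WassersteinDistance}. The only point you leave implicit --- and which the paper makes explicit because Hopf--Rinow fails in infinite dimensions --- is that one may take $U=\Disk$, which is geodesically convex and complete, so the localized completeness hypotheses of the cited theorem are satisfied.
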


The following is a direct consequence of this theorem:
\begin{corollary}\label{cor:time bound under NK conditions}
Let $H = \R^{\AmbDim}$ and denote the Euclidean norm by $\nabs{\cdot}$.
Suppose $\mu = \sum_{i=1}^n \omega_i \, \updelta(x_i)$ with $\omega_i > 0$, $\sum_{i=1}^n \omega_i = 1$.
Denote by $\lambda_{\min}$ the smallest eigenvalue of the matrix
$A \ceq \id_{\R^{\AmbDim}} - \sum_{i=1}^n (x_{i} \, x_{i}\transp) \, \omega_i$
and by $w_{\operatorname{cm}} = \sum_{i=1}^n \omega_i \, x_i$ the center of mass of $\mu$.
Suppose that $q \ceq 4 \, \nabs{w_{\operatorname{cm}}}/\lambda_{\min}^2 < 1$ and $w_0 \ceq 0$.
Then for any $\varepsilon > 0$, 
each of the algorithms \eqref{eq:Newton1}~and~\eqref{eq:Newton2} 
reduces
the hyperbolic distance between $w_k$ and the conformal baycenter $w_*(\mu)$ 
and 
the $p$-Wasserstein distance between $\mu_k$ and the centralized measure $\mu_*$
to less than $\varepsilon$ in at most $k \leq \lceil \log_2 \nabs{\log_2 \varepsilon} - \log_2 \nabs{\log_2 q} \rceil$ iterations.
For this, they require $O(k\,(d^2 n + d^3))$ time and $O(d \, n+d^2)$ memory.
\end{corollary}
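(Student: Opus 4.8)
The plan is to specialize \cref{thm:quadratic convergence} to this discrete $\mu$ with starting point $w_0 \ceq 0$, and then to bound separately the number of iterations and the cost of a single iteration. First I would check that the hypotheses of \cref{thm:quadratic convergence} reduce to the ones stated here. By the formulas of \cref{ex:DiscreteMeasure}, at $w_0 = 0$ we have $-\nabla F_\mu(0) = \id_{\R^{\AmbDim}} - \sum_{i=1}^n \omega_i\, x_i x_i\transp = A$, so the $\lambda_{\min}$ of this corollary is exactly the $\lambda_{\min}$ of \cref{thm:quadratic convergence}; since each $x_i x_i\transp$ is an orthogonal projection and $\sum_i \omega_i = 1$, the matrix $\sum_i \omega_i\, x_i x_i\transp$ has spectrum in $\nintervalcc{0,1}$, so $A$ is positive semidefinite with $\lambda_{\min} \in \nintervalcc{0,1}$, and the hypothesis $q = 4\,\nabs{w_{\operatorname{cm}}}/\lambda_{\min}^2 < 1$ in particular forces $\lambda_{\min} \in \nintervaloc{0,1}$. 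Also $F_\mu(0) = \tfrac12\sum_i \omega_i\, x_i = \tfrac12 w_{\operatorname{cm}}$, while by~\eqref{eq:metric} the $g$-length at the origin is $\nabs{X}_g = 2\,\nabs{X}$, so $\nabs{F_\mu(0)}_g = \nabs{w_{\operatorname{cm}}}$; hence condition~\eqref{eq:NKCondition} is precisely the assumed $q < 1$. \cref{thm:quadratic convergence} then yields the conformal barycenter $w_*(\mu)$, the centralization of $\mu_*$, and the rates $d_g(w_k, w_*(\mu)) \le \tfrac12\, \lambda_{\min}\, q^{(2^k)}$ and $W_p(\mu_k, \mu_*) \le \lambda_{\min}\, q^{(2^k)}$.

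Next I would turn these rates into the iteration count. Because $\lambda_{\min} \le 1$, both error bounds are bounded by $q^{(2^k)}$, so it is enough to reach $q^{(2^k)} \le \varepsilon$; this already holds at $k = 0$ if $\varepsilon \ge q$, and otherwise $\log_2 q$ and $\log_2 \varepsilon$ are both negative and $q^{(2^k)} \le \varepsilon$ is equivalent to $2^k \ge \nabs{\log_2\varepsilon}/\nabs{\log_2 q}$, i.e.\ to $k \ge \log_2\nabs{\log_2\varepsilon} - \log_2\nabs{\log_2 q}$. Thus $k \le \lceil \log_2\nabs{\log_2\varepsilon} - \log_2\nabs{\log_2 q}\rceil$ iterations suffice, for either~\eqref{eq:Newton1} or~\eqref{eq:Newton2}.

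Finally I would tally the cost of one step of the shifted iteration~\eqref{eq:Newton2}. Evaluating $\mu_k = (\ext\Shift_{w_k})_\push \mu$ applies the explicit formula~\eqref{eq:SphereTrafo} to each of the $n$ atoms at $O(\AmbDim)$ cost apiece, hence $O(\AmbDim\, n)$; forming $F_{\mu_k}(0) = \tfrac12\sum_i \omega_i\,(\ext\Shift_{w_k}(x_i))$ is $O(\AmbDim\, n)$; forming $\nabla F_{\mu_k}(0) = \sum_i \omega_i\,(\ext\Shift_{w_k}(x_i))(\ext\Shift_{w_k}(x_i))\transp - \id_{\R^{\AmbDim}}$ costs $O(\AmbDim^2)$ per rank-one update, hence $O(\AmbDim^2 n)$; solving the $\AmbDim\times\AmbDim$ linear system for $u_k$ costs $O(\AmbDim^3)$; and forming $\exp_0(u_k)$ via~\eqref{eq:RadialGeodesic} together with $w_{k+1} = \Shift(-w_k,\exp_0(u_k))$ via~\eqref{eq:ShiftTrafo} costs $O(\AmbDim)$. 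So one step is $O(\AmbDim^2 n + \AmbDim^3)$ and $k$ of them cost $O(k\,(\AmbDim^2 n + \AmbDim^3))$, while one needs only to store the $n$ weighted atoms ($O(\AmbDim\, n)$), the matrix $\nabla F_{\mu_k}(0)$ ($O(\AmbDim^2)$), and $w_k, u_k$ ($O(\AmbDim)$), i.e.\ $O(\AmbDim\, n + \AmbDim^2)$ memory; the same counts hold for~\eqref{eq:Newton1}, which produces the same iterates. I do not expect any real obstacle here: the only two points that need care are the metric rescaling giving $\nabs{F_\mu(0)}_g = \nabs{w_{\operatorname{cm}}}$ (so that~\eqref{eq:NKCondition} matches the stated $q$) and the elementary logarithm estimate for the iteration count.
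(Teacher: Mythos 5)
Your proposal is correct and follows essentially the same route as the paper: verify that at $w_0=0$ the quantities $F_\mu(0)=\tfrac12 w_{\operatorname{cm}}$, $\nabs{F_\mu(0)}_g=\nabs{w_{\operatorname{cm}}}$, and $-\nabla F_\mu(0)=A$ reduce the NK condition of \cref{thm:quadratic convergence} to the stated $q<1$, take logarithms of the resulting error bound to get the iteration count, and tally the per-iteration arithmetic and storage. The only differences are cosmetic — you spell out the edge case $\varepsilon\geq q$ and the observation $\lambda_{\min}\leq 1$, which the paper leaves implicit.
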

\begin{proof} 
The first part of the corollary comes from plugging definitions into \cref{thm:quadratic convergence} for $w_0 = 0$: 
We observe that
$F_\mu(0) = \tfrac{1}{2} \, w_{\operatorname{cm}}$,
$\nabs{F_\mu(0)}_g = 2 \, \nabs{F_\mu(0)} = \nabs{w_{\operatorname{cm}}}$,
and
$-\nabla F_\mu(0) = \id_{\R^{\AmbDim}} - \sum_{i=1}^n (x_{i} \, x_{i}\transp)$.
Hence \cref{thm:quadratic convergence} implies the error bounds \eqref{eq:ConvergenceRate}.
By taking logarithms, we see that $\lambda_{\min} \, q^{(2^k)} < \varepsilon$ as soon as $k \geq \log_2 \nabs{\log_2 (\varepsilon)} - \log_2 \nabs{\log_2 (q)}$.

In each iteration, we have (i)~to compute either $F_\mu(w_k)$ or $\mu_k$ and $F_{\mu_k}(0)$ which both take $O(d \, n)$ time;
(ii)~to compute $\nabla F_\mu(w_k)$ or $\nabla F_{\mu_k}(0)$ which both take $O(d^2 \, n)$ time;
(iii)~solve a linear equation with a matrix of size $d \times d$ which can be performed in $O(d^3)$ time.
We require $O(d\,n)$ memory for storing $\mu$ and $\mu_k$ and
$O(d^2)$ memory to store the $d \times d$ matrix and and for computing its inverse.
\end{proof}

We note that for $\varepsilon = 10^{-16}$ and $q = 0.99$, at most 12 iterations will be required, and for $\varepsilon = 10^{-16}$ and $q = 0.5$, 6 iterations will suffice. 
Thus, the Newton iteration \eqref{eq:Newton1} and the (shifted) Newton iteration~\eqref{eq:Newton2}
are quite efficient when our hypotheses hold. However, we have to point out that accuracy this high
is not obtainable in floating point arithmetic if the conformal barycenter lies close to the boundary of $\Disk$; this is due to rapid precision loss in the computation of $\sigma(w,x)$ as $w \to x \in \Sphere$.

\subsection{Proof of first convergence theorem; Newton-Kantorovich theorem on manifolds}

We now turn to the proof of \cref{thm:quadratic convergence}. We first state a (slightly modified) version of the Kantorovich theorem on a Riemannian manifold $(M,g)$~\cite[Theorem~3.2]{MR1895088}.
The proof can by copied from~\cite{MR1895088} almost word-by-word, but instead of relying on the Hopf-Rinow theorem (which is false for infinite-dimensional Riemannian manifolds, see \cite{MR400283}), we state the required completeness conditions explicitly, but in localized form. We will use $\nabs{\cdot}_g$ for the norm given by $g$ for vectors on $T_x M$ and $\nnorm{\cdot}_g$ for the corresponding operator norm\footnote{For a linear operator $A \colon T_x M \to T_xM$, it is given by $\nnorm{A}_g = \sup_{u \in T_x M\setminus \{0\}} \nabs{A \,u}_g/ \nabs{u}_g$.}  for linear maps $A \!:\! T_x M \rightarrow T_x M$. 

\begin{theorem}\label{theo:KantorovichRiemannian}
Let $(M,g)$ be a (not necessarily finite-dimensional) Riemannian manifold,
let $U \subset M$ be an open set such that 
$U$ is geodesically convex\footnote{This means that every two points $x$, $y \in U$ are connected by a unique length-minimizing geodesic that is contained in $U$. In particular that guarantees that
for every open ball $\OpenBall{x}{\varrho} \subset U$, the Riemannian exponential map $\exp_x \colon \set{u \in T_x M | \nabs{u}_g < \varrho} \to U$ is a well defined diffeomorphism onto $\OpenBall{x}{\varrho}$. 
%This is crucial for the well-definedness of Newton updates.
} 
and 
$\bar U$ is complete with respect to the geodesic distance $d_g$.	
Let $F \in C^{1}(\bar U;TM)$ be a continuously differentiable vector field
and
let $w_0 \in U$ be a given point where the covariant derivative $\nabla F(w_0)  \colon \TangentBundle_{w_0}M \to \TangentBundle_{w_0}M $ is invertible and its inverse is a bounded operator on the Hilbert space $\TangentBundle_{w_0} M$.
Suppose that there are constants $a> 0$, $b \geq 0$, and $L \geq 0$ such that
\begin{equation}
\nnorm{\nabla F(w_0)^{-1}}_g \leq a
\quad\text{and}\quad
\nabs{\nabla F(w_0)^{-1}F(w_0)}_g\leq b
\label{eq:Kantorovich starting conditions}
\end{equation}
and $\nabla F$ obeys the Lipschitz condition that for each two $z$, $w \in U$, one has
\begin{equation}
\nnorm{P_\gamma (\nabla F(w)) - \nabla F(z)}_g \leq L \, d_g(w,z),
\label{eq:Kantorovich Lipschitz condition}
\end{equation}
where $P_\gamma \colon \End(\TangentBundle_w \Disk) \to \End(\TangentBundle_z \Disk)$ is the parallel transport along the (unique) minimizing geodesic $\gamma$ from $w$ to $z$ in~$U$. 
Further, suppose we define the auxiliary constants
\begin{equation*}
	r \ceq \tfrac{1}{a \, L}, 
	\quad
	q \ceq 2 \, a \, b \, L < 1,
	\qand
	r_{-} \ceq r \, \bigparen{1 - \sqrt{1-q}}
	,
\end{equation*}
and that we have $\OpenBall{w_0}{r_-} \subset U$.

Then the sequence of Newton iterates
$
	w_{k+1} \ceq \exp_{w_{k}} \bigparen{-\nabla F(w_{k})^{-1} F(w_{k})}
$
is well-defined and contained in $\OpenBall{w_0}{r_-}$ for all $k$. 
They converge to some $w_* \in \ClosedBall{w_0}{r_-}$ with $F(w_*) = 0$ and
\begin{align*}
	\textstyle
	d_g(w_k,w_*)
	\leq 
	\frac{2 \, b}{q} \, q^{(2^k)}
	=\frac{1}{a \, L} \, q^{(2^k)}.
\end{align*}
Last, the point $w_*$ is the unique zero of $F$ in $\ClosedBall{w_0}{r_-}$.
\end{theorem}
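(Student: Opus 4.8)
The plan is to follow Ferreira and Svaiter's majorant proof of Kantorovich's theorem closely, replacing every use of the Hopf--Rinow theorem by the two structural hypotheses we have imposed: geodesic convexity of $U$ — which supplies, for any two points of $U$, a unique minimizing geodesic lying inside $U$, and (via the footnote) makes $\exp_x$ a diffeomorphism on every ball contained in $U$ — and completeness of $(\bar U, d_g)$. Global geodesic completeness of $M$ is never needed. As usual, the argument is driven by the scalar \emph{majorant} $f(t) \ceq \tfrac{L}{2}\, t^{2} - \tfrac{1}{a}\, t + \tfrac{b}{a}$ on $\intervalco{0,\infty}$, whose roots are exactly $r_{\pm} = r\,(1 \pm \sqrt{1-q})$, with $f' < 0$ on $\intervalco{0,r}$ and $f'' \equiv L$. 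The real Newton iteration $t_{0} \ceq 0$, $t_{k+1} \ceq t_{k} - f(t_{k})/f'(t_{k})$ increases monotonically to $r_{-}$, stays in $\intervalco{0,r_{-}}$, obeys the classical quadratic bound $r_{-} - t_{k} \le \tfrac{2b}{q}\, q^{(2^{k})}$, and satisfies $f(t_{k+1}) = \tfrac{L}{2}\,(t_{k+1}-t_{k})^{2}$ since $f(t_{k}) + f'(t_{k})\,(t_{k+1}-t_{k}) = 0$.

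The heart of the proof is a comparison induction: I would show that for every $k$ the iterate $w_{k}$ is well defined, lies in $\OpenBall{w_0}{r_-}$, and satisfies
\begin{align*}
	d_g(w_0, w_k) \le t_k,
	\qquad
	\nnorm{\nabla F(w_k)^{-1}}_g \le -\tfrac{1}{f'(t_k)},
	\qquad
	\nabs{F(w_k)}_g \le f(t_k),
\end{align*}
together with $d_g(w_k, w_{k+1}) \le t_{k+1} - t_k$. The invertibility estimate comes from the Banach lemma on the Hilbert space $\TangentBundle_{w_k} M$: conjugating $\nabla F(w_k)$ by parallel transport along the minimizing geodesic from $w_0$ to $w_k$ (which lies in $U$ by geodesic convexity) and invoking the Lipschitz condition~\eqref{eq:Kantorovich Lipschitz condition} together with~\eqref{eq:Kantorovich starting conditions} gives $\nnorm{\nabla F(w_0)^{-1}\bigparen{P_\gamma \nabla F(w_k) - \nabla F(w_0)}}_g \le a L\, d_g(w_0,w_k) \le aL\,t_k < aLr = 1$, so a Neumann series yields invertibility with the stated bound. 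Hence the Newton direction $v_k \ceq -\nabla F(w_k)^{-1} F(w_k)$ obeys $\nabs{v_k}_g \le \bigparen{-1/f'(t_k)}\, f(t_k) = t_{k+1} - t_k < r_- - d_g(w_0,w_k)$; thus $v_k$ lies in the ball of $\TangentBundle_{w_k}M$ on which $\exp_{w_k}$ is, by the footnote, a diffeomorphism onto a metric ball contained in $\OpenBall{w_0}{r_-} \subset U$. Consequently $w_{k+1} = \exp_{w_k}(v_k)$ is well defined, lies in $\OpenBall{w_0}{r_-}$, and is joined to $w_k$ by a minimizing geodesic $\gamma_k$ of length $\nabs{v_k}_g$ inside $U$. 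Applying the fundamental theorem of calculus to the parallel-transported vector field $t \mapsto P\, F(\gamma_k(t))$ — legitimate on a strongly Riemannian Hilbert manifold — and using $\nabla F(w_k)\, v_k = -F(w_k)$ together with~\eqref{eq:Kantorovich Lipschitz condition} produces $\nabs{F(w_{k+1})}_g \le \tfrac{L}{2}\,\nabs{v_k}_g^2 \le \tfrac{L}{2}\,(t_{k+1}-t_k)^2 = f(t_{k+1})$, while $d_g(w_0,w_{k+1}) \le t_k + (t_{k+1}-t_k) = t_{k+1}$. This closes the induction.

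Existence and the rate now follow quickly. From $d_g(w_k, w_{k+m}) \le \sum_{j=k}^{k+m-1} (t_{j+1}-t_j) = t_{k+m} - t_k \le r_- - t_k$ we see that $(w_k)_k$ is Cauchy inside $\ClosedBall{w_0}{r_-} \subset \bar U$, so by completeness of $(\bar U, d_g)$ it converges to some $w_* \in \ClosedBall{w_0}{r_-}$, with $F(w_*) = 0$ by continuity of $F$; letting $m \to \infty$ and inserting the scalar estimate gives $d_g(w_k, w_*) \le r_- - t_k \le \tfrac{2b}{q}\, q^{(2^k)} = \tfrac{1}{aL}\, q^{(2^k)}$. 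For uniqueness, if $y \in \ClosedBall{w_0}{r_-}$ also satisfies $F(y) = 0$, let $\zeta$ be the minimizing geodesic in $U$ from $w_*$ to $y$ with initial velocity $\xi$, $\nabs{\xi}_g = d_g(w_*, y)$. The fundamental theorem of calculus gives $0 = P\,F(y) - F(w_*) = \hat A\, \xi$, where $\hat A \ceq \int_0^1 P_{w_*\leftarrow\zeta(t)}\, \nabla F(\zeta(t))\, P_{\zeta(t)\leftarrow w_*}\, \dd t$; the same convexity estimates as in~\cite{MR1895088} bound $\nnorm{\nabla F(w_0)^{-1}(\hat A - \nabla F(w_0))}_g$ by $aL\, r_- = 1 - \sqrt{1-q} < 1$, so $\hat A$ is invertible and $\xi = 0$, i.e.\ $y = w_*$.

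The main obstacle — and essentially the only place where work beyond transcribing~\cite{MR1895088} is required — is the bookkeeping in the comparison induction: one must certify at every stage that the Newton update $\exp_{w_k}(v_k)$, and the comparison geodesics used in the uniqueness step, never leave the region $U$ on which exponential maps are diffeomorphisms and minimizing geodesics are unique. The auxiliary constants $r$, $r_-$ and the hypothesis $q < 1$ are calibrated precisely so that the a priori bounds $d_g(w_0, w_k) \le t_k < r_-$ and $\nabs{v_k}_g \le t_{k+1} - t_k$ force everything to stay inside $\OpenBall{w_0}{r_-}$; once this is secured, all remaining estimates are the Euclidean Kantorovich estimates carried out fibrewise on the Hilbert spaces $\TangentBundle_{w_k}M$ and transported along geodesics, so no finite-dimensionality is ever used.
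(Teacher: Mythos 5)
Your overall route is exactly the one the paper takes: the paper writes out no proof of this theorem at all, but states that Ferreira--Svaiter's majorant argument can be copied almost word-by-word once Hopf--Rinow is replaced by the localized hypotheses (geodesic convexity of $U$, completeness of $\bar U$), and your comparison induction is precisely that transcription. The existence, convergence and rate parts of your sketch are essentially correct; the only slip there is minor: the hypotheses bound $\nabs{\nabla F(w_0)^{-1}F(w_0)}_g$, not $\nabs{F(w_0)}_g$, so your induction statement $\nabs{F(w_k)}_g\le f(t_k)$ can fail at $k=0$. This is harmless --- take the base step directly as $\nabs{v_0}_g\le b=t_1-t_0$ (or induct on $\nabs{\nabla F(w_k)^{-1}F(w_k)}_g\le t_{k+1}-t_k$) and keep $\nabs{F(w_k)}_g\le f(t_k)$ only for $k\ge 1$, where your fundamental-theorem-of-calculus estimate produces it.

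The genuine gap is in the uniqueness paragraph. Your operator $\hat A=\int_0^1 P_{w_*\leftarrow\zeta(t)}\,\nabla F(\zeta(t))\,P_{\zeta(t)\leftarrow w_*}\,\dd t$ acts on $\TangentBundle_{w_*}M$ while $\nabla F(w_0)$ acts on $\TangentBundle_{w_0}M$, so the quantity $\nnorm{\nabla F(w_0)^{-1}(\hat A-\nabla F(w_0))}_g$ only makes sense after conjugating by a parallel transport from $w_0$ to $w_*$; but then the operators being compared involve the broken transport $P_{w_0\leftarrow w_*}\circ P_{w_*\leftarrow\zeta(t)}$, which is not the transport along the minimizing geodesic from $w_0$ to $\zeta(t)$, and the Lipschitz hypothesis \eqref{eq:Kantorovich Lipschitz condition} controls comparisons only along that single minimizing geodesic --- holonomy is not controlled by anything you have assumed. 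The quantitative claim also does not survive the transplant: the geodesic $\zeta$ from $w_*$ to $y$ need not stay inside $\ClosedBall{w_0}{r_-}$ (metric balls are not assumed convex), and the natural same-base-point repair --- compare $\hat A$ with $\nabla F(w_*)$ along $\zeta$, where \eqref{eq:Kantorovich Lipschitz condition} does apply, together with $\nnorm{\nabla F(w_*)^{-1}}_g\le a/\sqrt{1-q}$ --- yields $\nnorm{\nabla F(w_*)^{-1}(\hat A-\nabla F(w_*))}_g\le \tfrac{a\,L}{2\sqrt{1-q}}\,d_g(w_*,y)\le\tfrac{1-\sqrt{1-q}}{\sqrt{1-q}}$, which is less than $1$ only for $q<3/4$. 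So, as written, your argument does not give uniqueness in $\ClosedBall{w_0}{r_-}$ on the whole range $q<1$; for that clause you cannot simply invoke ``the same convexity estimates'' but must reproduce the actual uniqueness argument of \cite{MR1895088} (or supply another one) whose estimates are organized so that every application of \eqref{eq:Kantorovich Lipschitz condition} involves only the two endpoints of one minimizing geodesic and the transport along it.
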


To apply~\cref{theo:KantorovichRiemannian} to $F_\mu$, we start by proving that $\nabla F_\mu$ obeys indeed a Lipschitz condition with $L =2$.
\begin{proposition}\label{prop:nablaF_LipschitzContinuity}\label{prop:L is two}
For any $w$, $z \in \Disk$, let $\gamma \colon \intervalcc{0,1} \to \Disk$ be the unique minimizing geodesic joining $z$ and $w$ with $\gamma(0) = z$ and $\gamma(1) = w$ and let $P_{\gamma,a,b} \colon \End(\TangentBundle_{\gamma(a)} \Disk) \to \End(\TangentBundle_{\gamma(b)}\Disk) $ be the parallel transport along $\gamma$ from $\gamma(a)$ to $\gamma(b)$. Then
\begin{equation*}
	\textstyle
	\nnorm{P_{\gamma,1,0} (\nabla F_{\mu}(w))- \nabla F_{\mu}(z)}_g 
	\leq 
	2 \, d_g(z,w).
\end{equation*}
\end{proposition}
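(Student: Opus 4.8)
The plan is to reduce the Lipschitz estimate for $\nabla F_\mu$ to a pointwise Lipschitz estimate for the integrand $\nabla V_x = V_x \otimes V_x^\flat - \id_{\TangentBundle\Disk}$, established in \eqref{eq:NablaV}. Since parallel transport is linear and the identity field is parallel (so $P_{\gamma,1,0}(\id_{\TangentBundle_w\Disk}) = \id_{\TangentBundle_z\Disk}$), the identity terms cancel and it suffices to bound $\nnorm{P_{\gamma,1,0}(V_x(w)\otimes V_x(w)^\flat) - V_x(z)\otimes V_x(z)^\flat}_g$, integrate against $\dd\mu(x)$, and use $\mu(\Sphere)=1$ together with the triangle inequality for the operator norm. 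So the whole statement comes down to showing, for each fixed $x\in\Sphere$,
\begin{equation*}
	\nnorm{P_{\gamma,1,0}\bigparen{V_x(w)\otimes V_x(w)^\flat} - V_x(z)\otimes V_x(z)^\flat}_g \leq 2\, d_g(z,w).
\end{equation*}

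First I would exploit isometry-equivariance to simplify the geometry: by \eqref{eq:TransformationRulesforDirectors}, applying the shift $\Shift_z$ we may assume $z = 0$, in which case $V_x(0) = \tfrac12 x$ has $g$-norm $1$, parallel transport along the radial geodesic from $0$ to $w$ is explicit, and $d_g(0,w) = \artanh(\nabs{w})$ (equivalently $\nabs{w} = \tanh(d_g(0,w))$). Next, the cleanest route is differential: parametrize the minimizing geodesic $\gamma$ by arclength, let $P_t$ denote parallel transport from $\gamma(0)$ back to $\gamma(t)$ pulled to the fixed fiber, and consider the curve of operators $t \mapsto A(t) \ceq P_t^{-1}\bigparen{V_x(\gamma(t))\otimes V_x(\gamma(t))^\flat}$ inside $\End(\TangentBundle_{\gamma(0)}\Disk)$. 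Then $A(1) - A(0) = \int_0^1 A'(t)\,\dd t$, and since parallel transport is a $g$-isometry, $\nnorm{A(1)-A(0)}_g \leq \int_0^1 \nnorm{(\nabla_{\dot\gamma} (V_x\otimes V_x^\flat))\at_{\gamma(t)}}_g\,\dd t$. Using the product rule for the covariant derivative, $\nabla_{\dot\gamma}(V_x\otimes V_x^\flat) = (\nabla_{\dot\gamma}V_x)\otimes V_x^\flat + V_x\otimes(\nabla_{\dot\gamma}V_x)^\flat$, and by \eqref{eq:NablaV}, $\nabla_{\dot\gamma}V_x = \ninnerprod{V_x,\dot\gamma}_g\, V_x - \dot\gamma$ (with $g$-inner product). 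Since $\nabs{V_x}_g = 1$ and $\nabs{\dot\gamma}_g = 1$, Cauchy–Schwarz gives $\nabs{\nabla_{\dot\gamma}V_x}_g \leq 2$; combined with $\nabs{V_x}_g = 1$ and the fact that $\nnorm{a\otimes b^\flat}_g = \nabs{a}_g\nabs{b}_g$ for rank-one operators, we get $\nnorm{\nabla_{\dot\gamma}(V_x\otimes V_x^\flat)}_g \leq 2\cdot 1 \cdot 1 + 1\cdot 2 = 4$. That would only yield $L = 4$, so a sharper argument is needed.

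The gain to $L = 2$ should come from observing that the two rank-one terms in $\nabla_{\dot\gamma}(V_x\otimes V_x^\flat)$ are adjoints of each other, so their sum is self-adjoint and its operator norm equals the largest absolute eigenvalue; diagonalizing in the (at most two-dimensional) span of $V_x$ and $\dot\gamma$ and writing $c \ceq \ninnerprod{V_x,\dot\gamma}_g \in \intervalcc{-1,1}$, one computes the eigenvalues of $(\nabla_{\dot\gamma}V_x)\otimes V_x^\flat + V_x\otimes(\nabla_{\dot\gamma}V_x)^\flat = (cV_x - \dot\gamma)\otimes V_x^\flat + V_x\otimes(cV_x-\dot\gamma)^\flat$ explicitly as a $2\times2$ symmetric matrix in the basis $\{V_x,\dot\gamma\}$ (after orthonormalizing), and checks that the spectral radius is at most $2$ for all $c\in\intervalcc{-1,1}$ — indeed it should equal $2$ exactly when $c = 0$ and be strictly smaller otherwise. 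The main obstacle is this eigenvalue bookkeeping: one must set up the correct $g$-orthonormal frame, correctly account for the musical isomorphism $\flat$ with respect to $g$ (not the ambient inner product), and verify the $2\times2$ spectral estimate uniformly in $c$. Everything else — the reduction to $z=0$, the fundamental-theorem-of-calculus argument, the cancellation of the identity term, and passing the bound through the $\mu$-integral — is routine once this pointwise estimate is in hand.
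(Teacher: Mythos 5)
Your proposal is correct and follows essentially the same route as the paper: fundamental theorem of calculus along the geodesic, isometry of parallel transport, reduction to a pointwise bound on the covariant derivative of $V_x \otimes V_x^\flat$, and the Leibniz rule. The only wrinkle is that the eigenvalue detour you flag as the ``main obstacle'' is unnecessary: your intermediate bound $\nabs{\nabla_{\dot\gamma}V_x}_g \leq 2$ is loose, since $\nabla_{\dot\gamma}V_x = \ninnerprod{V_x,\dot\gamma}_g V_x - \dot\gamma$ is minus the $g$-orthogonal projection of $\dot\gamma$ onto $V_x^\perp$ and hence has norm $\sqrt{1-\ninnerprod{V_x,\dot\gamma}_g^2}\leq 1$, so the plain triangle inequality on the two rank-one terms already yields the constant $2$ — which is exactly how the paper argues. (Your $2\times 2$ spectral computation, if carried out, gives spectral radius $\sqrt{1-c^2}\leq 1$, not $2$ at $c=0$ as you guessed, so it also works but only shows the constant $2$ is not sharp.)
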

\begin{proof}
By the fundamental theorem of calculus and the properties of parallel transport, we have for a general tensor field $A\in C^1(\Disk;\End(\TangentBundle \Disk))$ that
\begin{align*}
	\textstyle
	P_{\gamma,1,0} (A(w)) - A(z) 
	&=
	\textstyle	
	\int_0^1 \frac{\dd}{\dd t} \bigparen{P_{\gamma,t,0} (A(\gamma(t)))} \, \dd t
	=
	\int_0^1 P_{\gamma,t,0} \, (\nabla_{\gamma'(t)} A) \, \dd t.
\end{align*}
Since  $P_{\gamma,t,0}$ is an isometry, we obtain
\begin{align*}
	\nnorm{P_{\gamma,1,0} (A(w)) - A(z) }_g
	&\leq
	\textstyle	
	\int_0^1 \nnorm{P_{\gamma,t,0} \, (\nabla_{\gamma'(t)} A)}_g \, \dd t	
	=
	\int_0^1 \nnorm{\nabla_{\gamma'(t)} A}_g \, \dd t		
	\\
	&
	\textstyle	
	\leq
	\int_0^1 \nnorm{\nabla A(\gamma(t))}_g \, \nabs{\gamma'(t)}_g \, \dd t		
	\leq 
	\nnorm{\nabla A}_{L^\infty_g} \, d_g(z,w),
\end{align*}
where $\nnorm{\cdot}_{L^\infty_g}$ denotes the supremum-norm with respect to $g$.
Thus, for $A = \nabla F_\mu$, it suffices to show that its covariant derivative $\nabla \nabla F_\mu$ is uniformly bounded by $2$. 
Since $\nnorm{\nabla \nabla F_\mu(w)}_g \leq \int_\Sphere \nnorm{\nabla \nabla V_x(w)}_g \, \dd \mu \leq \nnorm{ \nabla \nabla  V_x}_{L^\infty_g}$,
it suffices to show that $\nabla \nabla  V_x$ is uniformly bounded by~$2$ which is what we do next.
Abbreviate $\alpha_x \ceq V_x^\flat$. 
Because $\flat$ is covariantly constant and isometric, we have $\nnorm{\nabla\nabla \alpha_x}_{L^\infty_g}  = \nnorm{\nabla\nabla V_x}_{L^\infty_g}$.
We deduce from \eqref{eq:NablaV} that $\nabla \alpha_x = \alpha_x \otimes \alpha_x - g$. 
The Leibniz rule yields
\begin{align*}
	\nabla_X \nabla \alpha_x
	&=
	\nabla_X \paren{\alpha_x \otimes \alpha_x - g}
	=
	(\nabla_X \alpha_x) \otimes \alpha_x
	+
	\alpha_x \otimes (\nabla_X  \alpha_x)
	\\
	&=	
	(\alpha_x(X) \, \alpha_x - g(X,\cdot)) \otimes \alpha_x
	+
	\alpha_x \otimes (\alpha_x(X) \, \alpha_x - g(X,\cdot))
	.
\end{align*}
So with respect to the operator norm on $T^*\Disk \otimes T^*\Disk \otimes T^*\Disk$ given by
\begin{align}
	\nnorm{B}_g \ceq 
	\sup_{X,\,Y,\, Z \in T_w\Disk \setminus \set{0} } 
	\frac{\nabs{B(X,Y,Z)}}{\nabs{X}_g \, \nabs{Y}_g\, \nabs{Z}_g}
	\quad
	\text{for}
	\quad
	B \in T_w^*\Disk \otimes T_w^*\Disk \otimes T_w^*\Disk,
	\label{eq:OperatorNorm}
\end{align}
we have the following uniform bound for the second derivative of the director field:
\begin{align*}
	\nnorm{\nabla\nabla V_x}_{L^\infty_g} 
	=
	\nnorm{\nabla \nabla \alpha_x}_{L^\infty_g}
	&\leq 
	{\textstyle
		2 \, \nnorm{\alpha_x}_g \, \nnorm{g - \alpha_x \, \otimes \alpha_x}_g
	}
	\leq 
	2.
\end{align*}
\end{proof}

We are now ready to prove \cref{thm:quadratic convergence}. 
\begin{proof}
To apply \cref{theo:KantorovichRiemannian}, we must find $a$, $b$ and $L$ so that~\eqref{eq:Kantorovich starting conditions} and~\eqref{eq:Kantorovich Lipschitz condition} are satisfied. 
We have
$
\nnorm{\nabla F(w_0)^{-1}}_g = \nnorm{-\nabla F(w_0)^{-1}}_g = 1/\lambda_{\min},
$
where $\lambda_{\min}$ is the minimum eigenvalue of $-\nabla F(w_0)$.
Thus we can set $a = 1/\lambda_{\min}$. 
We can then choose $b$ and check that~\eqref{eq:Kantorovich starting conditions} is satisfied by letting
\begin{equation*}
	b 
	= \nabs{F(w_0)}_g/\lambda_{\min} 
	= a \, \nabs{F(w_0)}_g 
	= \nnorm{\nabla F(w_0)^{-1}}_g \, \nabs{F(w_0)}_g 
	\geq \nabs{\nabla F(w_0)^{-1} F(w_0)}_g.
\end{equation*}
Since $(\Disk,g)$ is geodesically convex and complete, we may choose $U = \Disk$.
\cref{prop:L is two} shows that \eqref{eq:Kantorovich Lipschitz condition} is satisfied with $L=2$. 
The rest follows from plugging these constants into the statement of~\cref{theo:KantorovichRiemannian}.
\end{proof}

\section{How often do the Newton-Kantorovich conditions hold?}
\label{sec:how often do NK conditions hold}

Having established in \cref{cor:time bound under NK conditions} that if the Newton-Kantorovich conditions hold, then the conformal barycenter is easy to find, we now turn to the question of how often we find ourselves in these favorable circumstances. Experimentally, the answer is ``almost always'', even for surprisingly small values of $n$ (see \cref{fig:CDF_of_Q}).
Theoretically, the answer is ``on all but an exponentially small fraction of the space of possible measures''. We now prove this result, though we won't try to be very sharp in our estimates.

\begin{figure}
	\capstart
	\begin{center}
		\includegraphics[width=0.8\textwidth]{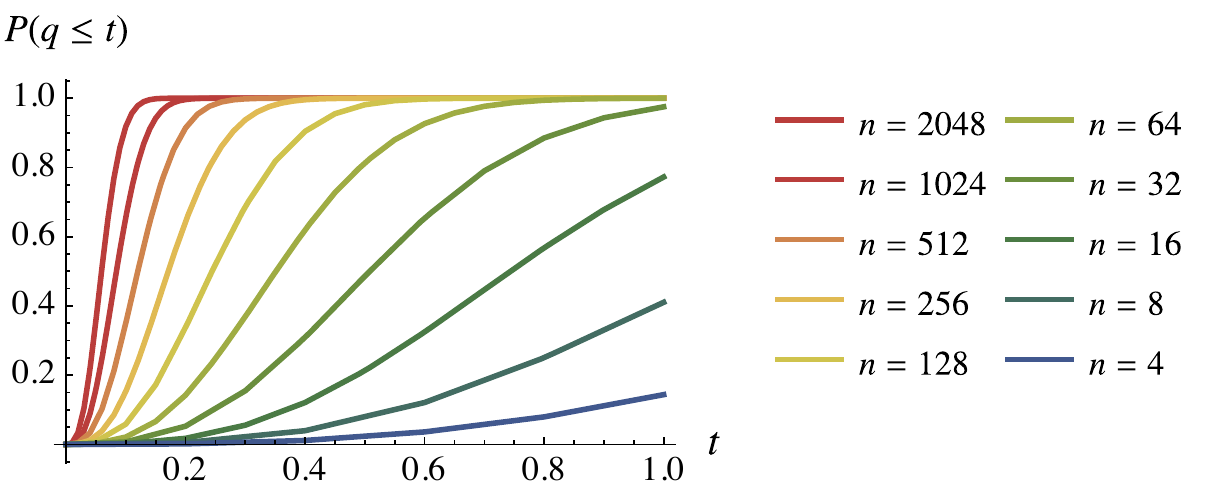}
	\end{center}
	\caption{Empirical distribution functions of $q = 4\nabs{w_{\operatorname{cm}}}/\lambda_{\min}^2$ for various values of $n$, derived from samples of size $N = 10^6$.
	Here, we fixed the weights $\omega_1= \dotsm = \omega_n = \tfrac{1}{n}$ 
	and sampled the point cloud $x$ uniformly from $(\Sphere^{2})^n$.
	As can be seen, the chance of $q$ being close to or greater than $1$ rapidly decays with increasing $n$.
	For example, for $n =64$, the empirical probablity of $P(q > 0.99)$ was lower than $0.03$ percent.
	}
	\label{fig:CDF_of_Q}
\end{figure}

\begin{theorem}
Suppose $H = \R^{\AmbDim}$, $\mu = \sum_{i=1}^n \omega_i \, \updelta(x_i)$ with $\omega_i > 0$, $\sum_{i=1}^n \omega_i = 1$. Let $\lambda_{\min}$ be the smallest eigenvalue of 
$\id_{\R^{\AmbDim}} - \sum_{i=1}^n (x_{i} \, x_{i}\transp) \, \omega_i$, let $w_{\operatorname{cm}} = \sum_{i=1}^n \omega_i \, x_i$ be the center of mass of $\mu$, and let $w_*$ be the conformal barycenter of $\mu$. Let $\nabs{\cdot}$ be the standard norm on $\R^\AmbDim$. 
Suppose we let $\rho_i = n \, \omega_i$ be the relative weight of each $\updelta(x_i)$, and have $\rho_{\max}$ be the maximum of these weights.

Consider the space $(\Sphere^{\AmbDim-1})^n$ of possible measures $\mu$ of this type with given weights $\omega_i$. For each $\AmbDim \geq 2$ there is a universal constant $C(\AmbDim) > 1$ so that the Newton-Kantorovich condition $q = 4 \, \nabs{w_{\operatorname{cm}}}/\lambda_{\min}^2 \leq 1/2$ holds on at least the fraction
\begin{equation*}
	\mathcal{P} \bigparen{q \leq \tfrac{1}{2}}
	\geq 
	1 - 2 \, \AmbDim \, C(\AmbDim)^{-\frac{n}{\rho_{\max}}}
\end{equation*}
of the space (by volume). We may take the constant $C(\AmbDim) = \exp\bigparen{\frac{3}{5} \nparen{1 - \frac{1}{\AmbDim}}^4 \frac{1}{12+\sqrt{\AmbDim}}}$, while $C(2) \geq \exp \nparen{\frac{1}{400}}$ and $C(3) \geq \exp\nparen{\frac{1}{125}}$, respectively.
\label{thm:how often}
\end{theorem}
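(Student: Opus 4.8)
The plan is to produce an event of overwhelming probability on which $\lambda_{\min}$ is close to its mean $1-\tfrac1\AmbDim$ and $\nabs{w_{\operatorname{cm}}}$ is close to its mean $0$, and to observe that every such event lies inside $\{q\le\tfrac12\}$. Write $M\ceq\sum_{i=1}^n\omega_i\,x_i\,x_i\transp$, so that $M$ is positive semidefinite with $\nnorm{M}\le1$ and $\lambda_{\min}=1-\lambda_{\max}(M)\in\intervaloc{0,1}$. Fix a threshold $\lambda_0=\lambda_0(\AmbDim)\in\intervaloo{0,1}$: if $\lambda_{\min}\ge\lambda_0$ and $\nabs{w_{\operatorname{cm}}}\le\tfrac18\lambda_0^2$, then $q=4\,\nabs{w_{\operatorname{cm}}}/\lambda_{\min}^2\le\tfrac12$. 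So it suffices to bound the two probabilities $\mathcal{P}(\lambda_{\min}<\lambda_0)$ and $\mathcal{P}\bigparen{\nabs{w_{\operatorname{cm}}}>\tfrac18\lambda_0^2}$ and take a union bound; I will use $\lambda_0=\tfrac12\nparen{1-\tfrac1\AmbDim}$, so the second radius is $s\ceq\tfrac1{32}\nparen{1-\tfrac1\AmbDim}^2$, whose square produces the factor $\nparen{1-\tfrac1\AmbDim}^4$ in the final constant.

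\emph{Lower bound on $\lambda_{\min}$.} Since the $x_i$ are i.i.d.\ uniform on $\Sphere^{\AmbDim-1}$ we have $\mathbb{E}[x_i\,x_i\transp]=\tfrac1\AmbDim\,\id$, hence $\mathbb{E}[M]=\tfrac1\AmbDim\,\id$. Writing $M-\tfrac1\AmbDim\,\id=\sum_i\omega_i\nparen{x_i\,x_i\transp-\tfrac1\AmbDim\,\id}$, each summand has operator norm at most $\omega_i\le\rho_{\max}/n$, and since $x_i\,x_i\transp$ is an orthogonal projection, $\mathbb{E}\bigbrackets{\nparen{x_i\,x_i\transp-\tfrac1\AmbDim\,\id}^2}=\tfrac1\AmbDim\nparen{1-\tfrac1\AmbDim}\,\id$, so the matrix variance is at most $v\ceq\tfrac1\AmbDim\nparen{1-\tfrac1\AmbDim}\sum_i\omega_i^2\le\tfrac{\rho_{\max}}{\AmbDim\,n}\nparen{1-\tfrac1\AmbDim}$. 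The matrix Bernstein inequality --- this is precisely the random-matrix eigenvalue estimate carried out in~\cite{MR3871192} --- then bounds $\mathcal{P}\bigparen{\nnorm{M-\tfrac1\AmbDim\,\id}\ge t}$ by $2\AmbDim\exp\bigparen{-\tfrac{t^2/2}{v+(\rho_{\max}/3n)\,t}}$, and on the complement $\lambda_{\min}\ge\nparen{1-\tfrac1\AmbDim}-t$; take $t=\tfrac12\nparen{1-\tfrac1\AmbDim}$.

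\emph{Upper bound on $\nabs{w_{\operatorname{cm}}}$.} Here $\mathbb{E}[w_{\operatorname{cm}}]=0$ and $\mathbb{E}\bigbrackets{\nabs{w_{\operatorname{cm}}}^2}=\sum_i\omega_i^2\le\rho_{\max}/n$, and exponential concentration of $\nabs{w_{\operatorname{cm}}}$ about $0$ follows from a Bernstein bound applied to each coordinate $(w_{\operatorname{cm}})_j=\sum_i\omega_i\,(x_i)_j$ --- a sum of independent mean-zero terms bounded by $\rho_{\max}/n$ with variance $\tfrac1\AmbDim\sum_i\omega_i^2$ --- together with a union bound over the $\AmbDim$ coordinates. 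Equivalently, one may apply matrix Bernstein to the symmetrized $(\AmbDim+1)\times(\AmbDim+1)$ matrices $\begin{psmallmatrix}0&x_i\\x_i\transp&0\end{psmallmatrix}$; this additionally allows the present step to be folded into the previous one, by concentrating $\begin{psmallmatrix}M&w_{\operatorname{cm}}\\w_{\operatorname{cm}}\transp&1\end{psmallmatrix}$ about $\operatorname{diag}\nparen{\tfrac1\AmbDim\,\id,\,1}$ in one inequality. Either way, $\mathcal{P}\bigparen{\nabs{w_{\operatorname{cm}}}\ge s}$ decays like $2\AmbDim\exp\bigparen{-\tfrac{n}{\rho_{\max}}\cdot\tfrac{s^2/2}{c_0+c_0'\sqrt{\AmbDim}\,s}}$, the $\sqrt{\AmbDim}$ coming from the per-coordinate scale $\AmbDim^{-1/2}$.

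Finally, one substitutes $s=\tfrac1{32}\nparen{1-\tfrac1\AmbDim}^2$ and $t=\tfrac12\nparen{1-\tfrac1\AmbDim}$, checks that the $\nabs{w_{\operatorname{cm}}}$ tail is the binding one --- its exponent has the form $-\tfrac{n}{\rho_{\max}}\cdot\tfrac{\nparen{1-1/\AmbDim}^4}{c_1+c_2\sqrt{\AmbDim}}$, while the $\lambda_{\min}$ tail decays at least as fast --- and absorbs the polynomial-in-$\AmbDim$ prefactors into a single factor $2\AmbDim$ at the cost of a slightly larger constant in the exponent, obtaining $\mathcal{P}\bigparen{q\le\tfrac12}\ge1-2\AmbDim\,C(\AmbDim)^{-n/\rho_{\max}}$ with $C(\AmbDim)$ of the stated shape. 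For $\AmbDim\in\{2,3\}$ the summands have an explicit low-dimensional structure --- for $\AmbDim=2$, $w_{\operatorname{cm}}$ is a weighted sum of planar unit vectors with independent uniform directions, which a sharp two-dimensional Hoeffding estimate handles directly --- so one can replace the generic variance and dimension constants by exact ones and reoptimize to get the sharper numbers. I expect the main obstacle to be exactly this constant bookkeeping: choosing the split $\lambda_0$ (equivalently the radii $s$, $t$) so that the two Bernstein tails balance and the dimensional prefactor collapses to exactly $2\AmbDim$, and then re-deriving the $\AmbDim=2,3$ estimates by hand rather than quoting the general inequality.
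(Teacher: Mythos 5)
Your proposal is correct and follows essentially the same route as the paper: a union bound over the events that $\nabs{w_{\operatorname{cm}}}$ is small and that $\lambda_{\min}$ stays near its mean $1-\tfrac{1}{\AmbDim}$, each controlled by the scalar/matrix Bernstein estimates of~\cite{MR3871192} (the paper simply quotes Propositions~14 and~15 there rather than rederiving them, and uses the Bhatia--Davis inequality where you use $\sum_i\omega_i^2\le\rho_{\max}/n$). The only substantive difference is the split point: the paper takes $\nabs{w_{\operatorname{cm}}}\le\tfrac{2}{5}\nparen{1-\tfrac{1}{\AmbDim}}^2$ and $\lambda_{\min}\ge\sqrt{4/5}\,\nparen{1-\tfrac{1}{\AmbDim}}$, letting the eigenvalue drop only slightly below its mean so that the binding $w_{\operatorname{cm}}$-tail gets the larger radius, whereas your choice $\lambda_0=\tfrac{1}{2}\nparen{1-\tfrac{1}{\AmbDim}}$, $s=\tfrac{1}{32}\nparen{1-\tfrac{1}{\AmbDim}}^2$ yields a valid bound of the same shape but with a noticeably weaker $C(\AmbDim)$ than the stated $\exp\bigparen{\tfrac{3}{5}\nparen{1-\tfrac{1}{\AmbDim}}^4\tfrac{1}{12+\sqrt{\AmbDim}}}$, so the rebalancing you flag as remaining bookkeeping is indeed needed to recover the theorem's explicit constants.
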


\begin{proof}
We know that 
\begin{equation*}
	\mathcal{P} \bigparen{q \leq \tfrac{1}{2}} 
	=
	\mathcal{P} \bigparen{
		2 \, \nabs{w_{\operatorname{cm}}} < \lambda_{\min}^2 
	} 
	\geq 
	\mathcal{P} \bigparen{
		\text{$
		2 \, \nabs{w_{\operatorname{cm}}}  
		\leq
		\tfrac{4}{5} \bigparen{1 - \tfrac{1}{\AmbDim}}^2
		$
		and 
		$\tfrac{4}{5} \bigparen{1 - \tfrac{1}{\AmbDim} }^2 
		\leq \lambda_{\min}^2$ }
	}
\end{equation*}
Applying the union bound, we have
\begin{equation}
	\mathcal{P} \bigparen{q \leq \tfrac{1}{2}} 
	\geq
	1 - 
	\mathcal{P}\bigparen{\nabs{w_{\operatorname{cm}}} > \tfrac{4}{10} \bigparen{1 - \tfrac{1}{\AmbDim} }^2 }
	- 
\mathcal{P}\bigparen{\lambda_{\min} < \sqrt{\tfrac{4}{5}} \bigparen{1 - \tfrac{1}{\AmbDim} }  }
	.
\label{eq:union bound}
\end{equation}
Therefore, we must separately bound the probability that $\nabs{w_{\operatorname{cm}}}$ is large and that $\lambda_{\min}$ is small. Proposition~14 in~\cite{MR3871192} uses Bernstein's inequality to prove (for $t > 0$) 
\begin{equation*}
	\mathcal{P}\nparen{ \nabs{w_{\operatorname{cm}}} > t }
	\leq 
	\AmbDim \, \exp \biggparen{ - \frac{3 \, n \, t^2}{2 \, t \, \rho_{\max} \sqrt{\AmbDim} + 6\, (1 + n^2 \operatorname{Var} \omega_i)} }
\end{equation*}
Using the Bhatia-Davis inequality, we can overestimate $n^2 \operatorname{Var} \omega_i \leq \rho_{\max} - 1$ to simplify the bound to  
\begin{equation*}
	\mathcal{P}\nparen{ \nabs{w_{\operatorname{cm}}} > t } 
	\leq 
	\AmbDim \, \biggparen{ 
		\exp\biggparen{\frac{3 \, t^2}{2 \, t \sqrt{\AmbDim} + 6}}
	}^{-\frac{n}{\rho_{\max}}}.
\end{equation*}
Substituting in the value for $t$ given in~\eqref{eq:union bound}, the function inside the exponential becomes a complicated algebraic function of $\AmbDim$ which can be underestimated (for $\AmbDim \geq 2$) by $C(\AmbDim)$. 

For the second part, we will use Proposition~15 in~\cite{MR3871192} (noting that the sign of the inequality is wrong in the statement of the Proposition; the proof is correct),
which uses the matrix Bernstein inequality to prove (for $t > 0$) that
\begin{equation*}
	\mathcal{P}\bigparen{\lambda_{\min} < \bigparen{ 1 - \tfrac{1}{\AmbDim}} - t }
	\leq 
	\AmbDim \,
	\exp \biggparen{ -\frac{\AmbDim}{\AmbDim-1} \cdot \frac{3 \, \AmbDim \, t^2 \, n}{2 \, t \, \AmbDim \, \rho_{\max} + 6 \, (1 + n^2 \, \Var \omega_i)} }
\end{equation*}
Using the Bhatia-Davis inequality as before, we simplify the bound to 
\begin{equation*}
	\mathcal{P}\bigparen{\lambda_{\min} < \bigparen{ 1 - \tfrac{1}{\AmbDim}} - t }
	\leq 
	\AmbDim \, \biggparen{  \exp \biggparen{ \frac{3 \AmbDim^2 t^2}{2 (\AmbDim-1)(3+\AmbDim t)} } }^{-\frac{n}{\rho_{\max}}}
\end{equation*}
Setting $t = \bigparen{1 - \frac{1}{\AmbDim}} \,  \bigparen{1 - \sqrt{4/5}}$ to make the left hand side match the corresponding term in~\eqref{eq:union bound}, we again get a complicated algebraic expression in $\AmbDim$ which can again be underestimated by $C(\AmbDim)$. 
The estimates of $C(2)$ and $C(3)$ are numerical. 
\end{proof}

We see that while the largest (relative) mass $\rho_{\max}$ and the dimension $\AmbDim$ both affect our bound on the fraction of the space of measures where the Newton-Kantorovich conditions hold, this fraction always converges to 1 exponentially quickly in $n$.

\section{When the Newton-Kantorovich conditions don't hold}\label{sec:NewtonRegularized}

In extreme cases, e.g., when $\mu_k$ is very concentrated around the two endpoints of a single geodesic, $\nabla F_{\mu_k}(0)$ may be very ill-conditioned (see \cref{lem:Uniform Convexity}) so that the Newton-Kantorovich conditions may not be satisfied. This leads to an oversized search direction $u_{k+1}$ and the need for backtracking. We can mend this problem by utilizing a regularized variant of Newton's method (see \cite{MR3800474}).
Instead of $u_k$ from \eqref{eq:Newton2}, one may employ the following search direction:
\begin{align}
	u_{k} &\ceq - \bigparen{\nabla F_{\mu_k}(0) - \regparam\, \nabs{F_{\mu_k}(0)}_g^2 \, \id_H}^{-1} F_{\mu_k}(0)
	\quad \text{with} \quad \alpha \geq 0.	
	\label{eq:Newton3}
\end{align}
However, to guarantee convergence, we will have to explicitly account for the possibility of backtracking. We do this in \cref{thm:main}, which gives an algorithm which converges under the most general input conditions possible. To prepare for the proof of the theorem, we will now recast the search for the conformal barycenter  as a convex optimization problem and study the behavior of the objective function.

\subsection{Potentials}

We will now find (hyperbolic) potential functions $\psi_x$ for $V_x$ and $\varPsi_\mu$ for $F_\mu$; that is, functions $\psi_x \colon \Disk \to \R$ so that $V_x(w) = -\grad_g (\psi_x)(w)$ and $F_\mu = -\grad_g (\varPsi_\mu)(w)$. 
The unique potential for $V_x$ that is gauged to $\psi_x(0) = 0$ can be computed as follows:
\begin{equation}
	\psi_{x}(w) 
	= 
	\textstyle
	\int_0^1  \ninnerprod{- V_x(t \,w), \tfrac{\dd}{\dd t}( t \,w )}_g \, \dd t
	\\
%	&= \int_0^1  \frac{
%		2 \, 
%		\nparen{
%			 2 \, t \, \nabs{w}^2 - \ninnerprod{w,x} \, \nparen{1 + t^2 \, \nabs{w}^2}
%		}
%	}{
%		\paren{
%			1 - t^2 \, \nabs{w}^2
%		}
%		\,
%		\paren{
%			1 - 2 \, t \, \ninnerprod{w,x} + t^2 \, \nabs{w}^2
%		}
%	}
%	\, \dd t
%	\notag	
%	\\
%	&
%	= \log(1- 2 \, \ninnerprod{x,w} + \nabs{w}^2) - \log(1- \nabs{w}^2)
	=
	\log \bigparen{
		\tfrac{\nabs{x-w}^2}{1- \nabs{w}^2}
	}.
	\label{eq:VPotential}
\end{equation}
The potential $\varPsi_\mu$ for $F_\mu$ with the gauge $\varPsi_\mu(0) = 0$, is given by
\begin{align}
	\varPsi_\mu(w) 
	\ceq 
	\textstyle
	\int_\Sphere \psi_x(w) \, \dd \mu(x)
	=
	\int_\Sphere \log \bigparen{
		\tfrac{\nabs{x-w}^2}{1- \nabs{w}^2}
	} \, \dd \mu(x)
	.
	\label{eq:Potential}
\end{align}
As pointed out by Douady and Earle in \cite[Section 11]{MR0857678}, $\exp(-\psi_{x}(w))$ is proportional to the Poisson kernel for $\AmbDim=2$. However, for general $\AmbDim >2$, $\exp(-\psi_{x}(w))$ is not harmonic in $w$.
With~\eqref{eq:NablaV}, we have
\begin{align}
	\textstyle
	\Hess[g](\varPsi_\mu)
	= \int_\Sphere  \Hess[g](\psi_x) \, \dd \mu(x)
	= -\int_\Sphere \nabla V_x \, \dd \mu(x)
	= g - \int_\Sphere  V_x^\flat \otimes V_x^\flat \, \dd \mu(x).
\label{eq:hessian of psi}
\end{align}
Because $ V_x$ is a unit vector field, $\Hess[g](\varPsi_\mu)(w)$ is positive semi-definite and the spectrum of $\Hess[g](\varPsi_\mu)(w)$ is contained in $\intervalcc{0,1}$. 
In particular, $\varPsi_\mu$ is convex.
Thus the conformal barycenters of $\mu$ coincide with the minimizers of the potential $\varPsi_\mu$.
Moreover, we observe that $\Hess[g](\varPsi_\mu)(w)$ is not definite if and only if 
$\supp(\mu) \subset \set{x_+, x_-}$
and if there is a geodesic that contains $w$, $x_+$, and $w_-$. 
This motivates the following terminology that we borrow from Kapovich and Millson~\cite{MR1431002}.

\begin{definition}\label{def:stable}
A measure $\mu$ on $\Sphere$ is \emph{stable} if no point $x \in \Sphere$ supports half (or more) of the mass of $\mu$, \emph{nice semi-stable} if it is stable or one point $x_+$ supports half the mass of $\mu$ and another point $x_-$ supports half of the mass of $\mu$, \emph{semi-stable} if it is nice semi-stable or if one point $x_+$ supports exactly half the mass of $\mu$, and \emph{unstable} otherwise. 
\end{definition}
In particular, $\varPsi_\mu$ is strictly convex if $\mu$ is stable.
Since the critical points of $\varPsi_\mu$ coincide with the conformal barycenters of $\mu$, this implies that each stable $\mu$ has at most one conformal barycenter.
Indeed, it is known that the conformal barycenter of a stable $\mu$ exists (see \cite{MR0857678} and Lemma 2.9 in~\cite{MR1431002}).
Also each nice semi-stable $\mu$ has conformal barycenters, but in this case they are not unique as every point on the geodesic joining $x_+$ and $x_-$ is a conformal barycenter. And all other $\mu$ have no conformal barycenter.

For the numerical optimization of $\varPsi_\mu$ and in particular for Newton's method, it would be most desirable if $\varPsi_\mu$ where \emph{uniformly} convex in the sense that there is a fixed constant $c>0$ such that $\Hess[g](\varPsi_\mu)(w) \geq c \, g\at_w$ holds for all $w \in \Disk$.
But as we will see soon in \cref{prop:SmallestEigenvaluesAndCone}, such a constant cannot exist, at least globally. 
Instead, we show that such a constants exists for every ball $B(w_0;r) \subset \Disk$ of finite radius and we give a lower bound for this constant (see \cref{lem:Uniform Convexity}). 
This lower bound is in terms of a quantitative measurement of stability for $\mu$ (with respect to the point $w_0$) that we have to develop next.
To this end, we introduce the notion of a ``viewing cone''.

\subsection{Viewing cones and the Hessian of the potential}

\begin{definition}
For $w \in \Disk$, $X \in \UnitTangent_w \Disk$, and $\delta \in \nintervalcc{0,\uppi}$, we define the \emph{viewing cone} (see~\cref{fig:viewing cone}, (a)).
\begin{align*}
	A(w,X;\delta) \ceq 
	\set{y \in \Sphere |  \ninnerprod{X, V_y(w)}_g \geq \cos(\delta)}.
\end{align*}
\end{definition}
From the transformation rules \eqref{eq:TransformationRulesforDirectors} for $V_x$, it follows immediately that the viewing cones transform as follows under the M\"obius transformation $\varphi \in \Aut(\Disk,g)$:
\begin{align}
	\ext \varphi \paren{A ( w, X;\delta )}
	=
	A \bigparen{ \varphi(w),\dd\varphi(w) \, X ; \delta }.
	\label{eq:ViewingConeMappingProperties}
\end{align}
This notion of viewing cone allows us to provide bounds for the first eigenvalue of the Hessian of $\varPsi_\mu$ in terms of concentration of $\mu$ in a symmetric pair of viewing cones of given angle:

\begin{figure}
	\capstart %ensures that hyperlink will jump to the top of this image
\newcommand{\inc}[2]{\begin{tikzpicture}
    \node[inner sep=0pt] (fig) at (0,0) {\includegraphics{#1}};
	\node[above right= 0ex] at (fig.south west) {\begin{footnotesize}(#2)\end{footnotesize}};    
\end{tikzpicture}}%
\presetkeys{Gin}{
	trim = 45 100 15 100, 
	clip = true,  
	width = 0.333\textwidth
}{}
\begin{center}
	\hfill
	\inc{Cone1}{a}%
	\hfill
	\inc{Cone2}{b}%
	\hfill
	\inc{Cone3}{c}%
	\hfill	
	\caption{(a) shows a symmetric viewing cone $A(w_1,X;\delta) \cup A(w_1,-X;\delta)$, with the center points $\lim_{t \rightarrow \infty} \exp(t \, X) = x_+$ and $\lim_{t \rightarrow -\infty} \exp(t \, X) = x_-$. In (b), we see that the same subset of $\Sphere$ is also the union of two viewing cones $A(w_2,Y_+;\beta_+) \cup A(w_2,Y_-;\beta_-)$ when viewed from $w_2$. However, as can be seen in (c), the direction vectors $Y_{\pm}$ are not antipodal to each other, the center points $y_{\pm}$ are not the same as $x_{\pm}$ and the angles $\beta_{\pm}$ are neither equal to each other nor to $\delta$.} 
	\label{fig:viewing cone}
\end{center}
\end{figure}

\begin{proposition}\label{prop:SmallestEigenvaluesAndCone}
Let $\beta >0$ and  define
\begin{align*}
	a(w) 
	\ceq 
	\inf_{X \in \UnitTangent_w \Disk} \mu \bigparen{\Sphere \setminus (A(w,X;\beta) \cup A(w,-X;\beta))}.
\end{align*}
Then the smallest eigenvalue $\lambda_\mu(w)$ of $\Hess[g](\varPsi_\mu)(w)$ with respect to $g\at_w$ satisfies
\begin{align*}
	a(w) \, \sin^2(\beta) 
	\leq 
	\lambda_\mu(w)
	\leq
	\sin^2(\beta) + a(w) \, \cos^2(\beta).
\end{align*}
So if $a(w) >0$, the condition number of the Hessian is bounded by $\kappa(\Hess[g](\varPsi_\mu)) \leq \tfrac{1}{a(w)} \csc^2(\beta)$.
\end{proposition}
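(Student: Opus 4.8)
The plan is to compute the Hessian $\Hess[g](\varPsi_\mu)(w) = g\at_w - \int_\Sphere V_x^\flat \otimes V_x^\flat \, \dd\mu(x)$ (established in~\eqref{eq:hessian of psi}) in an orthonormal frame for $(\TangentBundle_w\Disk, g\at_w)$ and to estimate, for an arbitrary unit vector $X \in \UnitTangent_w\Disk$, the Rayleigh quotient
\begin{align*}
	\Hess[g](\varPsi_\mu)(w)(X,X) = 1 - \int_\Sphere \ninnerprod{X, V_x(w)}_g^2 \, \dd\mu(x).
\end{align*}
Since $\lambda_\mu(w) = \inf_{X \in \UnitTangent_w\Disk} \bigparen{1 - \int_\Sphere \ninnerprod{X,V_x(w)}_g^2 \, \dd\mu(x)}$, everything reduces to bounding $\int_\Sphere \ninnerprod{X,V_x(w)}_g^2 \, \dd\mu(x)$ from above (for the lower bound on $\lambda_\mu$) and from below (for the upper bound), uniformly in $X$ resp.\ for the worst $X$.

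For the lower bound $\lambda_\mu(w) \geq a(w)\sin^2(\beta)$: fix any $X \in \UnitTangent_w\Disk$ and split $\Sphere$ into the symmetric cone $A(w,X;\beta)\cup A(w,-X;\beta)$ and its complement $\Sphere\setminus(A(w,X;\beta)\cup A(w,-X;\beta))$. On the cone, I bound $\ninnerprod{X,V_x(w)}_g^2 \leq 1$ crudely (using $|V_x(w)|_g = 1$ and Cauchy--Schwarz); on the complement, by definition of the viewing cone we have $|\ninnerprod{X,V_x(w)}_g| < \cos(\beta)$, so $\ninnerprod{X,V_x(w)}_g^2 \leq \cos^2(\beta)$ there. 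Writing $m = \mu(A(w,X;\beta)\cup A(w,-X;\beta))$, this gives $\int_\Sphere \ninnerprod{X,V_x(w)}_g^2\,\dd\mu \leq m \cdot 1 + (1-m)\cos^2(\beta) = 1 - (1-m)\sin^2(\beta)$, so the Rayleigh quotient is at least $(1-m)\sin^2(\beta)$. Since $1-m = \mu\bigparen{\Sphere\setminus(A(w,X;\beta)\cup A(w,-X;\beta))} \geq a(w)$ for every $X$, taking the infimum over $X$ yields $\lambda_\mu(w) \geq a(w)\sin^2(\beta)$.

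For the upper bound $\lambda_\mu(w) \leq \sin^2(\beta) + a(w)\cos^2(\beta)$: it suffices to exhibit \emph{one} unit vector $X$ for which the Rayleigh quotient is at most this value; I would take $X$ to be (close to) a minimizer of the defining infimum for $a(w)$, i.e.\ a direction for which $\mu$ of the complementary region equals (or is arbitrarily close to) $a(w)$ --- existence of an approximate minimizer is enough, or one can argue the infimum is attained by a compactness/continuity argument on the compact space $\UnitTangent_w\Disk$. For such $X$, split as before: on the complement of the symmetric cone (mass $a(w)$) bound $\ninnerprod{X,V_x(w)}_g^2 \geq 0$, and on the cone (mass $1-a(w)$) bound $\ninnerprod{X,V_x(w)}_g^2 \geq \cos^2(\beta)$ --- wait, this goes the wrong way; instead on the cone I use $\ninnerprod{X,V_x(w)}_g^2 \geq \cos^2(\beta)$ to get $\int \geq (1-a(w))\cos^2(\beta)$, hence the Rayleigh quotient for this $X$ is $\leq 1 - (1-a(w))\cos^2(\beta) = \sin^2(\beta) + a(w)\cos^2(\beta)$, and since $\lambda_\mu(w)$ is the infimum over all $X$, it is bounded above by this particular value. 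Finally, the condition number bound: when $a(w) > 0$ the largest eigenvalue is at most $1$ (the full spectrum lies in $\intervalcc{0,1}$ by~\eqref{eq:hessian of psi}), so $\kappa(\Hess[g](\varPsi_\mu)(w)) \leq 1/\lambda_\mu(w) \leq \tfrac{1}{a(w)\sin^2(\beta)} = \tfrac{1}{a(w)}\csc^2(\beta)$.

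The only genuinely delicate point is the attainment (or $\varepsilon$-attainment) of the infimum defining $a(w)$ and the measurability of $x \mapsto \ninnerprod{X,V_x(w)}_g$ needed to make the integral splitting rigorous; the director map $V$ is smooth (see~\cref{sec:DirectorField}), so measurability is immediate, and one handles the infimum either by passing to an $\varepsilon$-optimal $X$ and letting $\varepsilon \to 0$, or by noting $X \mapsto \mu\bigparen{\Sphere\setminus(A(w,X;\beta)\cup A(w,-X;\beta))}$ is lower semicontinuous on the weakly compact (in finite dimensions, compact) sphere $\UnitTangent_w\Disk$. Everything else is the elementary two-region estimate above.
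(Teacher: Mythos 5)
Your proof is correct and follows essentially the same route as the paper: the identical two-region estimate of the Rayleigh quotient $1 - \int_\Sphere \ninnerprod{X,V_x(w)}_g^2\,\dd\mu$ inside and outside the symmetric double cone, giving the lower bound uniformly in $X$ and the upper bound for a (near-)optimal $X$. The paper sidesteps your attainment worry by observing that $\lambda_\mu(w) \leq \sin^2(\beta) + b(X)\cos^2(\beta)$ for \emph{every} $X$ and then taking the infimum of the right-hand side, but your $\varepsilon$-optimal-minimizer argument is an equivalent way to finish.
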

In particular this shows that $\lambda_\mu(w)$ can be arbitrarily small when almost all the measure of $\mu$ is supported in a single double viewing cone $A(w,X;\beta) \cup A(w,-X;\beta)$ with small opening angle $\beta$.
Indeed, if $\mu$ is supported in $A(w,X;0) \cup A(w,-X;0)$, then $\lambda_\mu(w) = 0$.

\begin{proof}
Fix $\beta >0$ and $w \in \Disk$. 
For $X \in \UnitTangent_w \Disk$, define the symmetric double cone $A(X) \ceq A(w,X;\beta) \cup A(w,-X;\beta)$
and $b(X) \ceq  \mu(\Sphere \setminus A(X))$. 
Using~\eqref{eq:hessian of psi}, we have
\begin{equation*}
	\Hess[g](\varPsi_\mu)(w) (X,X)  
	= 
	1 
	- \smallint_{\Sphere \setminus A(X)} \ninnerprod{X,V_y(w)}_g^2 \, \dd \mu (y)
	- \smallint_{A(X)} \ninnerprod{X,V_y(w)}_g^2 \, \dd \mu (y).
\end{equation*}
For $y$ outside of $A(X)$, we have the bound $\ninnerprod{X,V_y(w)}^2_g \leq \cos^2 (\beta)$, 
while we only know that $\ninnerprod{X,V_y(w)}^2_g \leq 1$ in case of $y \in A(X)$. Thus we obtain
\begin{align*}
	\MoveEqLeft
	\Hess[g](\varPsi_\mu)(w) (X,X)  
	\geq 
	\textstyle
	1 
	- \smallint_{\Sphere \setminus A(X)} \cos(\beta)^2 \, \dd \mu (y)
	- \smallint_{A(X)} \, \dd \mu (y)	
	\\
	&= 1 
	-b(X) \, \cos(\beta)^2 
	- (1-b(X))
	=
	b(X) \, \sin^2(\beta) \geq a(w) \,  \sin^2(\beta).
\end{align*}
It is well-known that $\lambda_\mu(w) = \inf_{X \in \UnitTangent_w \Disk} \Hess[g](\varPsi_\mu)(w)(X,X)$, so $\lambda_\mu(w)$ is the greatest lower bound for $\{ \Hess[g](\varPsi_\mu)(w)(X,X) \mid X \in \UnitTangent_w \Disk \}$. We have just proved that $a(w) \sin^2(\beta)$ is some lower bound for the same set. Thus we obtain
$
	\lambda_\mu(w) \geq a(w) \sin^2(\beta)
$.
Since the maximum eigenvalue of $\Hess[g](\varPsi_\mu)$ is at most $1$, our estimate of the condition number $\kappa( \Hess[g](\varPsi_\mu))$ follows immediately. 

For $y$ inside of $A(X)$, we know $\ninnerprod{X,V_y(w)}^2_g \geq \cos^2 (\beta)$, while outside of $A(x)$ we know only $\ninnerprod{X,V_y(w)}^2_g \geq 0$. Thus
\begin{align*}
	\MoveEqLeft
	\lambda_\mu(w)  \leq \Hess[g](\varPsi_\mu)(w) (X,X)  
	\leq 
	\textstyle	
	1 
	- 0
	- \int_{A(X)} \cos^2(\beta) \, \dd \mu (y)	
	\\
	&\leq 1 
	- (1-b(X)) \cos^2(\beta)
	= \sin^2(\beta) + b(X) \cos^2(\beta).
\end{align*}
As before we now know that $\lambda_\mu(w)$ is a lower bound for $\{ \sin^2(\beta) + b(X) \cos^2(\beta) \mid X \in \UnitTangent_w \Disk\}$. Since $\sin^2(\beta) + a(w) \cos^2(\beta)$ is the greatest lower bound for the same set, we have
$
	\lambda_\mu(w)  
	\leq 
	\sin^2(\beta) + a(w) \cos^2(\beta),
$
as desired.
\end{proof}

\subsection{Quantitative stability}

We now want to show that~\cref{prop:SmallestEigenvaluesAndCone} can be applied to any stable Borel probability measure $\mu$ on $\Sphere$. To that end, we will first show
\begin{lemma}\label{lem:ConcentrationLemma2}
Let $\mu$ be a Borel probability measure on the complete metric space $(M,d)$, such that no singleton has measure $1/2$ or more, i.e. $\mu(\{x\}) < 1/2$ for all $x \in M$ or $\mu$ is stable. Then there are $r>0$ and $\varepsilon >0$ such that for all $x\in M$ one has $\mu(\bar B(x;r)) \leq (1-\varepsilon)/2$, where $\bar B(x;r)$ is the closed ball of radius $r$ around $x$.
\end{lemma}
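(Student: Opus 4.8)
The plan is to argue by contradiction, exploiting the compactness-like structure encoded in the hypothesis. Suppose the conclusion fails: then for every $r > 0$ and every $\varepsilon > 0$ there is a point $x \in M$ with $\mu(\bar B(x;r)) > (1-\varepsilon)/2$. Specializing to $r = 1/n$ and $\varepsilon = 1/n$, I obtain a sequence of points $x_n \in M$ with $\mu(\bar B(x_n; 1/n)) > (1 - 1/n)/2$. The goal is to extract from this sequence a single point carrying mass $\geq 1/2$, contradicting stability.

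First I would show the sequence $(x_n)$ is Cauchy, or at least has a subsequence that clusters. The key observation is that two balls each carrying more than (nearly) half the total mass must intersect, since their masses cannot both be strictly below $1/2$ while being disjoint (disjoint sets have masses summing to at most $1$). More precisely, for $m, n$ large, $\bar B(x_m; 1/m)$ and $\bar B(x_n; 1/n)$ must meet — otherwise $\mu(\bar B(x_m;1/m)) + \mu(\bar B(x_n;1/n)) \leq 1$, forcing one of them to be $\leq 1/2$, which fails once $(1-1/k)/2$ exceeds... well, one has to be slightly careful here: $(1-1/m)/2 + (1-1/n)/2 < 1$ always, so disjointness is not immediately excluded. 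The fix is to note that if the two balls are disjoint, then $\mu$ of the complement of their union is $\geq 1 - (1-1/m)/2 - \dots$, which is fine, so disjointness per se is allowed; what I actually need is that the balls come close. I would instead argue: if $d(x_m, x_n) > 1/m + 1/n + \eta$ for some fixed $\eta$ along a subsequence, then for a third index $k$ the ball $\bar B(x_k; 1/k)$ (with $1/k$ tiny) can meet at most one of them, so $\mu$ concentrates more than half its mass near $x_k$ and also more than (almost) half near whichever of $x_m, x_n$ it misses — but these are disjoint, contradiction once the masses genuinely exceed $1/2$. Since each individual mass is only $> (1-1/n)/2 < 1/2$, the cleanest route is: pass to the liminf. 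Let me restructure.

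The correct argument: by the bound $\mu(\bar B(x_n;1/n)) > (1-1/n)/2$, and since a closed ball is the intersection of the open balls of slightly larger radius, for any fixed $\delta > 0$ eventually $\mu(\bar B(x_n;\delta)) > (1-1/n)/2$. Now I claim $(x_n)$ is Cauchy. If not, there is $\delta_0 > 0$ and indices $m_j, n_j \to \infty$ with $d(x_{m_j}, x_{n_j}) > 3\delta_0$; then $\bar B(x_{m_j}; \delta_0)$ and $\bar B(x_{n_j}; \delta_0)$ are disjoint, yet each has mass $> (1 - 1/\min(m_j,n_j))/2$, so their total mass exceeds $1 - 1/\min(m_j,n_j) \to 1$ — this is \emph{not} a contradiction by itself. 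So Cauchyness can genuinely fail, and the limit point need not be unique; but what I actually want is just \emph{one} bad point. So: take any $\delta > 0$. Consider the finitely-many-or-countably-many disjoint closed balls of radius $\delta$ one could place; since $\mu$ is a probability measure, only finitely many can have mass $> (1-\varepsilon)/2$ for fixed $\varepsilon$ (at most one, in fact, once $(1-\varepsilon)/2 > 1/3$, hence at most two with total $\leq 1$ — at most two). So for each large $n$, $x_n$ lies within $2\delta$ of one of at most two "centers"; by pigeonhole a subsequence stays within $2\delta$ of a single center $c_\delta$, giving $\mu(\bar B(c_\delta; 3\delta)) \geq \limsup \mu(\bar B(x_{n_k}; \delta)) \geq 1/2$. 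Letting $\delta \to 0$ along $\delta = 1/j$, the centers $c_{1/j}$ form a Cauchy sequence (consecutive ones are forced close by the same disjointness/mass argument, now with the masses genuinely $\geq 1/2 - $ something we can take $< $ the gap), converging by completeness to some $x_* \in M$ with $\mu(\{x_*\}) = \lim_j \mu(\bar B(x_*; \text{shrinking})) \geq 1/2$ by continuity of measure from above. This contradicts stability, proving the lemma.

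The main obstacle is the bookkeeping in the previous paragraph: making precise that "only boundedly many disjoint balls carry nearly half the mass" and then threading a convergent sequence of centers through the shrinking radii while keeping the mass bound $\geq 1/2$ in the limit. A cleaner packaging that I would ultimately prefer: define $m(r) \ceq \sup_{x \in M} \mu(\bar B(x;r))$; this is non-decreasing in $r$, and the hypothesis to be proved is exactly that $m(r) < 1/2$ for some $r>0$. So suppose $m(r) \geq 1/2$ for all $r > 0$, pick $x_n$ with $\mu(\bar B(x_n;1/n)) \geq 1/2 - 1/n$, and run the "two balls with mass $\geq 1/2 - \text{small}$ must be within bounded distance" argument — now genuinely valid since two \emph{disjoint} sets of mass $\geq 1/2 - \epsilon$ each have total mass $\geq 1 - 2\epsilon$, still not a contradiction, but two sets of mass $> 1/2$ cannot be disjoint, and "$\geq 1/2 - 1/n$" is close enough that after a quantitative nesting argument the centers converge; continuity from above of $\mu$ then delivers an atom of mass $\geq 1/2$. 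I expect the write-up to hinge on choosing the right monotone quantity ($m(r)$) and applying continuity of measure from above at the final step.
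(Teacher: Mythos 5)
Your overall strategy is the same as the paper's: negate the conclusion to get a sequence of closed balls $\bar B(x_n;r_n)$ with $r_n\to 0$ and $\mu(\bar B(x_n;r_n))\to 1/2$ from below, extract a convergent subsequence of centers, and use continuity/regularity of $\mu$ to conclude that the limit point is an atom of mass $\geq 1/2$, contradicting stability. The final step (continuity of a finite measure from above on the nested balls) is fine, and your observation that at most two pairwise disjoint sets can each carry mass $>1/3$ is exactly the right combinatorial ingredient.

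The genuine gap is the extraction of a Cauchy subsequence of centers, which is the entire technical content of the lemma and which your write-up repeatedly acknowledges but never supplies (``after a quantitative nesting argument the centers converge'', ``the bookkeeping in the previous paragraph''). Two concrete problems. First, the claim you lean on in the final packaging --- that two sets of mass $\geq 1/2$ cannot be disjoint --- is false: each can have mass exactly $1/2$. So the centers $c_{1/j}$ you produce at the successive scales are \emph{not} forced to be close to one another, and a priori the construction could oscillate forever between two well-separated clusters, each accumulating half the mass; one has to make a coherent choice of cluster across all scales. Second, even at a single scale, ``no independent triple in the intersection graph'' only yields one genuine clique plus a star around a chosen center; the star does not have small diameter relative to the shrinking radii, so a one-shot pigeonhole does not produce a Cauchy subsequence. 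The paper resolves exactly this by a recursive thinning: using that $\mu(a_{k,1}\cup a_{k,i})+\mu(a_{k,j})>1$ forces every later set to meet $a_{k,1}$ or $a_{k,i}$, it extracts nested subsequences whose first elements meet all subsequent elements, and then diagonalizes; combined with the geometrically shrinking radii $4^{-n}$ this makes the selected centers Cauchy. Some argument of this kind (a K\H{o}nig-type selection of a consistent cluster across scales, or the paper's recursion) is indispensable, and without it the proposal does not constitute a proof.
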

Recall that the viewing cone $A(0,x/2;\delta) \subset \Sphere$ is just a closed spherical ball around $x \in \Sphere \cong \UnitTangent_0 \Disk$ of radius $\delta$.
Thus, \cref{lem:ConcentrationLemma2} shows that every stable Borel probability measure on $\Sphere$ has to satisfy the following property for $w = 0$:
\begin{align}
	\text{
	There are $\varepsilon>0$, $\delta >0$ s.t.
	$\mu ( A(w,X;\delta)) \leq \tfrac{1-\varepsilon}{2}$ holds for all $X \in \UnitTangent_{w} \Disk$.
	}
	\label{eq:NoConcentration property}
\end{align}
Now M\"obius transformations map stable measures to stable measures, which immediately yields:
\begin{lemma}\label{lem:ConcentrationLemma}
Let $\mu$ be a Borel probability measure on $\Sphere$. Then the following three statements are equivalent:
\begin{enumerate}
	\item $\mu$ is stable; 
	\item condition \eqref{eq:NoConcentration property} is satisfied for just one point $w \in \Disk$; and 
	\item a condition like \eqref{eq:NoConcentration property} holds for each point $w \in \Disk$ (with varying $\varepsilon$ and $\delta$).
\end{enumerate}
\end{lemma}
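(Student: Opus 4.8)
The three statements are linked by a cycle of implications; I would prove $(1)\Rightarrow(3)\Rightarrow(2)\Rightarrow(1)$, since $(3)\Rightarrow(2)$ is trivial (a statement holding for all $w$ holds in particular for one), and $(2)\Rightarrow(1)$ is easy: if $\mu$ satisfies \eqref{eq:NoConcentration property} at some $w$, then since the viewing cone $A(w,X;\delta)$ degenerates as $\delta\to 0$ to the set of $y\in\Sphere$ with $V_y(w)$ parallel to $X$ — which always contains any fixed atom $x$ once $X = V_x(w)$ — the bound $\mu(A(w,X;\delta))\le\tfrac{1-\varepsilon}{2}$ forces $\mu(\{x\})\le\tfrac{1-\varepsilon}{2}<\tfrac12$ for every $x\in\Sphere$, i.e. $\mu$ is stable. (Concretely, $\{x\} = \bigcap_{\delta>0} A(w, V_x(w);\delta)$, so $\mu(\{x\}) = \lim_{\delta\to 0}\mu(A(w,V_x(w);\delta)) \le \tfrac{1-\varepsilon}{2}$.)

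The real content is $(1)\Rightarrow(3)$. Here the strategy, already signposted in the excerpt, is: first prove \eqref{eq:NoConcentration property} at the single base point $w=0$ using \cref{lem:ConcentrationLemma2}, and then transport it to an arbitrary $w\in\Disk$ using the equivariance of viewing cones under $\Aut(\Disk,g)$. For the base point: recall from the remark preceding \eqref{eq:NoConcentration property} that $A(0,x/2;\delta)$ is exactly the closed spherical ball $\bar B(x;\delta)\subset\Sphere$ in the angular metric $d_\Sphere$; since $(\Sphere,d_\Sphere)$ is a complete metric space and a stable $\mu$ assigns every singleton mass $<1/2$, \cref{lem:ConcentrationLemma2} hands us $r>0$ and $\varepsilon>0$ with $\mu(\bar B(x;r))\le\tfrac{1-\varepsilon}{2}$ for all $x$; setting $\delta=r$ gives \eqref{eq:NoConcentration property} at $w=0$. (One caveat: the unit tangent vectors $X\in\UnitTangent_0\Disk$ are in bijection with points $x\in\Sphere$ via $x\mapsto V_x(0)=\tfrac12 x$, and the cone $A(0,X;\delta)$ centered at the direction $X$ corresponds to the ball centered at the corresponding $x$; this bijection is what lets us quantify over all $X$ by quantifying over all $x$.)

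For the transport step: given an arbitrary $w\in\Disk$, apply the shift transformation $\varphi=\Shift_w\in\Aut(\Disk,g)$, which maps $w$ to $0$. Then $\mu$ stable implies $\nu\ceq\ext\varphi_\push\mu$ is stable (Möbius transformations preserve the property ``no singleton carries half the mass'', since $\ext\varphi$ is a bijection of $\Sphere$), so by the base-point case there are $\varepsilon,\delta>0$ with $\nu(A(0,Y;\delta))\le\tfrac{1-\varepsilon}{2}$ for all $Y\in\UnitTangent_0\Disk$. Now use the equivariance \eqref{eq:ViewingConeMappingProperties}: for any $X\in\UnitTangent_w\Disk$, writing $Y=\dd\varphi(w)\,X\in\UnitTangent_0\Disk$, we have $\ext\varphi(A(w,X;\delta))=A(0,Y;\delta)$, hence
\begin{align*}
	\mu(A(w,X;\delta)) = (\ext\varphi_\push\mu)(\ext\varphi(A(w,X;\delta))) = \nu(A(0,Y;\delta)) \le \tfrac{1-\varepsilon}{2},
\end{align*}
which is \eqref{eq:NoConcentration property} at $w$ (with the same $\varepsilon,\delta$, though of course these may depend on $w$ through the chosen $\varphi$ — matching the ``varying $\varepsilon$ and $\delta$'' in the statement). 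This closes the cycle.

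**Main obstacle.** None of the steps is deep, but the one requiring care is the correspondence in the base-point argument between unit tangent directions $X\in\UnitTangent_0\Disk$ and spherical balls on $\Sphere$ — i.e. checking that $A(0,X;\delta)$ really is a metric ball of radius $\delta$ in $d_\Sphere$ and that quantifying over $X$ is the same as quantifying over centers $x\in\Sphere$. This is exactly the observation flagged in the excerpt ("Recall that the viewing cone $A(0,x/2;\delta)\subset\Sphere$ is just a closed spherical ball..."), so it may be taken as given; modulo that, the proof is the two-line transport computation above plus the trivial implications.
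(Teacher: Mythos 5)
Your proposal is correct and follows essentially the same route as the paper: the paper derives the lemma "immediately" from \cref{lem:ConcentrationLemma2} (giving \eqref{eq:NoConcentration property} at $w=0$, since $A(0,x/2;\delta)$ is a closed spherical ball) together with the fact that M\"obius transformations preserve stability, which is exactly your base-point-plus-transport argument via \eqref{eq:ViewingConeMappingProperties}. Your explicit treatment of $(2)\Rightarrow(1)$ (an atom $x$ always lies in $A(w,V_x(w);\delta)$, so its mass is at most $\tfrac{1-\varepsilon}{2}<\tfrac12$) fills in a direction the paper leaves implicit, but this is a routine completion rather than a different approach.
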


We now turn to the proof of~\cref{lem:ConcentrationLemma2}:
\begin{proof}
\emph{Assume} that this were false. Then for each $n \in \N$ there is an $x_n \in M$ with
$\mu(B(x_n;4^{-n})) > (1-4^{-n})/2$.
We abbreviate $\varOmega_n \ceq \bar B(x_n;4^{-n})$ and note that each $\varOmega_n$ has measure greater than~$1/3$.

\textbf{Claim :} \emph{There is a subsequence $(n_k)_{k \in \N}$ such that
$\varOmega_{n_k} \cap \varOmega_{n_m} \neq \emptyset$ for each $k$ and each $m \geq k$ .}
\newline
We thin out the sequence $a_1 \ceq (\varOmega_n)_{n \in \N}$ recursively 
so obtain sequences $a_1$, $a_2$, $a_3 \dotsc$ such that 
$a_{k+1}$ is a subsequence of $a_{k}$ and such that $a_{k,1}$  of each $a_k$ has nontrivial intersection with all $a_{k,i}$, $i \geq 1$.
Then $\{ a_{k,1} \}_{k \in \N}$ is the subsequence of $(\varOmega_n)_{n \in \N}$ that we are looking for.
To this end we consider $k \in \N$ and the subsequence $a_k$ of $( \varOmega_n)_{n \in \N}$.
It suffices to show that this sequence must contain an $a_{k,i_0}$ that is intersected by infinitely many of the $a_{k,j}$, $j \geq i_0$ because we can then define $a_{k+1}$ as the subsequence of $a_k$ that contains $a_{k,i_0}$ and all the $a_{k,j}$, $j \geq i_0$ that intersect $a_{k,i_0}$.
Indeed, such an $a_{k,i_0}$ does exist: Either $a_{k,1}$ intersects infinitely many $a_{k,i}$ or not. If not, then there is a first $a_{k,i}$ that is disjoint from $a_{k,1}$. For every $j \geq i$ we have
$
	\mu\nparen{ a_{k,1}\cup a_{k,i}} + \mu \nparen{ a_{k,j}}
	=
	\mu\nparen{a_{k,1}} + \mu \nparen{a_{k,i}} +\mu \nparen{a_{k,j}}
	> 1.
$
This means that each $a_{k,j}$, $j \geq i$ must intersect $a_{k,1}$ or $a_{k,i}$, since the total measure of $\mu$ is $1$. Thus at least one of $a_{k,1}$ and $a_{k,i}$ is intersected by infinitely many $a_{k,j}$. Altogether, this proves the claim.

Having found the sequence $(n_k)_{k \in \N}$, we observe that $(x_{n_k})_{k \in \N}$ must be a Cauchy sequence.
Indeed, for each $N \in \N$ and all $j \geq i  \geq N$, the balls
$\bar B(x_{n_i};4^{-n_i})$ and $\bar B(x_{n_j};4^{-n_j})$ have nontrivial intersection, thus we have
$d(x_{n_i},x_{n_j}) \leq 2 \cdot  4^{-n_N}$.
Thus there exists a limit point $x$ of $(x_{n_k})_{k \in \N}$ and we have $d(x,x_{n_k}) \leq 2 \cdot  4^{-n_k}$.
In particular, this implies $B(x; 4 \cdot  4^{-n_k}) \supset \bar B(x_{n_k}; 4^{-n_k})$ and, because every  finite Borel measure on metric spaces is outer regular, we have
\begin{align*}
	\mu(\{x\}) 
	= \lim_{k \to \infty} \mu \bigparen{ B(x; 4 \cdot  4^{-n_k})}
	\geq \lim_{k \to \infty} \mu \bigparen{ \bar B(x_{n_k};4^{-n_k})}
	\geq \lim_{k \to \infty} \tfrac{1 - 4^{n_k}}{2} = \tfrac{1}{2}
\end{align*}
which is a \emph{contradiction} to the stability of $\mu$. So the initial assumption must be wrong and we have proven the lemma.
\end{proof}

\subsection{Uniform convexity of the potential}

We now prove that the potential $\varPsi_\mu$  of a stable Borel probability measure $\mu$ is \emph{uniformly} convex on balls of finite radius and provide an explicit bound on the lowest eigenvalue of the Hessian.

\begin{lemma}\label{lem:Uniform Convexity}
Let $w_0 \in \Sphere$ and suppose that $\mu$ satisfies \eqref{eq:NoConcentration property} for $w = w_0$.
Let $\lambda_\mu(w)$ denote the smallest eigenvalue of $\Hess(\varPsi_\mu)(w)$. Then for all $w \in \Disk$ we have 
\begin{align*}
	\lambda_\mu(w) \geq \varepsilon \, \sin^2(\delta) \exp (- 2 \, d_g(w_0,w )).
\end{align*}
In particular, $\varPsi_\mu$ is uniformly convex and the condition number of $\Hess[g](\varPsi_\mu)(w)$ is uniformly bounded on hyperbolic balls around $w_0$ of finite radius. 
\end{lemma}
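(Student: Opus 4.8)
The plan is to apply \cref{prop:SmallestEigenvaluesAndCone} at the point $w$ itself, taking the opening angle to be $\beta \ceq \delta \, e^{-t}$ with $t \ceq d_g(w_0,w)$, and to deduce the bound $a(w)\geq\varepsilon$ for the quantity $a(w)$ appearing there from the hypothesis \eqref{eq:NoConcentration property} at $w_0$. Granting this, \cref{prop:SmallestEigenvaluesAndCone} gives $\lambda_\mu(w) \geq a(w)\sin^2(\beta) \geq \varepsilon\sin^2(\delta\,e^{-t})$; since $u\mapsto\sin(u)/u$ is decreasing on $\intervaloc{0,\uppi}$ and $\delta\leq\uppi$, one has $\sin(\delta\,e^{-t}) \geq e^{-t}\sin(\delta)$, hence $\lambda_\mu(w)\geq\varepsilon\sin^2(\delta)\,e^{-2d_g(w_0,w)}$, which is the assertion. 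The ``in particular'' then follows at once: on a hyperbolic ball $\ClosedBall{w_0}{R}$ the estimate reads $\lambda_\mu(w)\geq\varepsilon\sin^2(\delta)\,e^{-2R}>0$, so $\varPsi_\mu$ is uniformly convex there, and since the eigenvalues of $\Hess[g](\varPsi_\mu)$ never exceed $1$, the condition number is bounded by $e^{2R}/(\varepsilon\sin^2(\delta))$ on that ball.

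The substance is the estimate $a(w)\geq\varepsilon$, and the tool that transports information from $w_0$ to $w$ is a Lipschitz bound for extended shift maps: \emph{for $a\in\Disk$, the boundary map $\ext\Shift_a\colon\Sphere\to\Sphere$ is Lipschitz with constant $\tfrac{1+\nabs a}{1-\nabs a}=e^{d_g(0,a)}$ with respect to the angular metric $d_\Sphere$}. Indeed, by \eqref{eq:pd2f} the differential $\dd\Shift_a$ extends continuously to $\Sphere$, where, being a similarity with factor $C(a,z)$, its restriction to the tangent spaces of $\Sphere$ is a conformal linear isomorphism of scaling factor $C(a,y) = (1-\nabs a^2)/\nabs{y-a}^2 \leq (1-\nabs a^2)/(1-\nabs a)^2 = \tfrac{1+\nabs a}{1-\nabs a}$; integrating along curves turns this pointwise bound into the asserted Lipschitz estimate, and $\tfrac{1+\nabs a}{1-\nabs a}=e^{d_g(0,a)}$ follows from $\nabs a=\tanh(d_g(0,a)/2)$, i.e.\ from \eqref{eq:RadialGeodesic}.

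Now fix $X\in\UnitTangent_w\Disk$. By \eqref{eq:ViewingConeMappingProperties}, $\ext\Shift_w$ carries $A(w,X;\beta)$ to the viewing cone $A(0,\dd\Shift_w(w)X;\beta)$, which — since $V_y(0)=\tfrac12 y$ — is precisely a closed $d_\Sphere$-ball of radius $\beta$ in $\Sphere$; write it as $\bar B(\zeta_0;\beta)$. Thus $A(w,X;\beta)=\ext\Shift_{-w}\paren{\bar B(\zeta_0;\beta)}$, and applying $\ext\Shift_{w_0}$, with $\Phi\ceq\Shift_{w_0}\circ\Shift_{-w}$ and $\Theta\ceq\ext\Phi=\ext\Shift_{w_0}\circ\ext\Shift_{-w}$, gives $\ext\Shift_{w_0}\paren{A(w,X;\beta)}=\Theta\paren{\bar B(\zeta_0;\beta)}$. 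The isometry $\Phi$ satisfies $\Phi(0)=\Shift_{w_0}(w)$, so $d_g(0,\Phi(0))=d_g(w_0,w)=t$; factoring $\Phi=\Shift_{-\Phi(0)}\circ\rho$ with $\rho\ceq\Shift_{\Phi(0)}\circ\Phi$ (which fixes $0$, hence is the restriction of an orthogonal transformation of $H$, as one reads off from \eqref{eq:RadialGeodesic}, so that $\ext\rho$ is a $d_\Sphere$-isometry) and combining with the Lipschitz bound above shows that $\Theta$ is $e^t$-Lipschitz for $d_\Sphere$. Therefore $\Theta\paren{\bar B(\zeta_0;\beta)}\subseteq\bar B\paren{\Theta(\zeta_0);e^t\beta}=\bar B\paren{\Theta(\zeta_0);\delta}$, and pulling back by $\ext\Shift_{-w_0}$ (using \eqref{eq:ViewingConeMappingProperties} once more) exhibits $A(w,X;\beta)$ as a subset of a viewing cone $A(w_0,X';\delta)$ with $X'\in\UnitTangent_{w_0}\Disk$. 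Hence $\mu\paren{A(w,X;\beta)}\leq\tfrac{1-\varepsilon}{2}$ by \eqref{eq:NoConcentration property}, and likewise with $-X$ in place of $X$, so by subadditivity $\mu\paren{\Sphere\setminus\paren{A(w,X;\beta)\cup A(w,-X;\beta)}}\geq\varepsilon$; taking the infimum over $X$ yields $a(w)\geq\varepsilon$.

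The Lipschitz computation and the repeated use of \eqref{eq:ViewingConeMappingProperties} are routine. The delicate point is the cocycle-type argument that pins the distortion of $\Theta$ at $e^{d_g(w_0,w)}$: the naive bound $\Lip(\Theta)\leq\Lip(\ext\Shift_{w_0})\,\Lip(\ext\Shift_{-w})\leq e^{d_g(0,w_0)+d_g(0,w)}$ is far too lossy and would not even depend only on $d_g(w_0,w)$, so one must genuinely use that $\Phi$ displaces the origin a distance exactly $d_g(w_0,w)$ and split off the orthogonal factor. A secondary point worth stating with care is that the Lipschitz estimate must be taken for the angular metric $d_\Sphere$ on $\Sphere$ rather than for the ambient chordal metric, as the comparison between the two costs a factor of $\uppi/2$ that would corrupt the sharp exponent $2$ in $e^{-2d_g(w_0,w)}$.
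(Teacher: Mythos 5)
Your proof is correct, and it shares the paper's overall skeleton: reduce to \cref{prop:SmallestEigenvaluesAndCone} by producing an opening angle $\beta$ with $\sin\beta\geq e^{-t}\sin\delta$ (where $t\ceq d_g(w_0,w)$) for which $a(w)\geq\varepsilon$. Where you genuinely diverge is in how you compare cones at $w$ with cones at $w_0$. The paper proves a standalone statement (\cref{lem:ViewingAngleEstimate}) asserting that each viewing cone at $w$ \emph{equals} a viewing cone at $w_0$ whose angles satisfy $e^{-t}\leq\sin\delta_2/\sin\delta_1\leq e^{t}$; its proof reduces to an explicit two-dimensional trigonometric computation with the shift formula, and the resulting angle $\beta_\pm$ depends on the direction $X$. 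You instead fix the uniform angle $\beta=\delta e^{-t}$ and establish only a \emph{containment} $A(w,\pm X;\beta)\subseteq A(w_0,X'_\pm;\delta)$, which is all the measure bound needs; the distortion control comes from the conformality of $\ext\Shift_a$ on $\Sphere$ with factor $C(a,y)=(1-\nabs{a}^2)/\nabs{y-a}^2\leq e^{d_g(0,a)}$, combined with factoring the transition isometry $\Phi=\Shift_{w_0}\circ\Shift_{-w}$ through a shift by $\Phi(0)$ and an orthogonal map. You are right that this factorization is the delicate point: composing the two individual Lipschitz constants would give a bound depending on $d_g(0,w_0)+d_g(0,w)$ rather than on $d_g(w_0,w)$ alone. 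Your route buys a more conceptual derivation of the angle-distortion estimate (no coplanarity and symmetry analysis, no explicit trigonometry) at the cost of re-deriving, in Lipschitz form, the content of \cref{lem:ViewingAngleEstimate}; the closing step $\sin(\delta e^{-t})\geq e^{-t}\sin\delta$ via monotonicity of $u\mapsto\sin(u)/u$ on $\nintervaloc{0,\uppi}$ is a clean substitute for the paper's direct appeal to that lemma's inequality. All individual steps check out, including the subadditivity step, which (unlike the paper's argument) does not even need to cap $\beta$ at $\uppi/2$.
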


Proving \cref{lem:Uniform Convexity} will require us to investigate a further property of viewing cones:
Each viewing cone  $A(w,X,\delta)$ is a closed ball with respect to the angular metric on the sphere~$\Sphere$.
This can be most easily seen by applying the Möbius transformation $\varphi \in \Aut(\Disk,g)$ and by recalling that the induced M\"obius transformation $\varphi \colon \Sphere \to \Sphere$ maps spherical balls to spherical balls.
Thus every viewing cone $A(w_1,X_1,\delta_1)$ at point $w_1$ coincides with another viewing cone $A(w_2,X_2,\delta_2)$ at point $w_2$. However, the center and angle of the cone change as we move from $w_1$ to $w_2$ (see \autoref{fig:viewing cone}). 
We now estimate this change of the angles:

\begin{lemma}\label{lem:ViewingAngleEstimate}
Let $w_1 \in \Disk$, $X_1 \in \UnitTangent_{w_1}\Disk$, and $\delta_1 \in (0,\uppi)$.
Then for each $w_2 \in \Disk$ there exist unique $X_2 \in \UnitTangent_{w_2}\Disk$ and $\delta_2 \in (0,\uppi)$ such that the viewing cones $A(w_1,X_1;\delta_1)$ and $A(w_2,X_2;\delta_2)$ coincide. Further, we have
\begin{align*}
	\exp(- d_g(w_1,w_2))   \leq  \frac{\sin(\delta_2)}{\sin(\delta_1)}  \leq  \exp(d_g(w_1,w_2)).
\end{align*}
\end{lemma}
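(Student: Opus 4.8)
The plan is to reduce the statement to a one-dimensional M\"obius computation, first by means of the transformation law~\eqref{eq:ViewingConeMappingProperties} and the $\Aut(\Disk,g)$-invariance of $d_g$, then by a symmetry argument that confines the relevant data to a single two-plane, and finally to settle it with an explicit estimate in the complex plane. Existence and uniqueness of $X_2$ and $\delta_2$ are already contained in the discussion preceding the lemma: a viewing cone is exactly a proper closed ball of $(\Sphere,d_\Sphere)$ — indeed $A(0,Y;\delta) = \set{y \in \Sphere | \ninnerprod{2Y,y} \geq \cos(\delta)}$ because $V_y(0) = \tfrac12 y$ and $g\at_0 = 4\ninnerprod{\cdot,\cdot}$ — and M\"obius transformations of $\Sphere$ carry proper closed spherical balls bijectively to proper closed spherical balls, which determine their center and radius uniquely. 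For the estimate I would apply~\eqref{eq:ViewingConeMappingProperties} with $\varphi = \Shift_{w_1}$; since $d_g(w_1,w_2) = d_g(\varphi(w_1),\varphi(w_2))$, this lets me assume $w_1 = 0$. Then $A(0,X_1;\delta_1) = \ClosedBall{c}{\delta_1}$ with $c \ceq 2X_1$, and applying $\ext\Shift_{w_2}$ to the identity $A(0,X_1;\delta_1) = A(w_2,X_2;\delta_2)$ turns it, via~\eqref{eq:ViewingConeMappingProperties} again, into the assertion that $\ext\Shift_{w_2}$ maps $\ClosedBall{c}{\delta_1}$ onto a proper closed spherical ball $\ClosedBall{c'}{\delta_2}$. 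It therefore suffices to bound $\sin(\delta_2)/\sin(\delta_1)$ for such a pair, where $t \ceq \nabs{w_2} \in \nintervalco{0,1}$ and, by~\eqref{eq:RadialGeodesic}, $d_g(0,w_2) = 2\artanh(t) = \log\tfrac{1+t}{1-t}$.

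Next I would cut the problem down to a great circle. Let $\Pi \subset H$ be a two-dimensional linear subspace containing $w_2$ and $c$ (any such subspace if these are collinear). The map $\Shift_{w_2}$ preserves $\Pi \cap \Disk$ and commutes with the orthogonal reflection $R_\Pi$ fixing $\Pi$, because both maps depend on $z$ only through $\nabs{z}$, $\ninnerprod{w_2,z}$, and the $\Pi$-component of $z$; since $R_\Pi$ fixes $\ClosedBall{c}{\delta_1}$, the image ball $\ClosedBall{c'}{\delta_2}$ is $R_\Pi$-invariant, hence $c' \in \Pi$. Consequently $\ext\Shift_{w_2}$ restricts to a M\"obius transformation of the great circle $\Gamma \ceq \Pi \cap \Sphere$, and it carries the arc $\ClosedBall{c}{\delta_1} \cap \Gamma$ — which has angular length $2\delta_1$ because $c \in \Gamma$ — onto the arc $\ClosedBall{c'}{\delta_2} \cap \Gamma$, hence the endpoint pair $\set{a,b}$ of the former onto the endpoint pair $\set{a',b'}$ of the latter. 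Since two points of a unit circle at angular separation $\theta$ lie at Euclidean distance $2\nabs{\sin(\theta/2)}$, the chord lengths are $\nabs{a-b} = 2\sin(\delta_1)$ and $\nabs{a'-b'} = 2\sin(\delta_2)$ — valid also for $\delta_i \in \intervaloo{\uppi/2,\uppi}$ since $\sin(\uppi-\delta) = \sin(\delta)$ — so that $\sin(\delta_2)/\sin(\delta_1) = \nabs{\ext\Shift_{w_2}(a) - \ext\Shift_{w_2}(b)}/\nabs{a-b}$.

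Finally I would evaluate this ratio. Identifying $\Pi$ with $\C$ and rotating so that $w_2 = t \in \nintervalco{0,1}$, the complex form of the shift transformation (noted after~\eqref{eq:ShiftTrafo}, extended to the boundary) gives $\ext\Shift_{w_2}(z) = (z-t)/(1-tz)$ on $\Gamma = \Sphere^1$, and a one-line calculation yields
\begin{align*}
	\ext\Shift_{w_2}(z_1) - \ext\Shift_{w_2}(z_2) = \frac{(1-t^2)\,(z_1-z_2)}{(1-tz_1)(1-tz_2)}.
\end{align*}
Since $1-t \leq \nabs{1-tz} \leq 1+t$ for every $z \in \Sphere^1$, this gives
\begin{align*}
	\frac{\sin(\delta_2)}{\sin(\delta_1)}
	= \frac{1-t^2}{\nabs{1-ta}\,\nabs{1-tb}}
	\in \biggbrackets{\frac{1-t^2}{(1+t)^2},\ \frac{1-t^2}{(1-t)^2}}
	= \biggbrackets{\frac{1-t}{1+t},\ \frac{1+t}{1-t}}
	= \bigbrackets{\exp(-d_g(0,w_2)),\ \exp(d_g(0,w_2))},
\end{align*}
which is the claim, as $d_g(0,w_2) = d_g(w_1,w_2)$ after the initial reduction. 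The one delicate point is the passage to the great circle $\Gamma$ in the second paragraph — verifying that the ratio of interest is already ``witnessed'' inside a single two-plane — which rests on the commutation $R_\Pi \circ \Shift_{w_2} = \Shift_{w_2} \circ R_\Pi$ together with the elementary arc/chord identity; everything else is bookkeeping with~\eqref{eq:ViewingConeMappingProperties} and the displayed M\"obius identity. The degenerate cases $w_2 = 0$ (then $\delta_2 = \delta_1$) and $c$ parallel to $w_2$ (choose any two-plane through the common line) are immediate.
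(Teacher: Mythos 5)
Your proof is correct and follows essentially the same route as the paper's: reduce to $w_1=0$, confine the problem to the two-plane spanned by $w_2$ and the cone's center via the reflection symmetry of $\Shift_{w_2}$, and then compute the ratio of sines explicitly for the one-dimensional M\"obius map. The only cosmetic difference is in the endgame, where you obtain both inequalities at once from the factorization $\ext\Shift_{w_2}(z_1)-\ext\Shift_{w_2}(z_2)=(1-t^2)(z_1-z_2)/((1-tz_1)(1-tz_2))$ and the chord-length identity, whereas the paper computes $\sin^2(\delta_2)$ from $(1-\ninnerprod{\varphi(x_+),\varphi(x_-)})/2$ and gets the second inequality by swapping the roles of $w_1$ and $w_2$ --- the two computations are equivalent.
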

\begin{proof}
By applying the shift transformation $\Shift_{w_1}$, we may assume that \hbox{$w_1=0$}.
We fix the shift transformation $\varphi \ceq \Shift_{w_2} \colon \Disk \to \Disk$ which extends to a M\"obius transformation of~$\Sphere$. Observe that $A(0,X_1;\delta_1)$ is a ball on the sphere with center $x_1 \ceq \lim_{t\to \infty}\exp_0(t \, X_1)$.
Since $\ext \varphi$ is a M\"obius transformation on $\Sphere$ and because Möbius transformations map balls to balls, also $\ext \varphi(A(0,X_1;\delta_1))$ has to be a ball. Thus it can be written as a viewing cone from the origin:
$\ext \varphi(A(0,X_1;\delta_1)) = A(0,Y;\delta_2)$ with some $Y \in \UnitTangent_0 \Disk$ and $\delta_2 > 0$. Letting $X_2 \ceq \dd\varphi(y)^{-1} Y$, \eqref{eq:ViewingConeMappingProperties} shows that
\begin{align*}
	A(w_1,X_1;\delta_1)
	&=
	A(0,X_1;\delta_1)
	=
	\ext\varphi^{-1}(A(0,Y;\delta_2))
	\\
	&=
	A(\varphi^{-1}(0), \dd\varphi(y)^{-1} Y;\delta_2)
	=
	A(w_2, X_2;\delta_2),
\end{align*}
which completes the proof of the first statement of the lemma.

We still need to estimate $\delta_2$. Observe that $w_1 = 0$, $w_2$ and $x_1$ lie in a common two-dimensional plane $E \subset H$. 
Recalling~\eqref{eq:ShiftTrafo}, we observe that the shift $\varphi$ taking $w_2$ to $0$ takes any point $z \in E$ to a linear combination of $z$ and $w_2$; that is, to a point in $E$. The inverse map $\varphi^{-1} = \Shift_{-w_2}$ has the same property, so $\varphi$ maps $E \cap \Disk$ (and $E \cap \Sphere$) to itself. 

An easy computation shows that if $z_1 \in E$ and $z_2 \in E^\perp$ then $\varphi(z_1+z_2) + \varphi(z_1-z_2) \in E$.
Since the ball $A(0,X_1;\delta_1)$ is centrally symmetric with respect to $E$, this shows that the ball $\varphi(A(0,X_1;\delta_1)) = A(0,Y;\delta_2)$ is also centrally symmetric with respect to $E$, and in particular its center $y \ceq \lim_{t \to \infty} \exp_0(t\,Y) = 2 \,Y$ must lie in $E$. 

We can conclude that $A(0,X_1;\delta_1) \subset \Sphere$  and $A(0,Y;\delta_2) \subset \Sphere$ are spherical balls centered at points $x_1$ and $y$ in $E \cap \Sphere$. Thus $E$ intersects each of the balls in a diameter and $\varphi$ maps the diameter $\{x_+,x_-\}$ of $A(0,X_1;\delta_1) \cap E$ to the diameter $\{\varphi(x_+),\varphi(x_-)\}$ of $A(0,Y;\delta_2) \cap E$. 
Without loss of generality, we may identify $E$ with $\R^2$ and assume that  $X_1 = (1,0)$, $w_2 = r \, ( \cos(\theta) , \sin(\theta))$,
and $x_\pm =  (\cos(\pm \delta_1), \sin(\pm \delta_1))$.
Then we have
$
	\sin^2(\delta_2) = (1 - \cos(2 \, \delta_2))/2 = (1 - \ninnerprod{ \varphi(x_+), \varphi(x_-)})/2
$
and a short computation involving \eqref{eq:ShiftTrafo} and $\cos(\delta_1 \pm \theta) \geq -1$ lead to
\begin{align*}
	\sin^2(\delta_2) 
	=
	\frac{
		(1 - r)^2 
		\,
		(1 + r)^2 
		\, 
		\sin^2(\delta_1)
	}{
		\bigparen{1 - 2 \, r \cos(\delta_1 - \theta) + r^2} 
		\, 
		\bigparen{1 - 2 \, r \cos(\delta_1 + \theta) + r^2}
	}
	\geq
	\frac{
		(1 - r)^2
	}{
		(1 + r)^2
	} 
	\, \sin^2(\delta_1)
	.
\end{align*}
Both $\delta_1$ and $\delta_2$ are contained in the interval $\intervalcc{0,\uppi}$ where $\sin$ is nonnegative, thus we may apply the square root;
substituting $r = \tanh(d_g(w_1,w_2)/2)$ leads to 
\begin{align*}
	\sin(\delta_2)\geq \exp(- d_g(w_1,w_2)) \, \sin(\delta_1).
\end{align*}
This shows the first inequality stated by the lemma. The second follows from swapping the roles of $w_1$ and $w_2$.
\end{proof}

We are now ready to prove \cref{lem:Uniform Convexity}.
\begin{proof}
Fix $w \in \Disk$ and $X \in \UnitTangent_w \Disk$.
By \cref{lem:ViewingAngleEstimate}, there are unit tangent vectors $Y_\pm  \in \UnitTangent_{w_0} \Disk$ and angles $\beta_\pm > 0$ so that  $A(w,\pm X; \beta_\pm) = A(w_0,Y_{\pm}; \delta)$, as shown in~\cref{fig:viewing cone}. Notice that the two vectors $Y_{\pm}$ are likely \emph{not} some pair $\pm Y$ of antipodal vectors. 
We put $\beta \ceq \min \set{\beta_-,\beta_+, \uppi/2}$
%\uppi/2 as upper bound is important to guarantee that \mu(A(w, X; \beta) \cup A(w,-X; \beta)) = \mu(A(w, X; \beta)) +  \mu(A(w,-X; \beta))
and observe that \cref{lem:ViewingAngleEstimate} implies
$
	\sin^2(\beta)
	\geq
	\exp(- 2 \, d_g(w,w_0)) \, \sin^2(\delta).
$
By the choice of $\beta$, we have
$
	\mu(A(w,\pm X;\beta)) \leq \mu(A(w,\pm X; \beta_\pm) = \mu(A(w_0,Y_{\pm}; \delta)) \leq (1- \varepsilon)/2
$.
Now $0 \leq \beta \leq \uppi/2$ implies
\begin{equation*}
	b(X) 
	\ceq  
	\mu( \Sphere \setminus (A(w, X; \beta) \cup A(w,-X; \beta) ) 
	=
	1 - \mu(A(w, X; \beta)) - \mu(A(w,-X; \beta))
	\geq
	\varepsilon
	.
\end{equation*}
Since $X$ was arbitrary, $a(w) = \inf_{X\in\UnitTangent_w\Disk} b(X) \geq \varepsilon$ holds as well. \cref{prop:SmallestEigenvaluesAndCone} then yields
\begin{align*}
	\lambda_\mu(w) 
	\geq 
	a(w) \, \sin^2(\beta) 
	\geq
	\varepsilon \, \sin^2(\delta) \exp(-2\,d_g(w,w_0))
	.
\end{align*}
The largest eigenvalue of $\Hess[g](\varPsi_\mu)$ is at most $1$, so this provides a uniform estimate on the condition number of this matrix as well.
\end{proof}

\subsection{A general bound on the location of the conformal barycenter}
Recall that conformal barycenters of $\mu$ coincide with minimizers of the potential $\varPsi_\mu$.
Thus the following provides an	 \emph{a priori} bound on the distance that a conformal barycenter (if existent) can have from a given point. 

\begin{lemma}\label{lem:generalbound}
Let $w_0 \in \Sphere$ and suppose that $\mu$ satisfies \eqref{eq:NoConcentration property} for $w = w_0$.
Then we have $\varPsi_\mu(w) > \varPsi_\mu(w_0)$ for each $w$ outside the closed ball $B$ around $w_0$ of radius
$
	r(\varepsilon, \delta) \ceq - (2/\varepsilon) \log \bigparen{\sin(\delta)/2}.
$
\end{lemma}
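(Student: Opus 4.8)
The plan is to reduce to the case $w_0=0$ and then estimate $\varPsi_\mu$ directly from its integral representation~\eqref{eq:Potential}. First I would record that for every $\varphi\in\Aut(\Disk,g)$ the functions $\varPsi_{\ext\varphi_\push\mu}\circ\varphi$ and $\varPsi_\mu$ have the same $g$-gradient on $\Disk$: since $\grad_g\varPsi_\mu=-F_\mu$ by construction, and $\varphi$ is a $g$-isometry, $\grad_g\bigparen{\varPsi_{\ext\varphi_\push\mu}\circ\varphi}(p)=\dd\varphi(p)^{-1}\grad_g\varPsi_{\ext\varphi_\push\mu}(\varphi(p))=-\dd\varphi(p)^{-1}F_{\ext\varphi_\push\mu}(\varphi(p))=-F_\mu(p)$ by~\eqref{eq:TransformationRulesforF}. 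Hence $\varPsi_{\ext\varphi_\push\mu}\circ\varphi-\varPsi_\mu$ is constant on the connected set $\Disk$, so $\varPsi_\mu(w)-\varPsi_\mu(w_0)=\varPsi_{\ext\varphi_\push\mu}(\varphi(w))-\varPsi_{\ext\varphi_\push\mu}(\varphi(w_0))$. Taking $\varphi\ceq\Shift_{w_0}$, writing $\mu'\ceq(\ext\Shift_{w_0})_\push\mu$ and $\tilde w\ceq\Shift_{w_0}(w)$, and using $\varphi(w_0)=0$ together with the gauge $\varPsi_{\mu'}(0)=0$ from~\eqref{eq:Potential}, this reads $\varPsi_\mu(w)-\varPsi_\mu(w_0)=\varPsi_{\mu'}(\tilde w)$. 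As $\Shift_{w_0}$ is an isometry, $d_g(0,\tilde w)=d_g(w_0,w)$, and by~\eqref{eq:ViewingConeMappingProperties} the measure $\mu'$ satisfies~\eqref{eq:NoConcentration property} at $w=0$ with the same $\varepsilon$ and $\delta$. So it suffices to treat $w_0=0$.

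For the estimate at $0$, I would use that~\eqref{eq:NoConcentration property} now says exactly that every closed spherical cap of angular radius $\delta$ has $\mu$-mass at most $\tfrac{1-\varepsilon}{2}$ (the remark after~\cref{lem:ConcentrationLemma2}); note $\mu\geq 0$ forces $\varepsilon\leq 1$. Fix $w\in\Disk\setminus\{0\}$, put $\hat w\ceq w/\nabs{w}$, and let $C\subset\Sphere$ be the cap of angular radius $\delta$ about $\hat w$, so $\mu(C)\leq\tfrac{1-\varepsilon}{2}$. The two bounds to feed in are $\nabs{x-w}\geq 1-\nabs{w}$ for all $x\in\Sphere$, and, for $x\notin C$ (where $\ninnerprod{x,\hat w}<\cos\delta$), $\nabs{x-w}^2=1-2\nabs{w}\ninnerprod{x,\hat w}+\nabs{w}^2\geq 1-2\nabs{w}\cos\delta+\nabs{w}^2\geq\sin^2\delta$, the last step because $\min_{t\in\intervalcc{0,1}}(1-2t\cos\delta+t^2)=\sin^2\delta$. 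Splitting $\int_\Sphere\log\nabs{x-w}^2\,\dd\mu$ over $C$ and $\Sphere\setminus C$, and using $\mu(C)\leq\tfrac{1-\varepsilon}{2}$, $\log(1-\nabs{w})\leq 0$, $\log\sin\delta\leq 0$, $\mu(\Sphere\setminus C)\leq 1$, and $\log(1-\nabs{w}^2)=\log(1-\nabs{w})+\log(1+\nabs{w})$, one gets
\[
	\varPsi_\mu(w)\ \geq\ (1-\varepsilon)\log(1-\nabs{w})+2\log\sin\delta-\log(1-\nabs{w}^2)\ =\ -\varepsilon\log(1-\nabs{w})-\log(1+\nabs{w})+\log\bigparen{\sin^2\delta},
\]
and then, bounding $-\log(1+\nabs{w})\geq-\log 2$,
\[
	\varPsi_\mu(w)\ \geq\ -\varepsilon\log(1-\nabs{w})+\log\bigparen{\tfrac{\sin^2\delta}{2}} .
\]
Thus $\varPsi_\mu(w)>0$ as soon as $1-\nabs{w}<\bigparen{\sin^2\delta/2}^{1/\varepsilon}$.

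It remains to convert the radius condition. From~\eqref{eq:RadialGeodesic} one has $d_g(0,w)=2\,\artanh\nabs{w}$, so $d_g(0,w)>r(\varepsilon,\delta)$ is equivalent to $\nabs{w}>\tanh(r/2)$, whence $1-\nabs{w}<1-\tanh(r/2)=\tfrac{2}{\exp(r)+1}\leq 2\exp(-r)=2\bigparen{\sin\delta/2}^{2/\varepsilon}$ by the definition of $r$. Since $\varepsilon\leq 1$ we have $2\bigparen{\sin\delta/2}^{2/\varepsilon}=2^{\,1-1/\varepsilon}\bigparen{\sin^2\delta/2}^{1/\varepsilon}\leq\bigparen{\sin^2\delta/2}^{1/\varepsilon}$, so the threshold from the previous step is met, giving $\varPsi_\mu(w)>0=\varPsi_\mu(w_0)$ after undoing the reduction of the first paragraph.

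There is no conceptual difficulty here; the one delicate point is the bookkeeping through the chain of coarse estimates (the trivial bound $\nabs{x-w}\geq 1-\nabs{w}$ on $C$, then $-\log(1+\nabs{w})\geq-\log 2$, then $1-\tanh(r/2)\leq 2\exp(-r)$): one must check that, despite their crudeness, the resulting threshold $1-\nabs{w}<(\sin^2\delta/2)^{1/\varepsilon}$ is still implied by $d_g(0,w)>r(\varepsilon,\delta)$ with the precise constant $r(\varepsilon,\delta)=-(2/\varepsilon)\log(\sin(\delta)/2)$. This works out exactly because $\varepsilon\leq 1$ makes $2^{\,1-1/\varepsilon}\leq 1$; had that factor come out $>1$ one would need to keep $\nabs{x-w}^2\geq 1-2\nabs{w}\cos\delta+\nabs{w}^2$ on $\Sphere\setminus C$ without further weakening, or replace $-\log(1+\nabs{w})\geq-\log 2$ by a $\nabs{w}$-dependent bound.
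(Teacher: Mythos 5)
Your proof is correct. It follows the paper's overall strategy --- shift so that $w_0=0$ (where $\varPsi_\mu(0)=0$ by the gauge), isolate the spherical cap of angular radius $\delta$ around the direction of $w$, and use the mass bound $\tfrac{1-\varepsilon}{2}$ from \eqref{eq:NoConcentration property} --- but the pointwise estimates are carried out quite differently. The paper restricts $\varPsi_\mu$ to the geodesic ray $\gamma_x(r)=\tanh(r/2)\,x$, computes $\psi_y(\gamma_x(r))=\log\bigparen{\cosh r-\cos\theta\,\sinh r}$ with $\cos\theta=\ninnerprod{x,y}$, and bounds this convex function of $r$ from below by its slant asymptotes $r+2\log\sin(\theta/2)$ and $-r+2\log\cos(\theta/2)$; integrating and using the cap bound gives the affine estimate $\varPsi_\mu(\gamma_x(r))\geq\varepsilon\, r+2\log\bigparen{\sin(\delta)/2}$, which is positive exactly when $r>r(\varepsilon,\delta)$ with no slack to track. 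You instead bound $\nabs{x-w}$ directly in Euclidean terms ($\geq 1-\nabs{w}$ everywhere, $\geq\sin\delta$ off the cap) and only convert $d_g(0,w)>r$ into a condition on $1-\nabs{w}$ at the very end; this is more elementary but accumulates the losses $-\log(1+\nabs{w})\geq-\log 2$ and $1-\tanh(r/2)\leq 2e^{-r}$, so the final comparison only closes because $2^{1-1/\varepsilon}\leq 1$ --- and your observation that \eqref{eq:NoConcentration property} forces $\varepsilon\leq 1$ is precisely what rescues the constant. (Asymptotically the two computations encode the same mechanism: your $(1-\varepsilon)\log(1-\nabs{w})-\log(1-\nabs{w}^2)$ is the paper's $\varepsilon\,r$ up to bounded terms.) Both arguments land on the same radius $r(\varepsilon,\delta)$, and your explicit justification of the reduction to $w_0=0$ via the gradient identity and \eqref{eq:ViewingConeMappingProperties} is a welcome bit of care that the paper leaves implicit.
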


\begin{proof}
By applying a shift $\sigma_{w_0}$, we may assume without loss of generality that $w_0 = 0$.
We recall that $\varPsi_\mu$ was chosen so that $\varPsi_\mu(0) = 0$. 
We now prove that $\varPsi_\mu$ is strictly positive outside of the closed ball $B \ceq \set{w \in \Disk | d_g(w,0) \leq r(\varepsilon,\delta)}$. 

For $x \in \Sphere$, put $X \ceq V_x(0) = \frac{1}{2} \, x$.
Denote the unit speed geodesic ray emanating from $0$ in direction $X$ by $\gamma_x(r) \ceq \exp_0(r \, X) = \tanh(r/2) \, x$ (see \eqref{eq:RadialGeodesic}).
With \eqref{eq:VPotential},
we obtain for each $y \in \Sphere$ that
\begin{align*}
	\psi_{y}( \gamma_x(r) ) 
%	=
%		\log \biggparen{
%		\frac{1- 2 \,\cos(\theta) \, \tanh(r/2) + \tanh^2(r/2)}{1- \tanh^2(r/2)}
%	}
	=
	\log \paren{ \cosh(r) - \cos(\theta) \, \sinh(r)},
	\quad
	\text{where $\cos(\theta) = \ninnerprod{x,y}$.}
\end{align*}
As a restriction of a convex function to a geodesic, $r \mapsto \psi_{y}( \gamma_x(r) )$ is convex. 
Further, a direct computation shows that it has slant asymptotes which we may employ as lower bounds:
\begin{align*}
	\psi_{y}( \gamma_x(r) ) 
	\geq
	\min \Set{
		r + 2 \, \log \nparen{\sin(\tfrac{\theta}{2})}
		,
		-r + 2 \, \log \nparen{\cos(\tfrac{\theta}{2})}
	}.
\end{align*}
For $0\leq \theta \leq \uppi$, $\log \nparen{\sin(\tfrac{\theta}{2})}$ is monotonically increasing, while $\log \nparen{\cos(\tfrac{\theta}{2})}$ is monotonically decreasing.
So we have the bounds
\begin{align*}
	\psi_{y}( \gamma_x(r) )
	\geq
	\begin{cases} 
		\phantom{-}r + 2 \, \log \nparen{ \sin (\tfrac{\delta}{2})}, & 
		\text{for $\delta \leq \theta \leq \uppi$,}\\
		-r + 2 \, \log \nparen{ \cos (\tfrac{\delta}{2})}, & \text{otherwise}.
	\end{cases}
\end{align*}
We abbreviate $a \ceq \mu(A(0,X;\delta)) \leq \frac{1- \varepsilon}{2} $.
Thus, we have for all $r > r(\varepsilon,\delta) > 0$ that
\begin{align*}
	\MoveEqLeft
	\varPsi_\mu( \gamma_x(r) )
	=
	\textstyle
	\int_{\Sphere \setminus A(0,X;\delta)} \psi_{y}( \gamma_x(r) ) \, \dd \mu(y)
	+
	\int_{A(0,X;\delta)} \psi_{y}( \gamma_x(r)  ) \, \dd \mu(y)
	\\
	&\geq
	\textstyle	
	\int_{\Sphere \setminus A(0,X;\delta)}  \bigparen{ r + 2 \, \log \nparen{ \sin (\tfrac{\delta}{2})} } \, \dd \mu(y)
	+
	\int_{A(0,X;\delta)} \bigparen{- r + 2 \, \log \nparen{\cos(\tfrac{\delta}{2})}} \, \dd \mu(y)	
	\\
	&=
	(1-a) \, \bigparen{ r + 2 \, \log \nparen{ \sin (\tfrac{\delta}{2})} }
	+
	a \, \bigparen{ - r + 2 \, \log \nparen{\cos(\tfrac{\delta}{2})}}	
	\\
%	&=
%	\paren{ 1- 2 \, a} \, r
%	+
%	2 \, (1-a) \, \log \nparen{ \sin (\tfrac{\delta}{2})} 
%	+ 2 \, a \, \log \nparen{\cos(\tfrac{\delta}{2})} 
%	\\
	&
	\geq 
	\varepsilon\, r
	+
	2 \log  \bigparen{ \min \nparen{ \sin ( \tfrac{\delta}{2}), \cos ( \tfrac{\delta}{2})}}
	\geq
	\varepsilon\, r 
	+
	2 \log( \sin(\delta)/2)
	>0
	.
\end{align*}
\end{proof}

\subsection{Main result: A fast, robust, and globally convergent algorithm}
\label{subsec:main result}

We are now prepared to state a damped and regularized version of Newton's method \eqref{eq:Newton1}.
By performing an appropriate line search, we can guarantee that the method converges for each stable measure and each starting values.

Choosing a regularization parameter $\regparam \geq 0$,\footnote{In our experiments, the choice $\alpha = 1$ turned out to work best.} we start with some initial guess $w_0 \in \Disk$ and iteratively define:
\begin{align}
	v_{k} &\ceq - \bigparen{\nabla F_{\mu}(w_k) - \regparam \, \nabs{F_{\mu}(w_k)}_g^2 \, \id_{\TangentBundle_{w_k}\Disk}}^{-1} F_{\mu}(w_k),
	\label{eq:Newton3.1}
	\\
	w_{k+1} &= \exp_{w_k}(\tau_k \, v_{k}),
	\label{eq:Newton3.2}
\end{align}
where the $\tau_k > 0$ are chosen by a line search so that for some given constants $0< c_1 \leq \tfrac{1}{2}$ and $c_1 < c_2 < 1$, 
	the following conditions are met along the \emph{search line} $\gamma_{k}(t) \ceq \exp_{w_k}(t \, v_{k})$ for the \emph{merit function} $\MeritFun_{k}(t) \ceq \varPsi_{\mu_k}( \gamma_{k} (t))$:
\begin{align}
	\MeritFun_{k}(\tau_{k}) &\leq \MeritFun_{k}(0) + c_1 \, \tau_k \, \MeritFun_{k}'(0)
	&&\text{(Armijo condition)}
	\label{cond:armijo}
	\\
	\MeritFun_{k}'(\tau_{k}) &\geq c_2 \, \MeritFun_{k}'(0). 
	&&\text{(weak Wolfe condition)} 
	\label{cond:wolfe}		
\end{align}
Moreover, we require that
\begin{gather}
	\text{$\tau_{k} = 1$ whenever this choice satisfies \eqref{cond:armijo} and \eqref{cond:wolfe}.}
	\label{cond:otherwise}	
\end{gather}
We note that the Armijo and weak Wolfe condition are standard conditions to guarantee global convergence in unconstrained optimization algorithms.
It is well-known how to realize these conditions in a line search algorithm (see for example \cite[Chapter~3]{MR2244940}).
Such algorithms only involve evaluation of $f_k(t)- f_k(0)$ and $f_k'(t)$.
For an isometry $\varphi \in \Aut(\Disk)$, the potential $\varPsi_{\varphi_\push \mu} \circ \varphi$ differs from $\varPsi_{\mu}$ only by a constant, and so the line search can also be pushed to the origin via the shift transformation $\Shift_{w_k}$. This leads us to the following shifted variant of \eqref{eq:Newton3}:
\begin{align}
	\mu_k &\ceq (\ext \Shift_{w_k})_\push \mu
	\label{eq:Newton4.1}	
	\\
	u_{k} &\ceq - \bigparen{\nabla F_{\mu_k}(0) - \regparam \, \nabs{F_{\mu_k}(0)}_g^2 \, \id_{H}}^{-1} F_{\mu_k}(0)
	\label{eq:Newton4.2}
	\\
	w_{k+1} &\ceq \Shift(-w_k , \exp_{0}(\tau_k \, u_{k})),
	\label{eq:Newton4.3}	
\end{align}
where $\tau_k$ is determined as in \eqref{cond:armijo}--\eqref{cond:otherwise} but with the merit function $f_k$ replaced by
$f_k(t) \ceq \varPsi_{\mu_k}(\exp_0(t \, u_{k}))$.
	
Next we show that the sequence $(w_k)_{k\in\N}$ created by these algorithms converges to the conformal barycenter for each stable Borel probability measure $\mu$ on $\Sphere$.	
As a side effect, we obtain a proof of the \emph{existence} and uniqueness of the conformal barycenter of any stable Borel probability measure. In contrast to the original proof by Douady and Earle \cite[Proposition 1]{MR0857678}, this proof does not rely on the Poincaré-Hopf index theorem and thus also works in nonseparable Hilbert spaces (for which even the Browder-Minty theorem cannot be applied to show existence).
	
\begin{theorem}\label{thm:main}
Suppose $\mu$ is a Borel probability measure on $\Sphere$ which is stable in the sense of \cref{def:stable}.
Then there exists a unique conformal barycenter $w_*(\mu)$ of $\mu$ and the iterates $(w_k)_{k \in \N}$ defined by \eqref{eq:Newton3.1}-- \eqref{eq:Newton3.2} or \eqref{eq:Newton4.1}--\eqref{eq:Newton4.3} converge quadratically to $w_*(\mu)$
in the sense that there is a $C > 0$ (depending on $\mu$) so that
\begin{equation*}
%	d_g(w_{k+1},w_*(\mu))
%	\leq
%	C \, d_g(w_k,w_*(\mu))^2
%	\quad
%	\text{holds for all sufficiently large $k$.} 
	\limsup_{k \rightarrow \infty} \frac{d_g(w_{k+1},w_*(\mu))}{d_g(w_k,w_*(\mu))^2} \leq C.
\end{equation*}
\end{theorem}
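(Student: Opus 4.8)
The plan is to invoke the convergence theory for regularized Riemannian Newton methods with line search developed by Ring and Wirth \cite{MR2968868}, applied to the minimization of the potential $\varPsi_\mu$ (equivalently $\varPsi_{\mu_k}$ after shifting). The reformulation is already in place: by \eqref{eq:hessian of psi} the critical points of $\varPsi_\mu$ are exactly the conformal barycenters, and $\varPsi_\mu$ is convex. The first task is to establish \emph{existence} and \emph{uniqueness} of $w_*(\mu)$. Uniqueness (up to the semi-stable degeneracy that stability excludes) follows from strict convexity, which in turn follows from \cref{prop:SmallestEigenvaluesAndCone} together with \cref{lem:ConcentrationLemma}: stability gives \eqref{eq:NoConcentration property} at some point, hence $a(w) > 0$, hence $\lambda_\mu(w) > 0$ everywhere. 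For existence, I would use \cref{lem:generalbound}: it shows that $\varPsi_\mu$ exceeds $\varPsi_\mu(w_0)$ outside a hyperbolic ball $B$ of explicit finite radius $r(\varepsilon,\delta)$, so any minimizing sequence stays in the closed hyperbolic ball $\bar B$. Here one must be slightly careful because $\bar B$ is not compact when $H$ is infinite-dimensional; the argument is instead that $\varPsi_\mu$ restricted to $\bar B$ is continuous, bounded below, and \emph{uniformly convex} by \cref{lem:Uniform Convexity} (with constant $c \ceq \varepsilon \sin^2(\delta)\exp(-2r(\varepsilon,\delta)) > 0$), so it attains its infimum at a unique point $w_*$ of $\bar B$, which is then the global minimizer and the unique critical point.

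Next I would verify the hypotheses of the Ring--Wirth convergence theorem on the relevant set. Fix the starting point $w_0$ and let $R$ be large enough that the sublevel set $\set{w \in \Disk | \varPsi_\mu(w) \leq \varPsi_\mu(w_0)}$ is contained in the closed hyperbolic ball $\bar B(w_0; R)$ — such an $R$ exists by \cref{lem:generalbound} applied with base point $w_0$ (after noting $\mu$ satisfies a version of \eqref{eq:NoConcentration property} at $w_0$ by \cref{lem:ConcentrationLemma}). Since the line search enforces the Armijo condition \eqref{cond:armijo}, all iterates $w_k$ remain in this sublevel set, hence in $\bar B(w_0;R)$. On this ball we have: (i) $\Hess[g](\varPsi_\mu)$ is bounded above by $1$ (from \eqref{eq:hessian of psi}, since $V_x$ is $g$-unit) and below by the positive constant $c(R) \ceq \varepsilon\sin^2(\delta)\exp(-2(R + d_g(w_0,\tilde w_0)))$ coming from \cref{lem:Uniform Convexity}; (ii) the Hessian is Lipschitz along geodesics with constant $L = 2$ by \cref{prop:nablaF_LipschitzContinuity} (noting $\Hess[g](\varPsi_\mu) = -\nabla F_\mu$); and (iii) the regularization term $\regparam\,\nabs{F_\mu(w_k)}_g^2\,\id$ is a nonnegative multiple of the identity that vanishes as $w_k \to w_*$ (since $F_\mu(w_*) = 0$ and $F_\mu$ is continuous), so asymptotically the iteration \eqref{eq:Newton3.1}--\eqref{eq:Newton3.2} is the pure Newton step with $\tau_k = 1$ eventually admissible. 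These are exactly the ingredients Ring and Wirth require to conclude global convergence of the damped iteration and eventual Q-quadratic local convergence.

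The main obstacle, and the step requiring the most care, is the passage from ``globally convergent with eventual unit step'' to the \emph{quadratic} rate, i.e.\ controlling the transition regime where the regularization is still active and where $\tau_k$ may be less than $1$. Concretely, one must show that once $w_k$ is close enough to $w_*$, the Armijo and weak Wolfe conditions \eqref{cond:armijo}--\eqref{cond:wolfe} are satisfied by $\tau_k = 1$, so that \eqref{cond:otherwise} forces the full Newton step; then the standard Newton--Kantorovich estimate (using $L = 2$ and the lower bound $c(R)$ on the Hessian, exactly as in the proof of \cref{thm:quadratic convergence}) gives $d_g(w_{k+1}, w_*) \leq C\,d_g(w_k, w_*)^2$ with $C$ essentially $L/(2\,c(R))$ adjusted for the regularization perturbation, which is $O(\regparam\,d_g(w_k,w_*)^2)$ and hence harmless. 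Verifying admissibility of the unit step near $w_*$ for a \emph{regularized} direction is the delicate point — it amounts to a Taylor expansion of the merit function $f_k$ at $t=0$ combined with the observation that the regularized direction $v_k$ differs from the Newton direction by a term of order $\regparam\,\nabs{F_\mu(w_k)}_g^2 = O(d_g(w_k,w_*)^2)$, small enough that the descent inequalities hold with room to spare once $c_1 \leq \tfrac12$. I would also need to double-check that the hyperbolic distance, rather than a coordinate distance, is the right metric throughout — but \cref{prop:nablaF_LipschitzContinuity} is already phrased in $d_g$, so the estimates chain together cleanly. Finally, the ``side effect'' claim about existence in nonseparable Hilbert spaces is automatic from the above, since nothing in the argument used separability, the Poincaré--Hopf theorem, or Browder--Minty.
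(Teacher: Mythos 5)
Your proposal is correct and follows essentially the same route as the paper: confine the iterates to a finite hyperbolic ball via the Armijo condition and \cref{lem:generalbound}, obtain uniform two-sided bounds on the (regularized) Hessian there from \cref{lem:Uniform Convexity}, and then invoke the Ring--Wirth machinery for global convergence, eventual admissibility of the unit step, and the Q-quadratic rate (the paper handles your ``delicate point'' by bounding the discrepancy between the regularized and pure Newton directions by $\alpha(1+\alpha)^2\,\nabs{v_k}_g^3$ and citing their Propositions 5, 7 and 8). The only real divergence is the existence of $w_*(\mu)$: you argue variationally via uniform convexity on the ball, whereas the paper extracts existence from the Newton--Kantorovich theorem (\cref{thm:quadratic convergence}) applied at a late iterate once $\nabs{F_\mu(w_k)}_g$ is small enough; both mechanisms avoid compactness and work in nonseparable Hilbert spaces.
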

\begin{proof}
We employ the techniques from \cite{MR2968868} utilizing the Riemannian exponential map $\exp$ as retraction.
Since $\mu$ is stable, \cref{lem:ConcentrationLemma} and \cref{prop:SmallestEigenvaluesAndCone} imply that the operator $\nabla F_\mu(w) \colon \TangentBundle_w \Disk \to \TangentBundle_w^* \Disk$ is continuously invertible at each point $w \in \Disk$.
By \cite[Proposition 1]{MR2968868}, there is always a step size $\tau_k$ satisfying \eqref{cond:armijo}--\eqref{cond:otherwise}. Thus the sequence $(w_k)_{k \in \N}$ is well-defined.
The Armijo condition \eqref{cond:armijo} guarantees $\varPsi_\mu(w_k) < \varPsi_\mu(0)$ and thus \cref{lem:generalbound} implies that $(w_k)_{k \in \N}$ stays within a ball $B$ of finite hyperbolic radius.
By \cref{lem:Uniform Convexity}, this in turn implies that there are $0 < \lambda \leq \varLambda < \infty$ such that
the self-adjoint linear operators $A_{\alpha,k} \ceq -\nabla F_\mu (w_k) +  \regparam \, \nabs{F_{\mu}(w_k)}_g^2 \, \id_{\TangentBundle_{w_k}\Disk}$ satisfy the uniform bound
\begin{align*}
	\lambda \, \id_{\TangentBundle_{w_k}\Disk}
	\preceq	
	A_{\alpha,k} 
	\preceq
	\varLambda \, \id_{\TangentBundle_{w_k}\Disk}
	\quad \text{for all $k \in \N$.}
\end{align*}
Notice that $\varPsi_\mu$ is Lipschitz continously differentiable.
Since it is convex with $\varPsi_\mu(0) =0$ and finite slope at $\nabs{\grad_g(\varPsi_\mu(0))}_g < \infty$,  it is also bounded from below on the ball $B$.
So \cite[Corollary 3]{MR2968868} implies so-called \emph{global convergence in the sense that}
\begin{align*}
	\nabs{F_\mu (w_k)}_g   = \nabs{\dd \varPsi_\mu (w_k)}_g \converges{k \to \infty} 0.
\end{align*}
Notice that these eigenvalues are uniformly bounded from below by $\lambda$.
So there must be a $k_0$ such that $q_k \ceq 4 \, \nabs{F_\mu (w_k)} \, \lambda_k^{-2} \leq 4 \, \nabs{F_\mu (w_k)} \, \lambda^{-2} < 1$ holds all $k \geq k_0$.
For those $k$, the Newton-Kantorovich theorem (for the Newton algorithm started at $w_k$, see \autoref{thm:quadratic convergence})
implies the existence of a conformal barycenter.  Moreover, since $\varPsi_\mu$ is strictly convex, there is exactly one such conformal barycenter $w_*(\mu)$. Hence the estimate from \autoref{thm:quadratic convergence} (with $w_0$ replaced by $w_k$) implies
\begin{align*}
	d_g(w_k , w_*(\mu)) 
	\leq  
	\tfrac{1}{2} \,\lambda_k \,  q_k^{(2^0)}
	=
	2 \, \nabs{F_\mu (w_k)}_g /  \lambda_k
	\leq 2 \, \nabs{F_\mu (w_k)}_g / \lambda
	\converges{k \to \infty} 0.
\end{align*}
We are left to show \emph{quadratic} convergence. 
Since we know now that the limit point $w_*(\mu)$ exists, this is fairly standard:
Because $F_\mu$ is \hbox{$1$-Lipschitz}, we obtain that
\begin{align*}
	\MoveEqLeft
	\nabs{\dd \varPsi_\mu(w_k) + \Hess(\varPsi_\mu)(w_k)(v_k,\cdot)}_g
	=
	\nabs{- F_\mu(w_k)  + A_{0,k} \, v_k}_g
	=
	\\
	&\leq
	\nabs{- A_{\alpha,k} \, v_k + A_{0,k} \, v_k}_g		
	=
	\alpha \, \nabs{F_\mu(w_k)}^2 \, \nabs{v_k}_g
	\\
	&=
	\alpha \, \nabs{A_{\alpha,k}  \, v_k}_g^2 \, \nabs{v_k}_g
	\leq 
	\alpha \, (1 +\alpha)^2\,\nabs{v_k}_g^3
	\converges{k \to \infty}
	0
	.
\end{align*}
Now \cite[Proposition 5]{MR2968868} shows that the condition \eqref{cond:otherwise}	 enforces $\tau_k = 1$ for all sufficiently large~$k$.
Finally, Propositions 7~and~8 from \cite{MR2968868} imply quadratic convergence of $w_k$ towards $w_*(\mu)$.
\end{proof}

Due to the regularization, the line search is seldom required in practice. In fact, it may cause some problems when run with finite precision: All computations involving the shift transformation suffer a slight loss of precision. So when the slope $f_k'(0) = \dd \varPsi_{\mu_k} \, u_k$ is already very close to $0$, the Armijo condition may just not be justifiable due to fact that $f_k(\tau_k) = \varPsi_{\mu}(w_{k+1})$ cannot be computed arbitrarily well. Fortunately, this typically happens only when the Newton-Kantorovich condition $q_k <1$ is already satisfied (here $q_k \ceq 4 \, \nabs{F_{\mu_k}(0)} / \lambda_k^2$ and $\lambda_k$ is the smallest eigenvalue of $-\nabla F_{\mu_k}(0)$, see also \autoref{thm:quadratic convergence}).
So one is better off  by just putting $\tau_k = 1$ and skipping the check for the Armijo condition whenever $q_k<1$. Putting also $\alpha = 0$ lets the method fall back to Newton's method and its convergence is then guaranteed by \autoref{thm:quadratic convergence}.
Since one has to compute $\nabla F_{\mu_k}(0)$ anyway, computing the smallest eigenvalue does not really increase the complexity of the algorithm when $d$ is small. 
Moreover, it allows to use the condition 
\begin{align}
	q_k<1 \qand 2 \, |\nabla F_{\mu_k}(0)| / \lambda_k < \varepsilon
	\label{eq:StoppingCriterion}
\end{align} 
as reliable stopping criterion and as an a\,posteriori error bound.
Indeed, the residual $|F_{\mu_k}(0)|$ is a very bad predictor of the distance between $w_k$ and $w_*(\mu)$: It typically underestimates the distance and it does so by orders of magnitude when the smallest eigenvalue of $\Hess(\varPsi_\mu)(w_*(\mu))$ is tiny.

It might also be noteworthy that the Abikoff-Ye iteration is of the form
\begin{align*}
	u_k = F_{\mu_k}(0) = - \grad(\varPsi_{\mu_k}(0))
	\qand
	w_{k+1} = \Shift(-w_k, \exp_0(\tau_k \, u_k)) =  \Shift(-w_k, 2\, u_k)
\end{align*}
with step size $\tau_k \to 2$, for $k \to \infty$. 
Thus it is basically the method of steepest descent, and adding a line search as above would also make this method globally convergent (but of course, only with linear convergence rate).

\section{Experimental results}
\label{sec:experiments}

We now give some examples of the performance of our methods in practice. We start with the Douady-Earle extension which was the original motivation for studying conformal barycenters (see \cite{MR0857678}).
Afterwards, we show a couple of examples for polygonal closure which was our initial motivation. 

\subsection{Douday-Earle extension}

\begin{definition}\label{def:DouadyEarleExtension} 
Suppose we are given a nonconstant, continuous map $\gamma \colon \Sphere_1 \to \Sphere_2$ between the unit spheres $\Sphere_1$ and $\Sphere_2$ in the Hilbert spaces $H_1$ and $H_2$. 
Suppose that $H_1$  is finite-dimensional; then
there is a uniques rotation-invariant Borel probability measure~$\nu_0$ on $\Sphere_1$.
For each $z \in \Disk_1$, one defines the measure $\nu_z \ceq \ext \Shift_z^\pull \, \nu_0$ via pullback along the shift transformation $\Shift_z$.
Clearly, this measure has the property that the conformal barycenter of $\nu_z$ is $z$ itself, i.e., $w_*(\nu_z) = z$. 
Since $\gamma$ is nonconstant and continuous, the measure $\gamma_\push \, \nu_z$ cannot be concentrated within two single points. So it is stable and there is a unique conformal barycenter $w_*(\gamma_\push \, \nu_z)$. Thus we may define the \emph{Douady-Earle extension} $\DouadyEarle(\gamma) \colon \Disk_1 \to \Disk_2$ of $\gamma$ by setting
$E(\gamma)(z) \ceq w_*(\gamma_\push \, \nu_z)$.
\end{definition}

\begin{figure}
	\capstart %ensures that hyperlink will jump to the top of this image
\newcommand{\inc}[2]{\begin{tikzpicture}%
    \node[inner sep=0pt] (fig) at (0,0) {\includegraphics{#1}};
	\node[above = -1ex, right= 0ex] at (fig.south west) {{\scriptsize #2}};    
\end{tikzpicture}}%
\presetkeys{Gin}{
	trim = 0 0 0 0, 
	clip = true,  
	angle = 0,
	width = 0.19\textwidth
}{}%
\begin{center}%
	\raisebox{-0.5\height}{\includegraphics{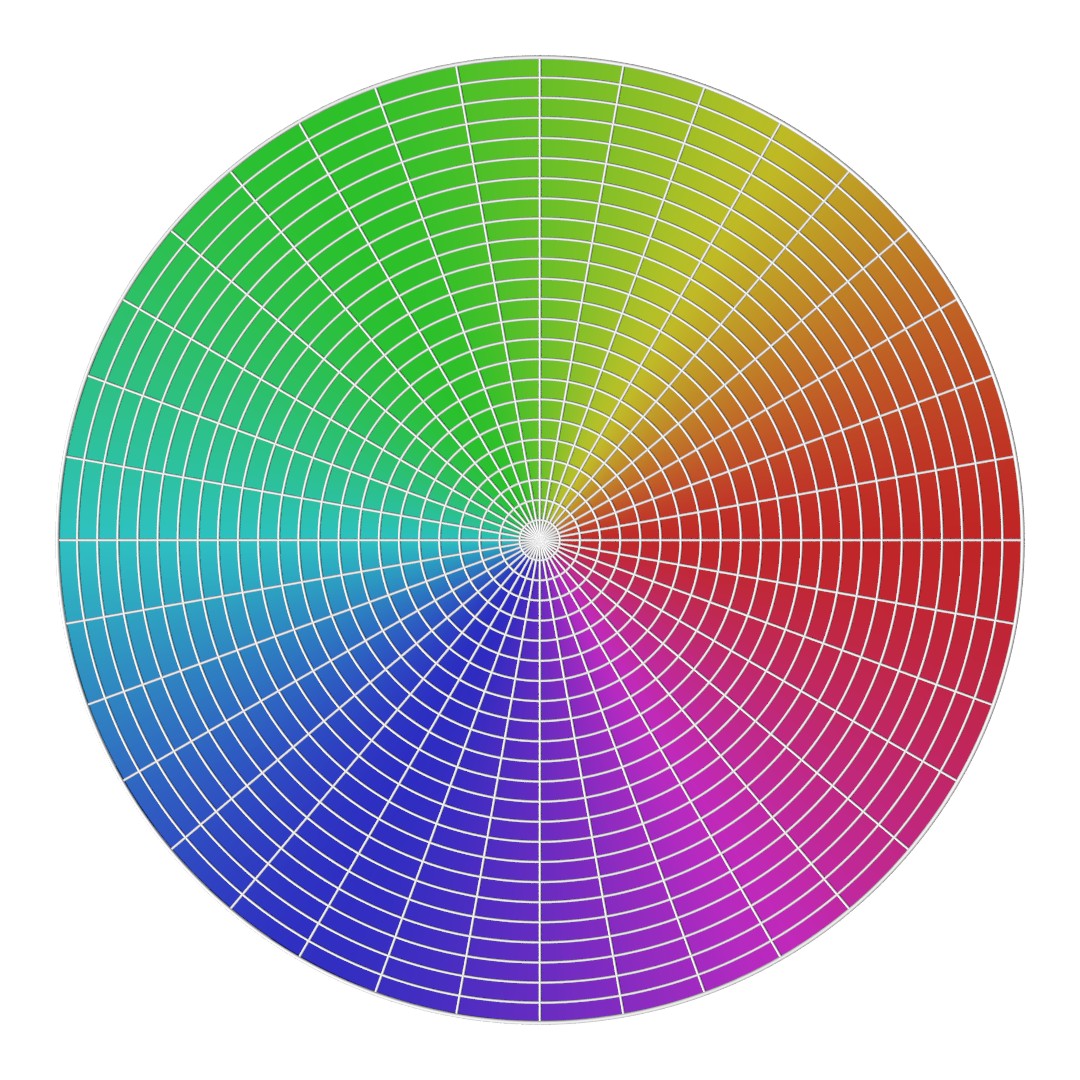}}%
	$\stackrel{\varphi_1}{\longrightarrow}$%
	\raisebox{-0.5\height}{\includegraphics{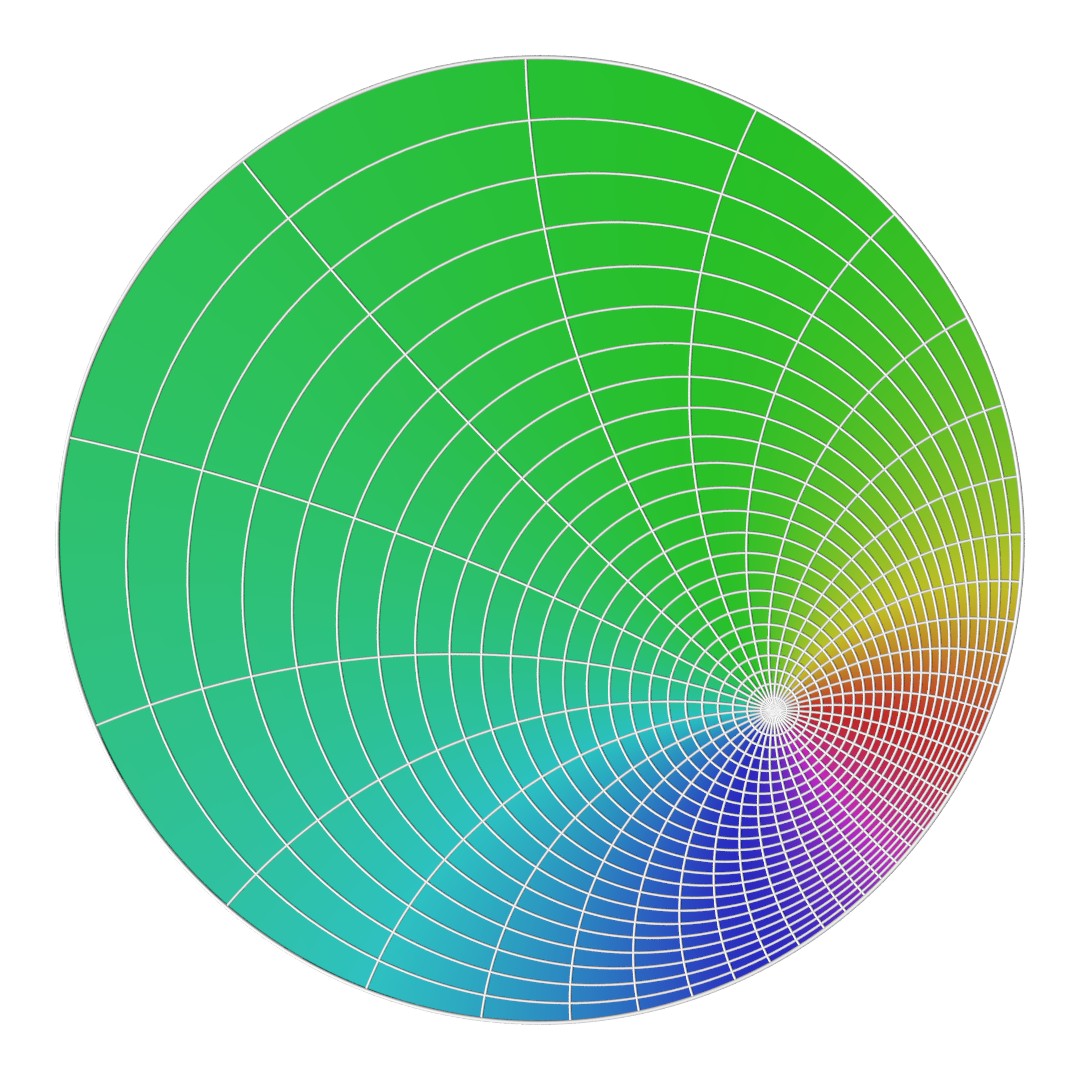}}%
	\raisebox{-0.5\height}{\inc{Tennisball_E_gamma_}{$E(\gamma)$}}%
	\raisebox{-0.5\height}{\inc{Tennisball_E_gammaphi_}{$E(\gamma\circ \varphi_1)$}}%
	\raisebox{-0.5\height}{\inc{Tennisball_E_gamma_phi}{$E(\gamma)\circ \varphi_1$}}%	
	\\%
	\raisebox{-0.5\height}{\includegraphics{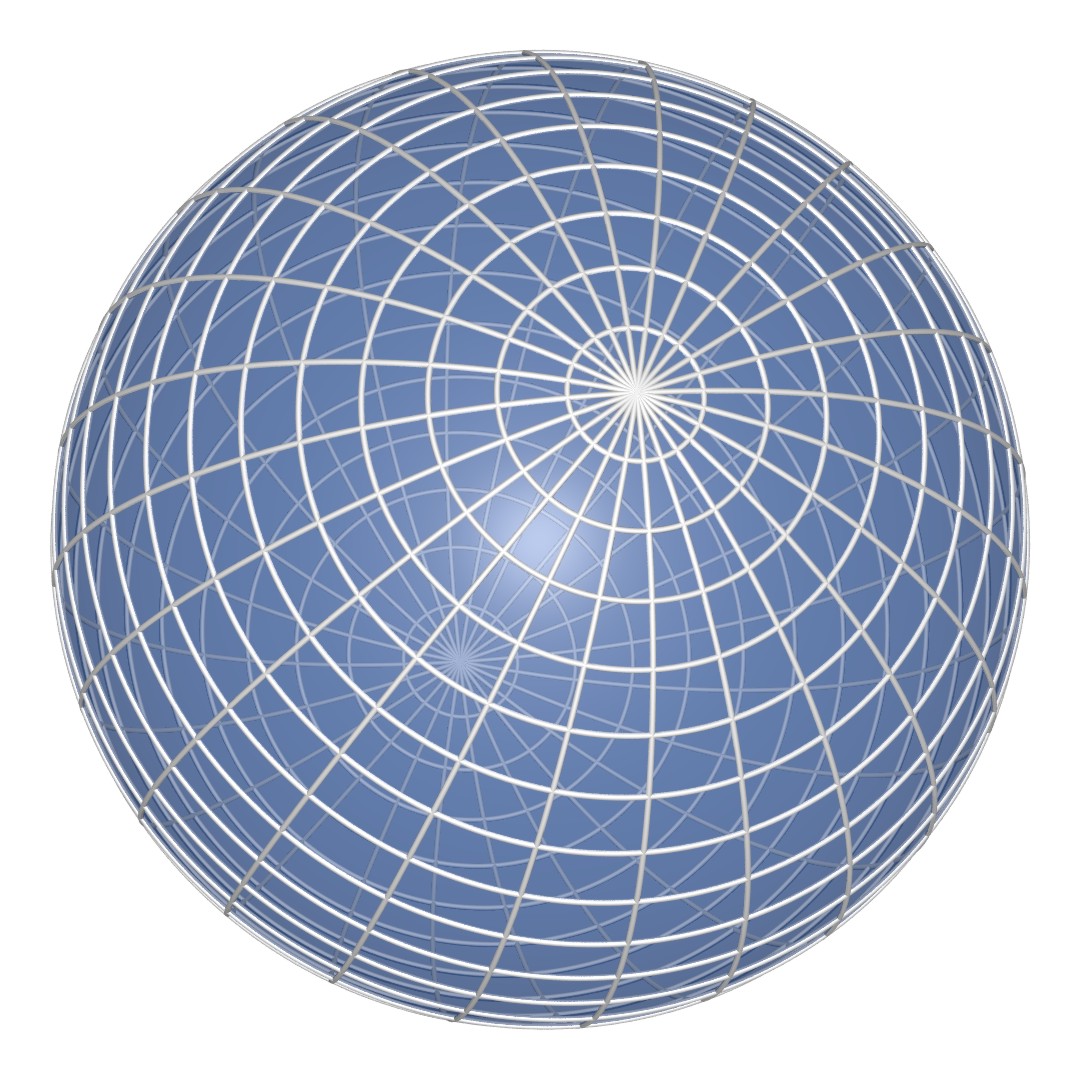}}%
	$\stackrel{\varphi_2}{\longrightarrow}$%
	\raisebox{-0.5\height}{\includegraphics{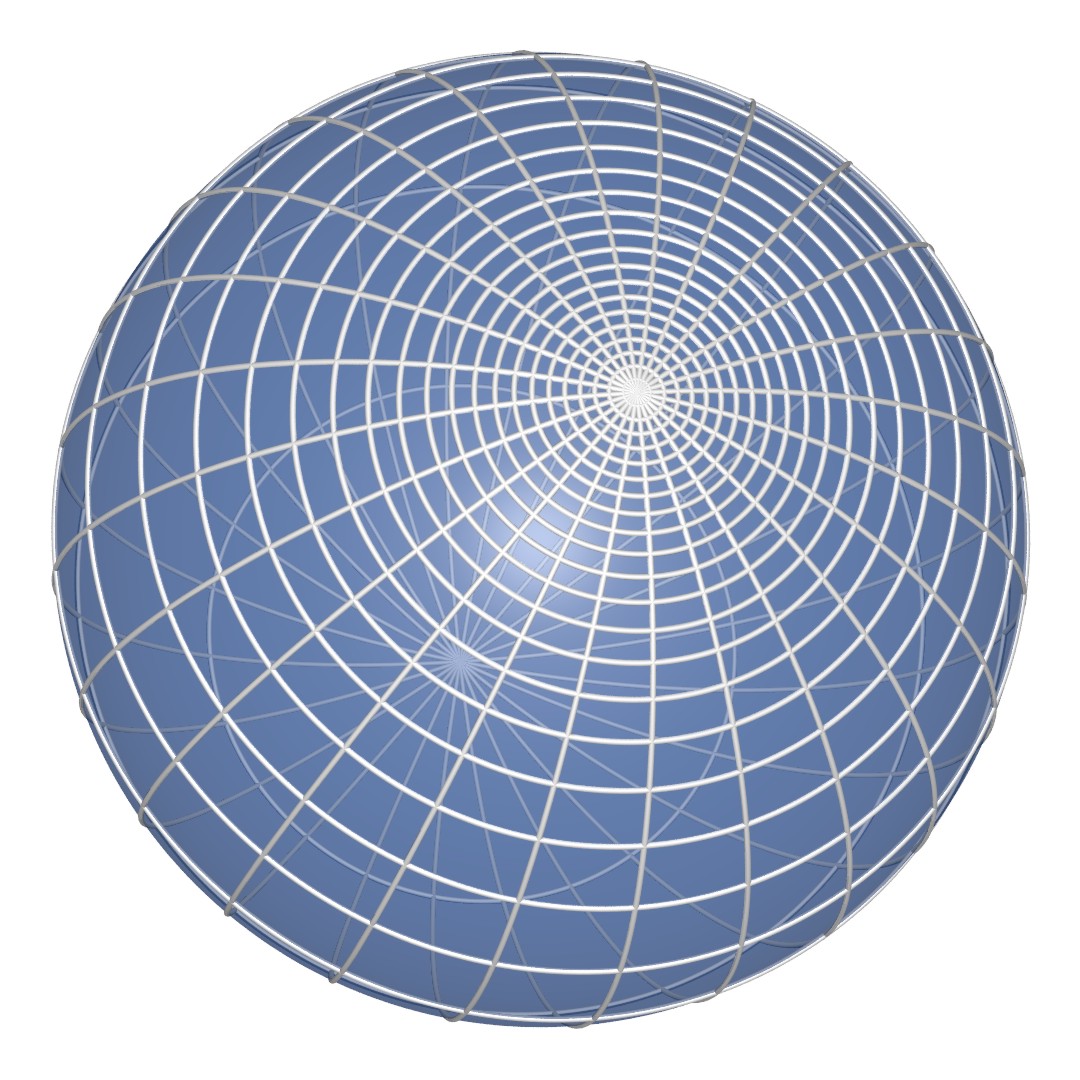}}%
	\raisebox{-0.5\height}{\inc{Star_E_gamma_}{$E(\gamma)$}}%
	\raisebox{-0.5\height}{\inc{Star_E_psigamma_}{$E(\varphi_2 \circ \gamma)$}}%
	\raisebox{-0.5\height}{\inc{Star_psi_E_gamma_}{$\varphi_2 \circ E(\gamma)$}}%
	\caption{The Douady-Earle extensions of several maps from $\Sphere^1$ to $\Sphere^2$. On the top row, we see the effect of precomposing with a M\"obius transformation of $\Disk^2$; on the bottom row the effect of postcomposing with a M\"obius transformation of $\Disk^3$. These surfaces are computed by damped, regularized Newton iterations in hyperbolic 3-space (\cref{subsec:main result}).
}%
	\label{fig:DouadyEarleExtensions}%
\end{center}
\end{figure}

By \eqref{eq:TransformationRulesforF}, the extension operator $E$ is ``conformally natural''.
This means that if $\varphi_1$ is a M\"obius transformation of (the closure of) $\Disk_1$ then $\DouadyEarle(\gamma \circ \varphi_1) = \DouadyEarle(\gamma) \circ \varphi_1$. Further, if $\varphi_2$ is a M\"obius transformation of (the closure of) $\Disk_2$ then $\DouadyEarle(\varphi_2 \circ \gamma) = \varphi_2 \circ \DouadyEarle(\gamma)$. These properties are illustrated in~\cref{fig:DouadyEarleExtensions}.

One can approximate $\DouadyEarle(\gamma)$ by approximating $\nu_0$ by a discrete $n$-point measure $\nu_{0,n}$.
If $\gamma$ is a sufficiently smooth and for fixed $z \in \Disk_1$, 
the conformal barycenter $\DouadyEarle_n(\gamma)(z)$ 
of $\gamma_\push \, \ext \Shift_z^\pull \,\nu_{0,n}$ will converge to $\DouadyEarle(\gamma)(z)$ provided that $\nu_{0,n}$ converges in \hbox{$1$-Wasserstein} distance to $\nu_0$.
Approximating $\nu_0$ is particularly easy if $\Sphere_1 = \Sphere^1$ is the $1$-dimensional sphere: We may choose uniformly distributed quadrature points and put $\nu_{0,n} = \tfrac{1}{n} \sum_{i=1}^n \updelta( \cos(\tfrac{2 \pi}{n}),\sin(\tfrac{2 \pi}{n}))$.
Since $\gamma_\push \, \ext \Shift_z^\pull \,\nu_{0,n}$ is also a discrete measure, we may compute its conformal barycenter by the method outlined in \cref{sec:NewtonRegularized}.
The reader might find it intriguing to try out the \emph{Mathematica} routine \texttt{DouadyEarleExtension}
provided by the package \texttt{ConformalBarycenter.m} in the electronic supplement.\footnote{See  \href{https://github.com/HenrikSchumacher/ConformalBarycenter}{https://github.com/HenrikSchumacher/ConformalBarycenter} for a maintained version.}
This routine computes $\DouadyEarle_n(\gamma)$ of a ``piecewise-linear'' curve $\gamma \colon \Sphere^1 \subset \R^2 \to \Sphere^2 \subset \R^3$. It is the very routine that we used to produce \cref{fig:DouadyEarleExtensions} and \cref{fig:DouadyEarleExtensions_by_Milnor}. 

This application is actually quite challenging: For $z$ close to the boundary of $\Disk_1$, the measure $\nu_z$ on $\Sphere_1$ and its pushforward $\gamma_\push \nu_z$ on $\Sphere_2$ are highly concentrated.
This means that the Newton-Kantorovich condition from~\cref{thm:quadratic convergence} may be far from being satisfied at the starting point because the Hessian is very degenerate. 
The method of steepest descent is notorious for having very oscillatory behavior
and for not making good progress in such regions.
Hence the method of Milnor-Abikoff-Ye should suffer immensely from degenerate Hessians.
Indeed, the left hand side of \cref{fig:DouadyEarleExtensions_by_Milnor} shows that the Milnor-Abikoff-Ye iteration has problems with computing the Douady-Earle extension in the boundary regions of the surfaces. There, 
the algorithm did not converge even after $1000$ iterations. 
Despite this, the damped, regularized Newton method (with regularization parameter $\alpha = 1$ and initialized with the Euclidean center of mass of $\gamma_\push \, \nu_z$) required typically less than a dozen iterations to decrease the Newton-Kantorovich error bound $\tfrac{1}{2} \, q \, \lambda_{\min}$ (see \cref{thm:quadratic convergence}) below $10^{-8}$ (see right hand side of \cref{fig:DouadyEarleExtensions_by_Milnor}). 
In the depicted setting, the surface is discretized by triangle meshes with $64309$ vertices and 
the uniform measure $\nu_0$ on $\Sphere^1$ is discretized by $n = 720$ quadrature points.
Our test machine\footnote{Intel Core i7 4980HQ CPU (2,8 GHz Quad-Core) with 16\,GB RAM.}
performed the task in about 2.6 seconds.

\begin{figure}
\begin{center}
	\capstart %ensures that hyperlink will jump to the top of this image
\presetkeys{Gin}{
	trim = 0 0 0 0, 
	clip = true,  
	angle = 0,
	width = 0.25\textwidth
}{}%
	\hfill
	\includegraphics{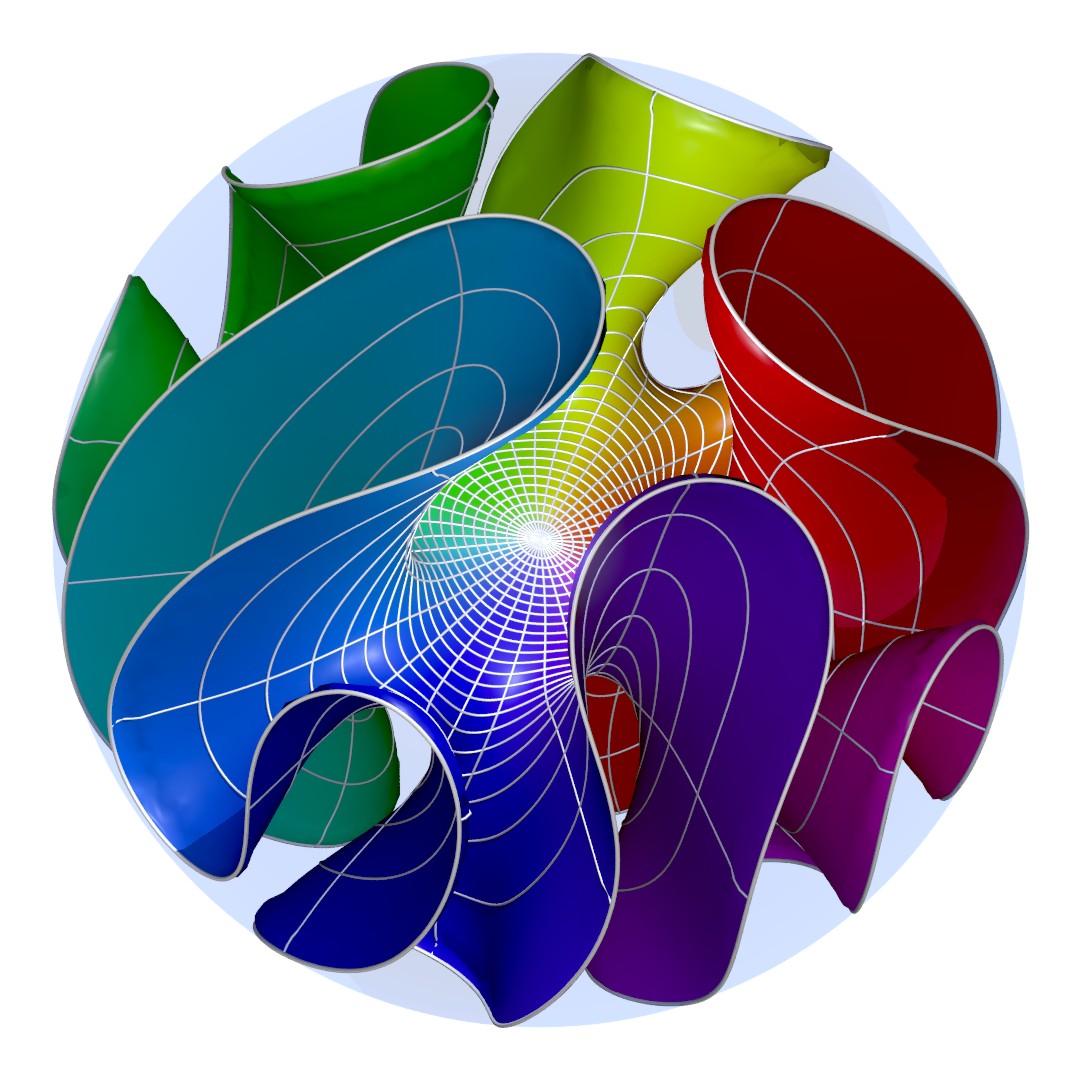}%
	\hfill	
	\includegraphics{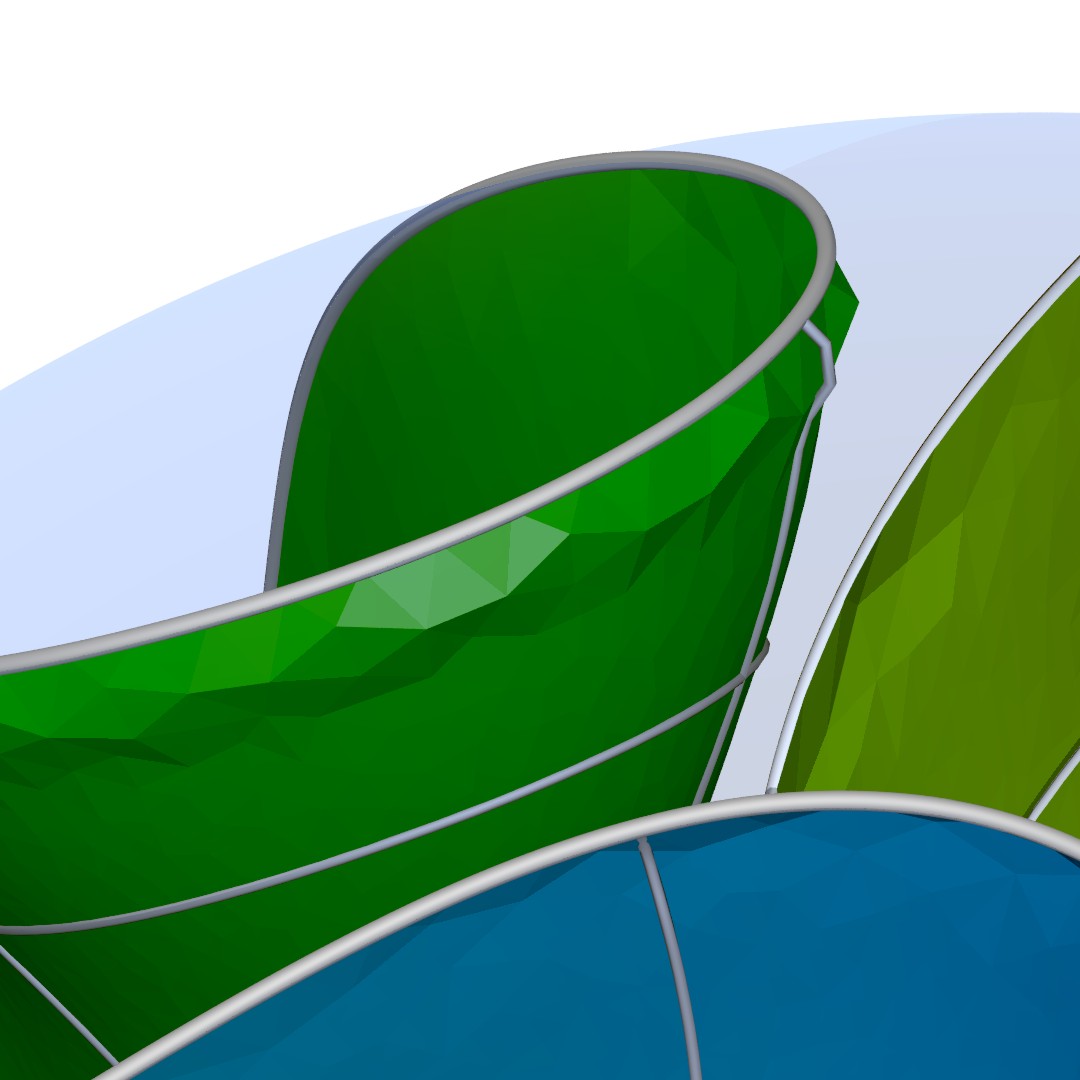}%	
	\hfill	
	\includegraphics{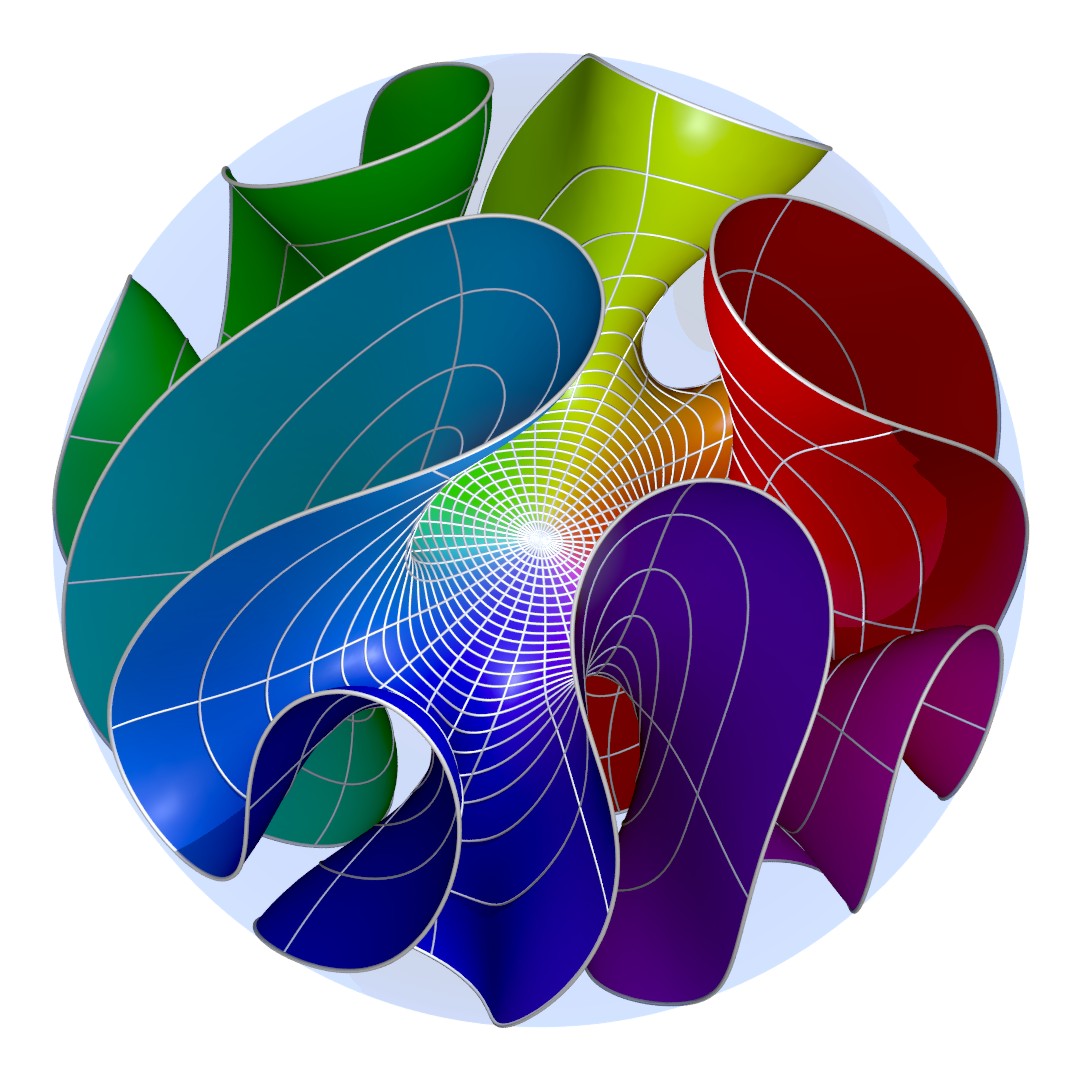}%
	\hfill	
	\includegraphics{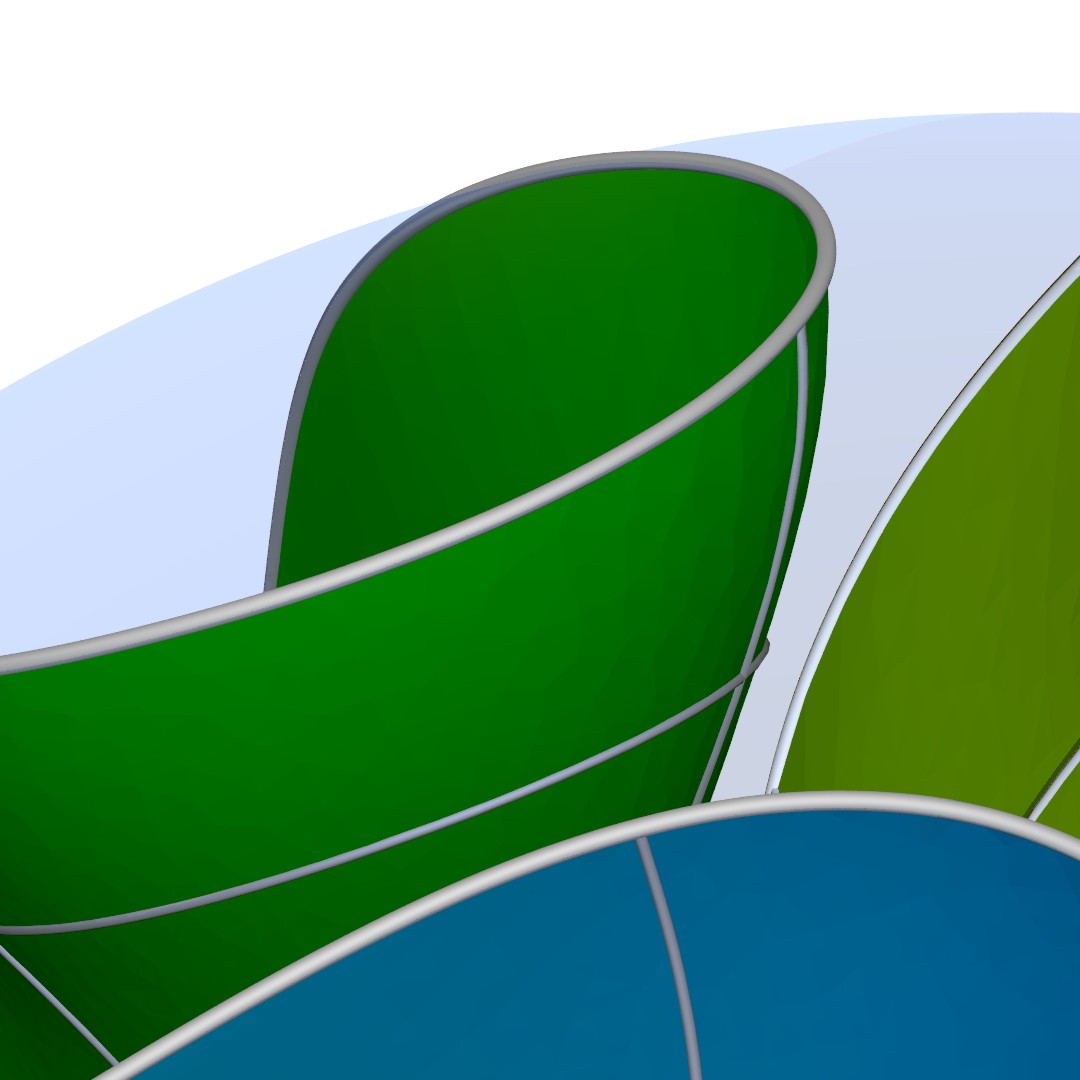}%	
	\hfill	
	\caption{\emph{Left:} Discrete Douady-Earle extension of curve on the 2-sphere, computed by Milnor-Abikoff-Ye method which was stopped after at most $1000$ iterations. 
	The close-up (with interpolation of surface normals deactivated) shows a lot of noise in the vicinity of the boundary and reveals that the algorithm has failed to converge.
	\emph{Right:} 
	In contrast, our regularized Newton method computes the points of the extension robustly and quickly (in at most four iterations and in about two iterations on average for a tolerance of order $10^{-8}$).
	}
	\label{fig:DouadyEarleExtensions_by_Milnor}
\end{center}
\end{figure}

\subsection{Polygon Closure}

\begin{figure}
	\capstart %ensures that hyperlink will jump to the top of this image
\begin{center}%
\begin{minipage}[c]{0.18\textwidth}%
\begin{center}%
	\includegraphics[trim = 0 60 0 60, 
		clip = true,  
		angle = 0,
		width = \textwidth
	]{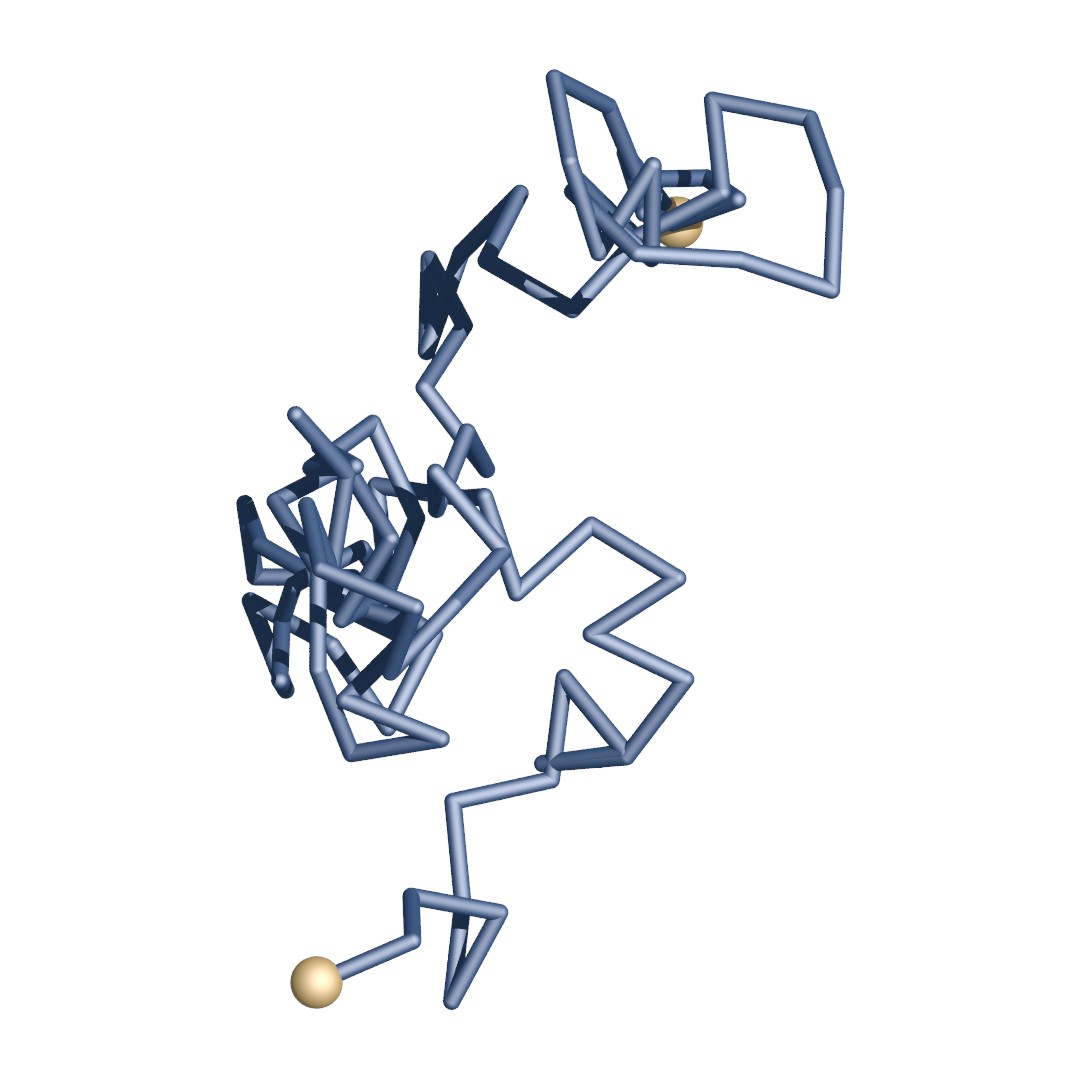}%
\\[-1ex]%
$\downarrow$%
\\[-.1ex]%
	\includegraphics[trim = 0 250 0 250, 
		clip = true,  
		angle = 0,
		width = \textwidth
	]{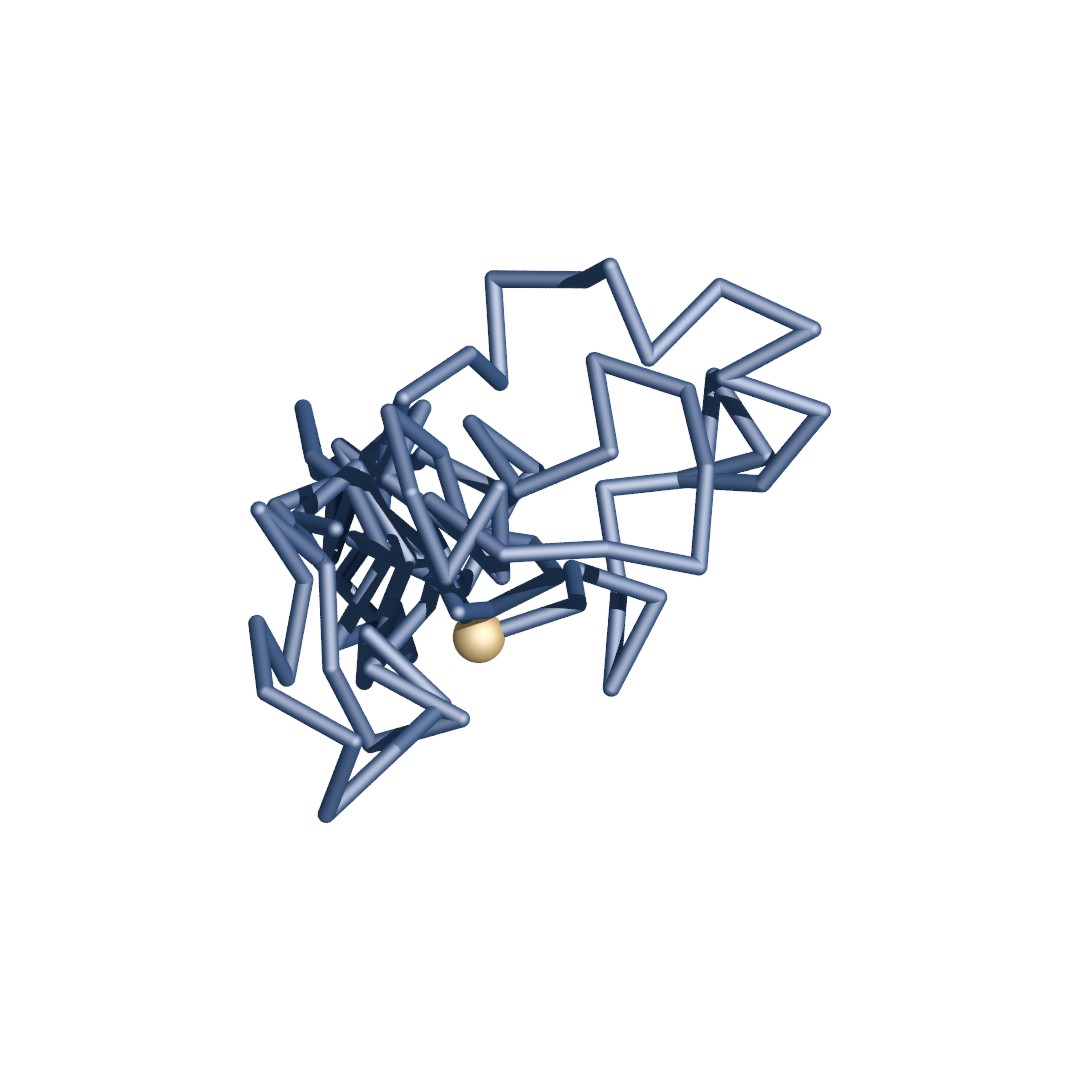}%
\end{center}%
\end{minipage}%
	\hfill
\presetkeys{Gin}{
	trim = 0 0 0 0, 
	clip = true,  
	angle = 0,
	height = 0.18\textheight
}{}%
	\raisebox{-0.5\height}{\includegraphics{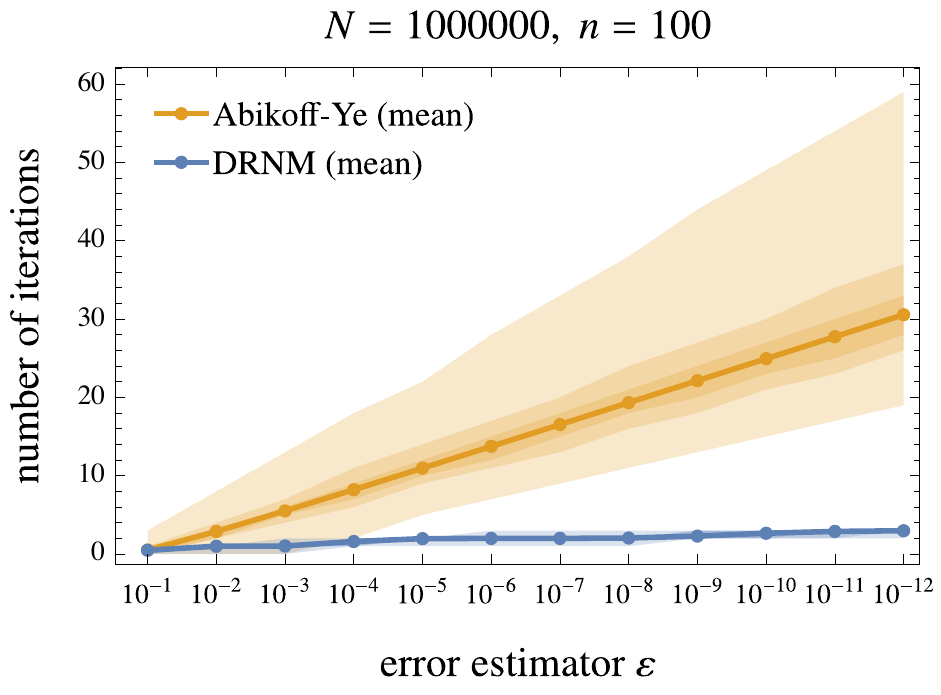}}%
	\hfill
	\raisebox{-0.5\height}{\includegraphics{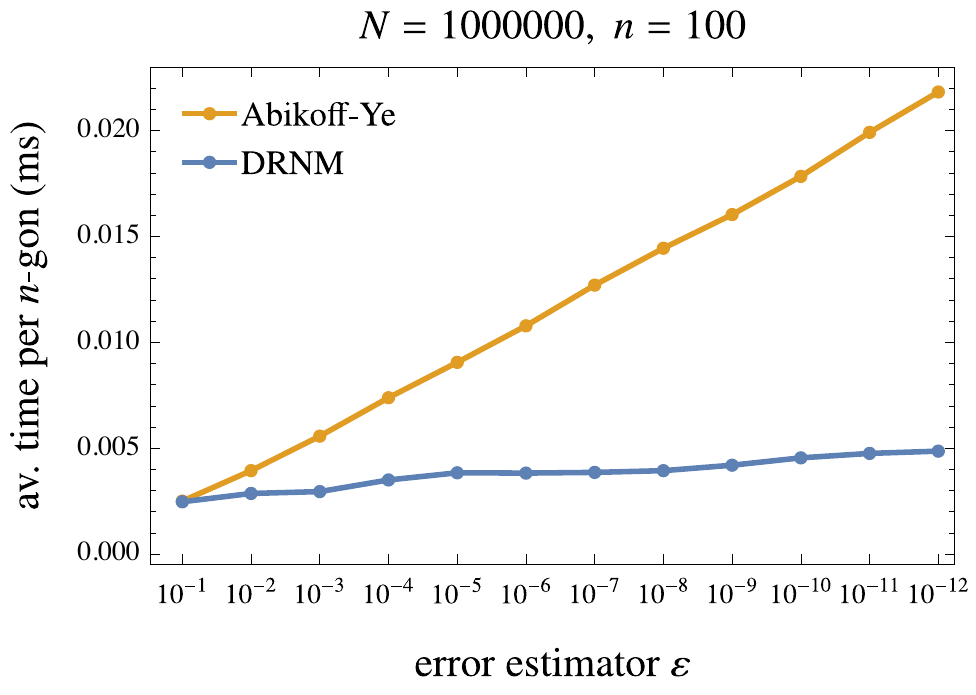}}%	
	\\[1ex]
\begin{minipage}[c]{0.18\textwidth}%
\begin{center}%
	\includegraphics[trim = 0 300 0 300, 
		clip = true,  
		angle = 0,
		width = \textwidth
	]{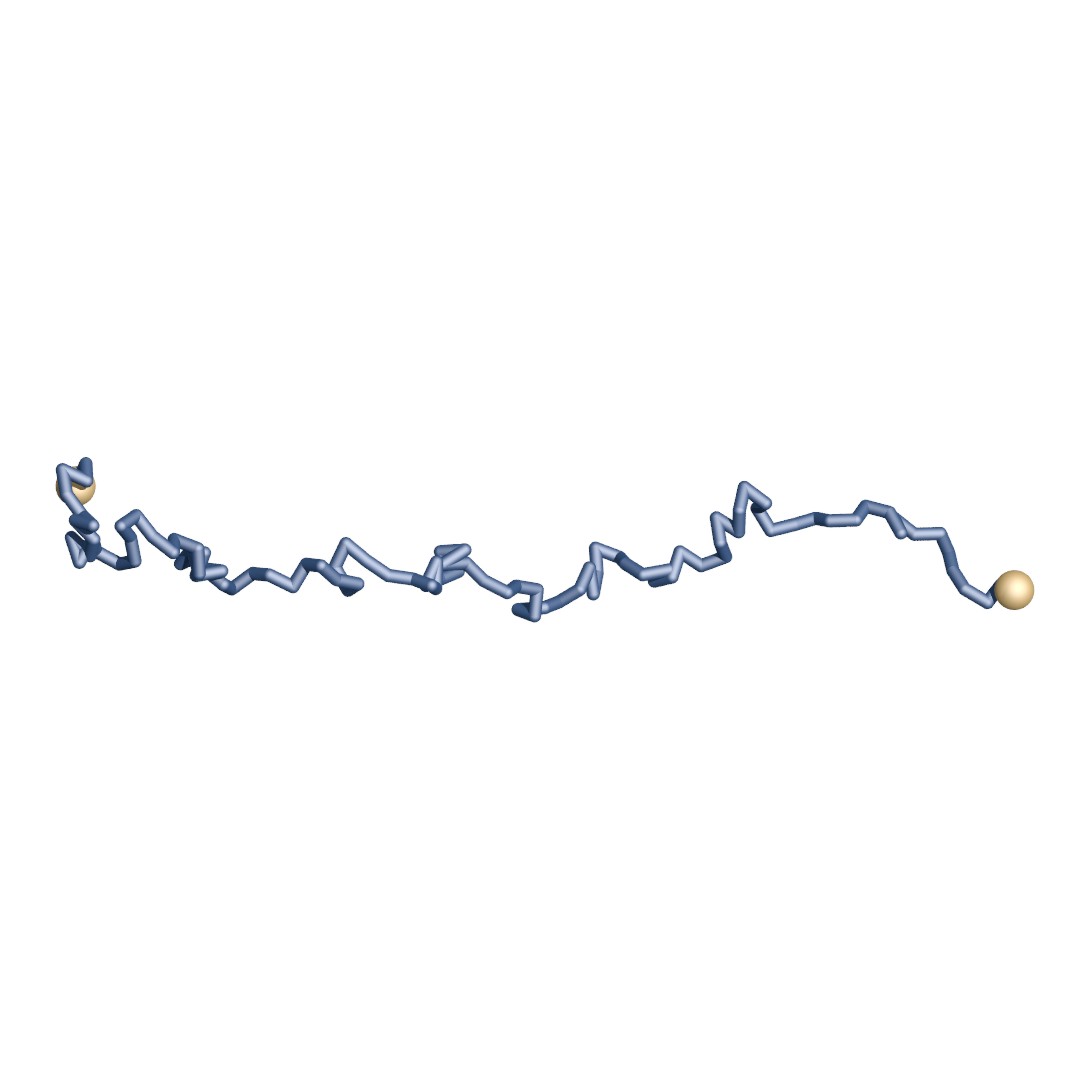}%
\\[-1ex]%
$\downarrow$%
\\[-.1ex]%
	\includegraphics[trim = 0 320 0 300, 
		clip = true,  
		angle = 0,
		width = \textwidth
	]{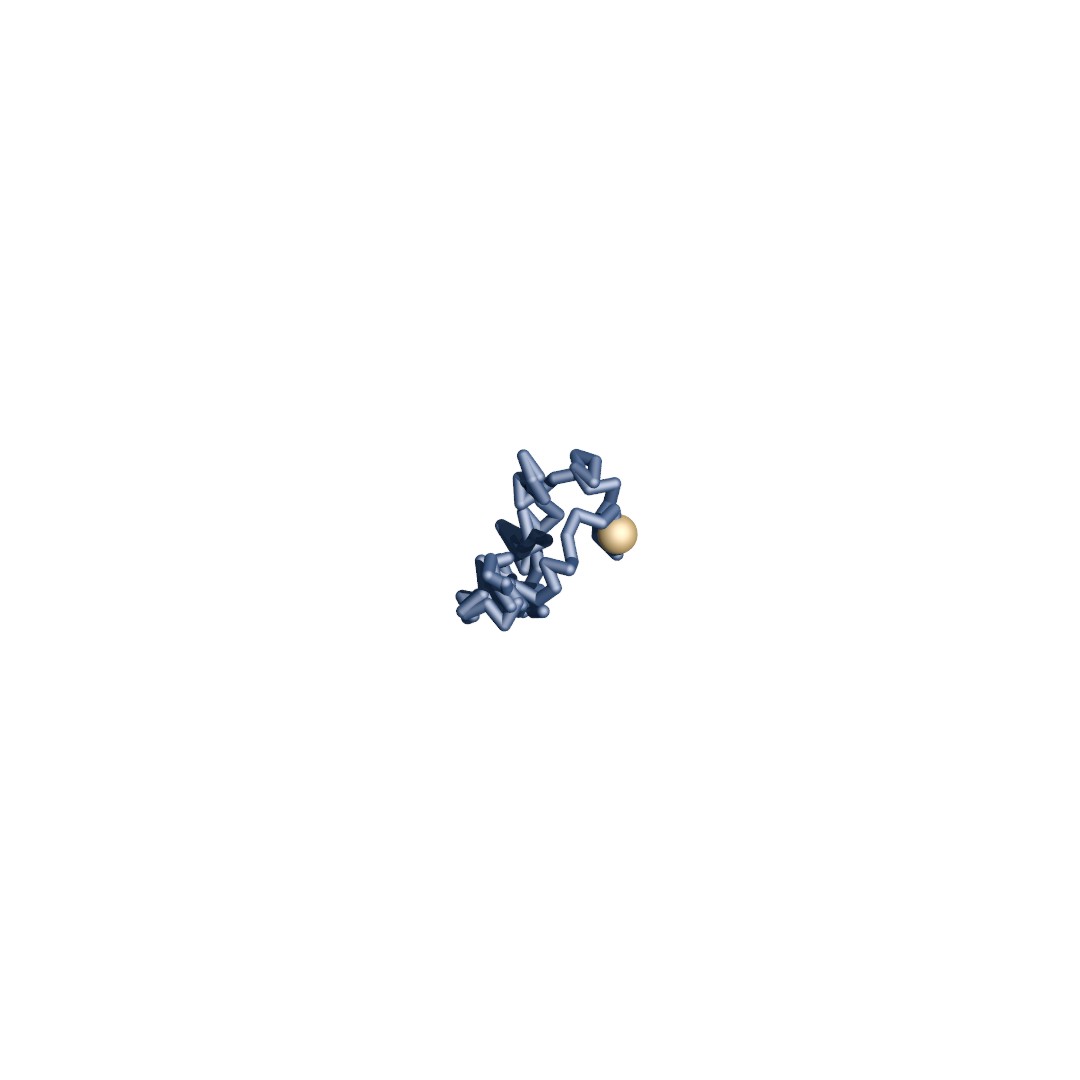}%
\end{center}%
\end{minipage}%
	\hfill
\presetkeys{Gin}{
	trim = 0 0 0 0, 
	clip = true,  
	angle = 0,
	height = 0.18\textheight
}{}%
	\raisebox{-0.5\height}{\includegraphics{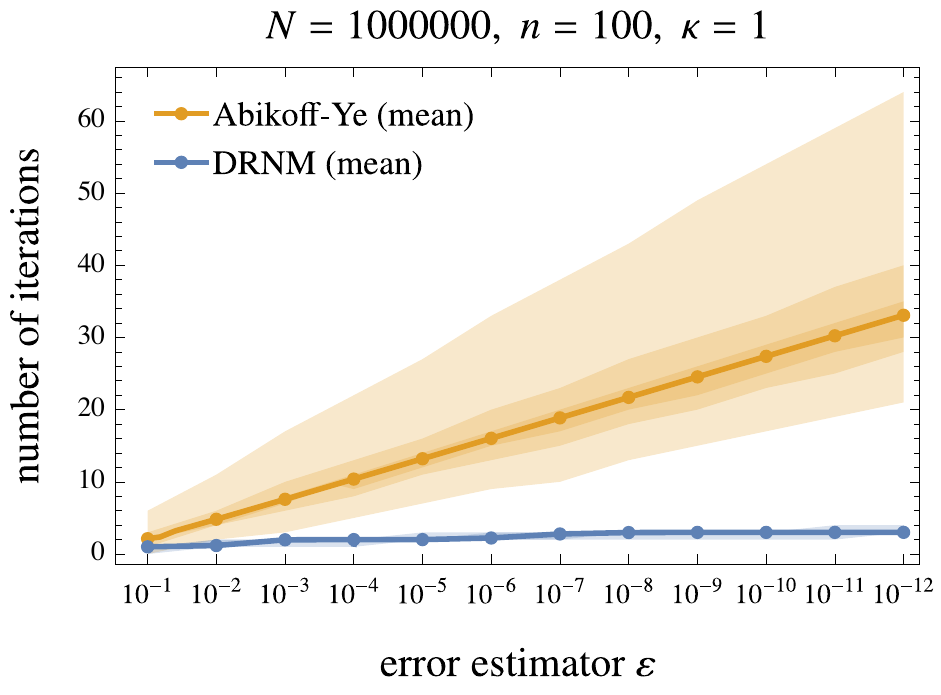}}%
	\hfill
	\raisebox{-0.5\height}{\includegraphics{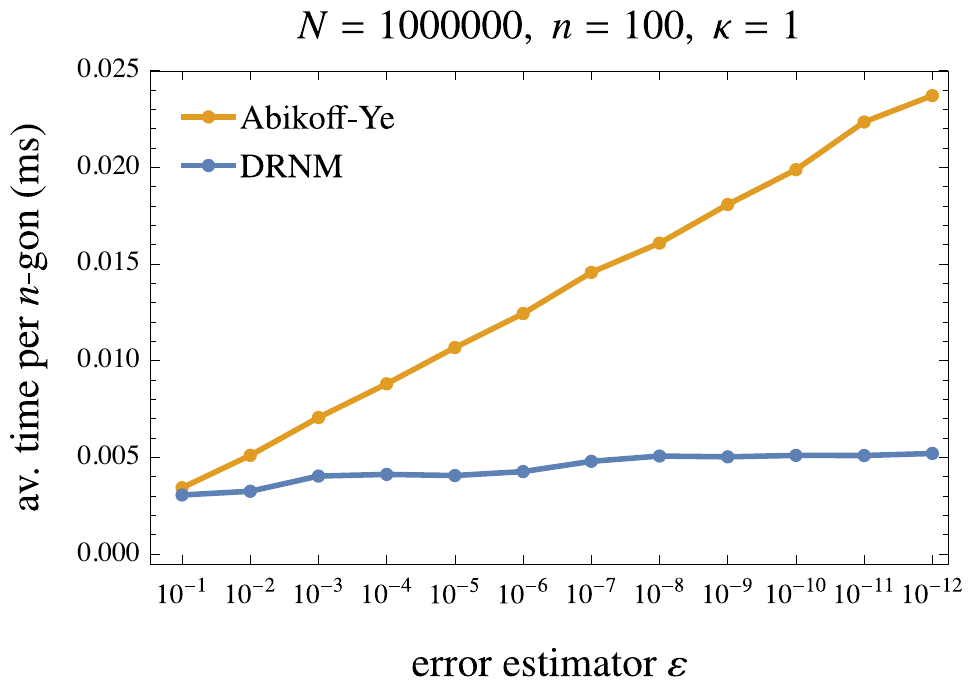}}%	
\\[1ex]%
\begin{minipage}[c]{0.18\textwidth}%
\begin{center}%
	\includegraphics[trim = 0 300 0 300, 
		clip = true,  
		angle = 0,
		width = \textwidth
	]{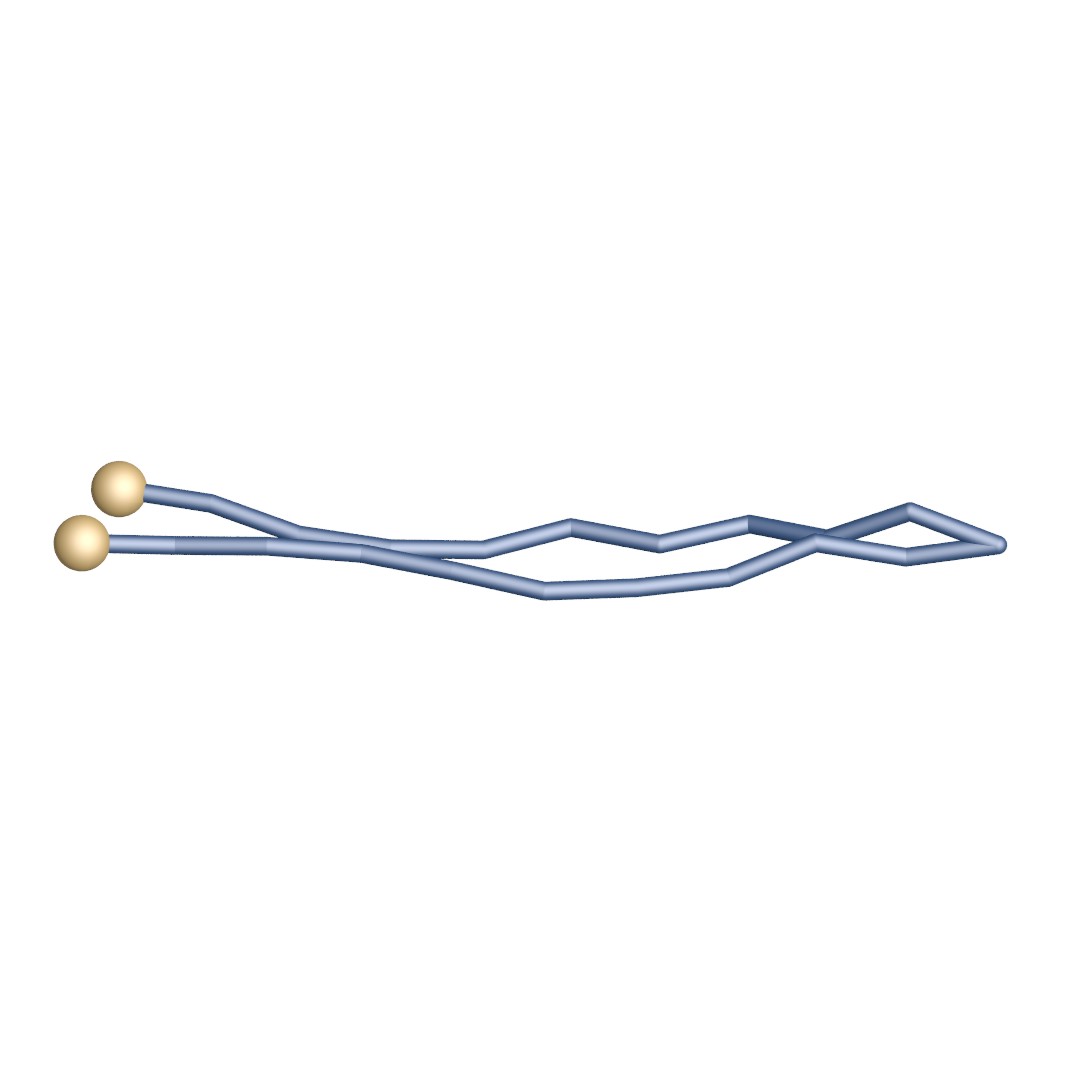}%
\\[-1ex]%
$\downarrow$%
\\[-.1ex]%
	\includegraphics[trim = 0 320 0 300, 
		clip = true,  
		angle = 0,
		width = \textwidth
	]{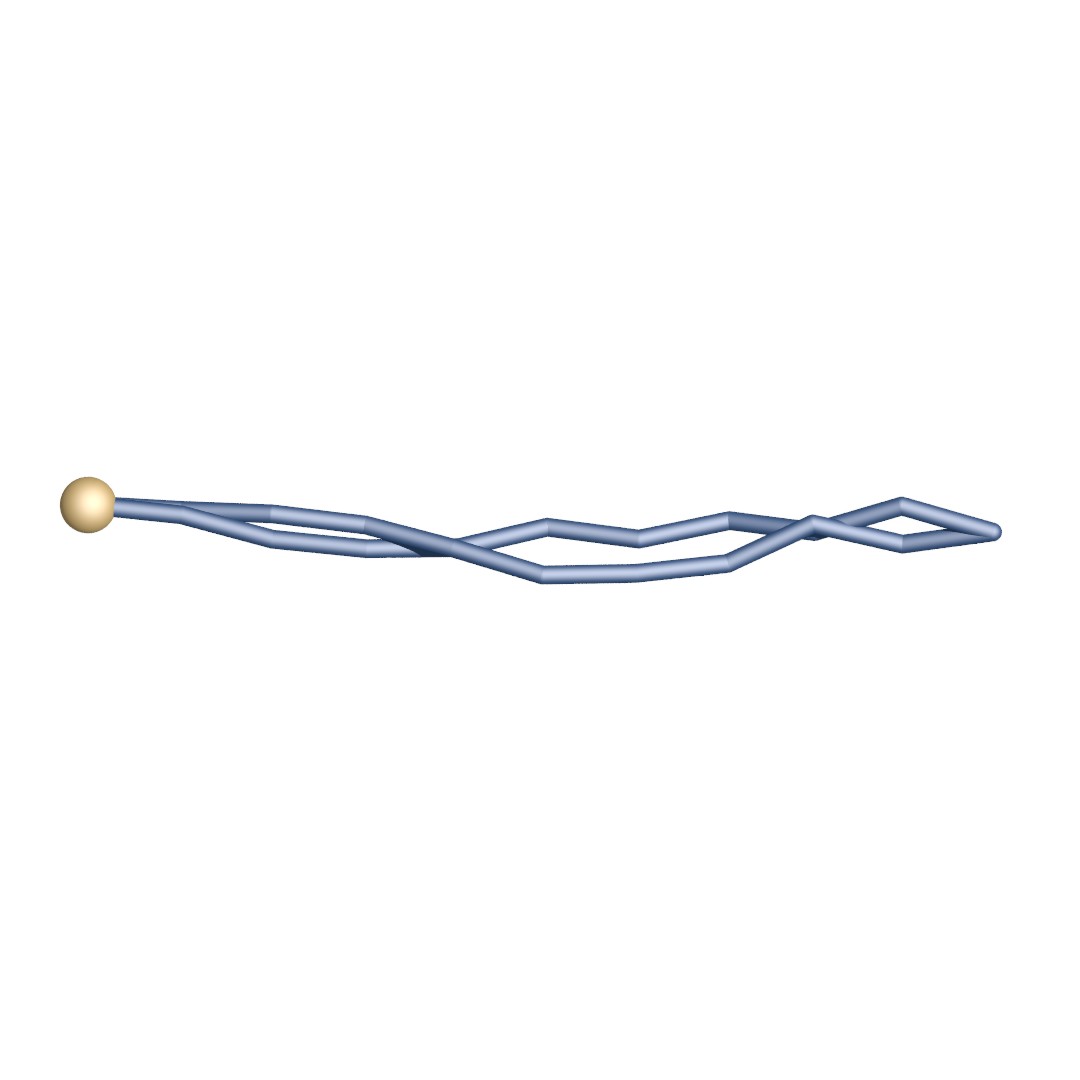}%
\end{center}%
\end{minipage}%
	\hfill
\presetkeys{Gin}{
	trim = 0 0 0 0, 
	clip = true,  
	angle = 0,
	height = 0.18\textheight
}{}%
	\raisebox{-0.5\height}{\includegraphics{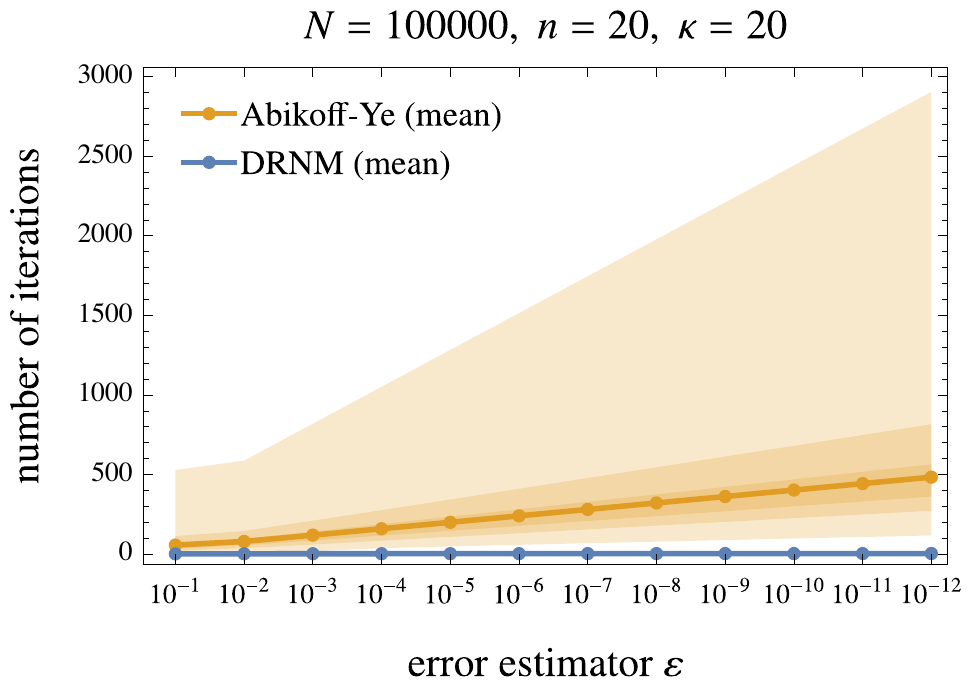}}%
	\hfill
	\raisebox{-0.5\height}{\includegraphics{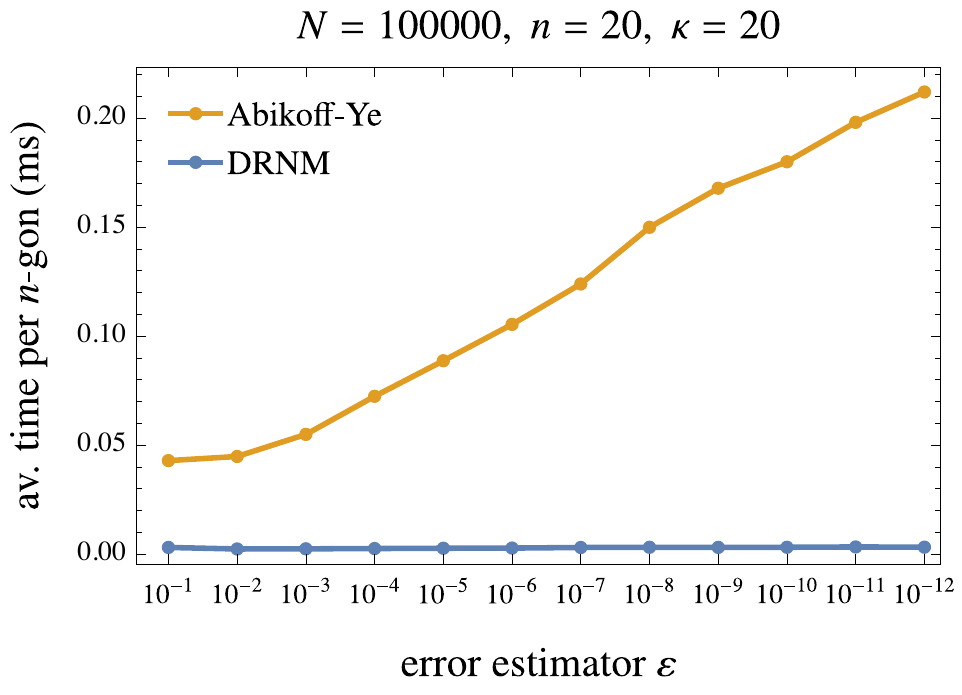}}%	
	\label{fig:PolygonClosureExamples}%
\end{center}%
\caption{%
Performance comparison between Abikoff-Ye iteration and the damped, regularized Newton method (DRNM).
We applied both methods to $N$ randomly generated polygonal lines with $n$ equal length edges and recorded the average number of iterations and the average time per polygonal line.
The shaded regions in the iteration plots indicate the region between the $p/2$- and $(100-p/2)$-percentiles, where $p \in \set{0,10,50}$.
\emph{Top:} Unit edge vectors uniformly sampled over $\Sphere^2$. 
\emph{Center:} 
Unit edge vectors sampled from the von Mises-Fisher distribution with $\kappa =1$.
\emph{Bottom:} 
An example that illustrates that ``almost closed'' does not necessarily imply ``easy to close''. 
DRNM required at most 5 iterations per polygon whereas Abikoff-Ye required almost 3000 iterations in some cases.
}
\end{figure}

We next present the results of experiments on polygon closures. 
Recall from~\cref{sec:intro} that Millson-Kapovich showed that the space of polygons with fixed edgelengths is a quotient of the space of nice semi-stable point measures on the sphere by the action of the conformal group. 
So we can close a polygonal line by computing the conformal barycenter $w_*(\mu)$ of $\mu = \sum_{i=1}^n \omega_i \, \updelta(x_i)$ and by applying the shift transformation $\ext \Shift_{w_*(\mu)}$ to the unit edge vectors $x_1,\dotsc,x_n$. Here $\omega_1,\dotsc,\omega_n$ are the edgelengths of the polygonal line, normalized to satisfy $\omega_1 + \dotsm + \omega_n =1$.

\cref{fig:PolygonClosureExamples}	 shows performance results of Abikoff-Ye iteration and of the damped, regularized Newton method (DRNM) that we analyzed here.
Both algorithms are accessible from the routine \texttt{ConformalBarycenter} in the aformentioned \emph{Mathematica} package.
They were applied to polygonal lines whose unit edge vectors were randomly sampled over the unit sphere $\Sphere^2$: 
In the first example we sampled the unit edge vectors uniformly over $\Sphere^2$. 
In the second example, we sampled from the spherical von Mises-Fisher distribution $\varrho(x) = C_{\kappa} \exp(\kappa \, \ninnerprod{\xi,x})$ with $\kappa =1$. Here $\xi \in \Sphere^{2}$ is an arbitrary unit vector and $C_{\kappa}$ is a normalizing constant such that $\int_{\Sphere^{2}} \varrho(x) \, \dd \mathcal{H}^{2}(x) = 1$.
For this distribution, the unit edge vectors are slightly concentrated around $\xi$, resulting in ``straighter'' polygonal lines. Surprisingly, this does not substantially increase the difficulty of the closing problem.
We see that both iteration count and runtime of the Abikoff-Ye iteration grow linearly in terms of the desired ``accuracy'' $\log(1/\varepsilon)$, where we used \eqref{eq:StoppingCriterion} as stopping criterion.
This is expected as the Abikoff-Ye iteration boils down to steepest descent. In contrast, DRNM performs like Newton's method (because we disabled line search as soon as $q_k<1$) with iteration count and runtime depending sublinearly on $\log(1/\varepsilon)$.
It can also be seen  from \cref{fig:PolygonClosureExamples} that the spread of iteration counts for DRNM is substantially smaller than for Abikoff-Ye iteration.\footnote{The per-polygon timings of both methods are just too small to be timed accurately, so we refrained from determining their spread.}

The last example does probably not reflect the typical use case, but it points directly onto the weak spot of the Abikoff-Ye iteration.
Here we sampled from a much more concentrated spherical von Mises-Fisher distribution ($\kappa =20$), but we also introduced a sharp kink of about $180^\circ$ after half the number of edges. This produces ``almost closed''  polygonal lines (i.e., $w_*(\mu) \approx 0$) whose unit edge vectors are concentrated around two distinct points on the sphere.
As we saw in \cref{prop:SmallestEigenvaluesAndCone}, this leads to a very high condition number of the Hessian of $\varPsi_{\mu}$. It is well-known that this case is particularly bad for steepest descent if this happens at the minimizer $w_*(\mu)$. We can see the problem illustrated in the third row of \cref{fig:PolygonClosureExamples}.
In contrast, DRNM is largely unaffected by this severe loss of conditioning; it converges in no more than five iterations.

\section{Conclusion and Future Directions}

We have now given two algorithms for computing the conformal barycenter: Newton's method with fixed stepsize~\eqref{eq:Newton2}
and the regularized Newton method with line 
search~\eqref{eq:Newton3.1}--\eqref{eq:Newton3.2}. The first algorithm is primarily interesting as a device for proving theorems:~\cref{cor:time bound under NK conditions} and~\cref{thm:how often} show that in all but exponentially few cases, the conformal barycenter can be approximated to fixed accuracy in linear time. In practical implementations, we recommend the use of~\eqref{eq:Newton4.1}--\eqref{eq:Newton4.3}, which converges for all input cases with a solution~(\cref{thm:main}) and in practice does so even faster than~\eqref{eq:Newton2}. 
We note that checking the Newton-Kantorovich condition of~\cref{cor:time bound under NK conditions} reduces to computing the smallest eigenvalue of a $\AmbDim \times \AmbDim$ matrix, so one can switch between algorithms if a hard bound is desirable. 

We think that it is probably possible to establish an explicit time bound for our second algorithm in terms of the parameters $\varepsilon$ and $\delta$ from~\cref{lem:Uniform Convexity} and~\cref{lem:generalbound}. However, we have deferred this problem until a compelling reason to solve it arises.

Our original interest in the conformal barycenter was motivated by its presence in the polygon and arclength parametrized closed curve constructions of Millson and coauthors~\cite{MR1376245,MR1431002}. We intend to follow up on making this construction effective in computational geometry using the methods presented above. The Douady-Earle surfaces above were an intriguing surprise. What geometric properties do they have? This seems an avenue worth more investigation.

\section*{Acknowledgments}
This work was supported by a postdoc fellowship of the German Academic Exchange Service (DAAD). We are also grateful for the support of the Simons Foundation 
(\#524120 to Cantarella). In addition, many colleagues and friends contributed  insightful discussions about the conformal barycenter, including Kyle Chapman, Philipp Reiter, Erik Schreyer, and Clayton Shonkwiler.

\bibliographystyle{unsrt}
\bibliography{Literature}

\end{document}